\documentclass[12pt, a4paper]{article}
\usepackage{times}
\usepackage{booktabs}
\usepackage{pifont}
\usepackage{floatrow}
\usepackage{caption}
\usepackage[fleqn]{amsmath}
\usepackage{amsfonts,amsthm,amssymb,mathrsfs,bbding}
\usepackage{txfonts}
\usepackage{graphics,multicol}
\usepackage{graphicx}
\usepackage{color}
\usepackage{caption}
\usepackage{cite}
\usepackage{latexsym,bm}
\usepackage{indentfirst}
\usepackage[colorlinks=true,anchorcolor=blue,filecolor=blue,linkcolor=blue,urlcolor=blue,citecolor=blue]{hyperref}
\usepackage{extarrows}
\usepackage{mathtools}
\usepackage{authblk}
\usepackage{enumerate}

\usepackage{xcolor}
\usepackage[normalem]{ulem}

\newtheorem{theorem}{Theorem}[section]

\newtheorem{prob}{Problem}[section]
\newtheorem{claim}{Claim}

\newtheorem{lemma}{Lemma}[section]

\newtheorem{case}{Case}
\newtheorem{subcase}{Case}[case]

\theoremstyle{definition}

\addtocounter{section}{0}

\begin{document}

\title{The exact Tur\'{a}n number of the even wheel $W_{2k+2}$ among non-$3$-partite graphs \footnote{Supported by the National Natural Science Foundation of China {(No. 12371361)} and Distinguished Youth Foundation of Henan Province {(No. 242300421045)}.}}

\author[1]{Qixuan Yuan}
\author[1]{Ruifang Liu\thanks{Corresponding author. E-mail address: rfliu@zzu.edu.cn (R. Liu).}}
\author[2]{Sanming Zhou}
\affil[1]{\small School of Mathematics and Statistics, Zhengzhou University, Zhengzhou, Henan 450001, China}
\affil[2]{\small School of Mathematics and Statistics, The University of Melbourne, Parkville, VIC 3010, Australia}

\date{}

\maketitle
\openup 0.6 \jot 

\begin{abstract}
Let $\mathrm{ex}(n,H)$ denote the Tur\'{a}n number of $H$. A graph is color-critical if there exists an edge $e\in E(H)$ such that $\chi(H-e)<\chi(H)$. For a color-critical graph $H$ with $\chi(H)=r+1$, Simonovits' chromatic critical edge theorem implies that there exists an $n_0(H)$ such that $\mathrm{ex}(n,H)=e(T_{n,r})$ and the Tur\'{a}n graph $T_{n,r}$ is the only extremal graph provided $n\geq n_0(H).$ Let $W_{2k+2}$ be the even wheel obtained by joining a vertex to a cycle of length $2k+1,$ where $k\geq1$ is an integer. Since $W_{2k+2}$ is color-critical and $\chi(W_{2k+2})=4$, $T_{n,3}$ is the unique extremal graph for $W_{2k+2}$-free graphs of sufficiently large $n.$ Note that the extremal graph $T_{n,3}$ is 3-partite. In this paper, we determine the exact Tur\'{a}n number of $W_{2k+2}$ in non-$3$-partite graphs and characterize all extremal graphs provided $n$ is sufficiently large.

\smallskip
\textit{Keywords}: Tur\'{a}n number, Non-3-partite, Even wheel, Extremal graphs

\smallskip
\textit{AMS Classification}: 05C50, 05C35
\end{abstract}

\section{Introduction}

All graphs considered in this paper are finite, undirected and simple. For a graph $G$, let $V(G)$ and $E(G)$ denote its vertex set and edge set, respectively, and call $|V(G)|$ and $e(G)=|E(G)|$ the \emph{order} and \emph{size} of $G$, respectively. For a vertex $v\in V(G)$, let $N_G(v)$ be the set of neighbors of $v$ in $G$, and let $d_G(v)=|N_G(v)|$ be the \emph{degree} of $v$ in $G$. For $u\in V(G)$ and $S\subseteq V(G)$, let $N_G(u,S)$ be the set of neighbors of $u$ in $S$, and set $d_G(u,S) = |N_G(u,S)|$. For two disjoint subsets $A,B\subseteq V(G),$ we use $E_G(A,B)$ to denote the set of edges of $G$ with one end-vertex in $A$ and the other in $B$, and set $e_G(A,B) = |E_G(A,B)|$. For a partition $\{V_1, V_2, V_3\}$ of $V(G)$, set
$$
e_G(V_1,V_2,V_3)=\sum_{1\leq i<j\leq 3}e_G(V_i,V_j).
$$
Denote by $G[S]$ the subgraph of $G$ induced by $S$ and by $G\setminus S$ the subgraph of $G$ induced by $V(G)\setminus S$. We use $G \vee H$ to denote the join of two graphs $G$ and $H$, $\overline{H}$ the complement of $H$, and $\chi(H)$ the chromatic number of $H$. A graph is \emph{$r$-partite} if its vertex set can be partitioned into $r$ nonempty parts (each called a \emph{partite set}) such that no two vertices in the same part are adjacent. A graph is \emph{non-$r$-partite} if such a partition does not exist. The complete graph with order $n$ is denoted by $K_n$ and the cycle with length $n \ge 3$ is denoted by $C_n$.

For a given graph $H$, a graph $G$ is called {\it $H$-free} if it contains no subgraph isomorphic to $H$. The {\it Tur\'{a}n number} $\mathrm{ex}(n, H)$ is the maximum size of an $H$-free graph with order $n$. An $H$-free graph with order $n$ and size $\mathrm{ex}(n,H)$ is called an {\it extremal graph}, and the family of such extremal graphs is denoted by $\mathrm{Ex}(n,H)$. The Tur\'{a}n-type problem relative to a given forbidden graph $H$, which asks for the exact value of $\mathrm{ex}(n,H)$ and the structure of $\mathrm{Ex}(n,H)$, is one of the central problems in extremal graph theory. The {\it Tur\'{a}n graph} $T_{n, r}$ is the unique complete $r$-partite graph with order $n$ such that the sizes of its $r$ partite sets are as equal as possible. The study of Tur\'{a}n-type problems dates back to Mantel's theorem \cite{Mantel1907} which is stated as follows.

\begin{theorem}[\!\cite{Mantel1907}]\label{th1.1}
$\mathrm{ex}(n, C_3)=\big\lfloor\frac{n^{2}}{4}\big\rfloor$ and $\mathrm{Ex}(n, C_3)=\{T_{n, 2}\}$.
\end{theorem}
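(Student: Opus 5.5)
We prove Mantel's theorem (Theorem \ref{th1.1}) in two parts: the upper bound $e(G)\le \lfloor n^2/4\rfloor$ for every $C_3$-free graph $G$ of order $n$, and the fact that equality forces $G\cong T_{n,2}$. The construction is immediate. $T_{n,2}=K_{\lfloor n/2\rfloor,\lceil n/2\rceil}$ is bipartite, so it is $C_3$-free, and it has $\lfloor n/2\rfloor\lceil n/2\rceil=\lfloor n^2/4\rfloor$ edges. This gives $\mathrm{ex}(n,C_3)\ge \lfloor n^2/4\rfloor$.

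For the upper bound we use a maximum-degree argument. Let $G$ be $C_3$-free and choose a vertex $v$ with $d_G(v)=\Delta$. Put $A=N_G(v)$ and $B=V(G)\setminus A$, so $|B|=n-\Delta$. Since $G$ has no triangle, $A$ is an independent set. Every edge of $G$ therefore has at least one end-vertex in $B$, and so
$$e(G)\le \sum_{u\in B} d_G(u)\le |B|\,\Delta=(n-\Delta)\Delta\le \Big\lfloor \frac{n^2}{4}\Big\rfloor,$$
where the last inequality holds because $(n-\Delta)\Delta$ is an integer maximized at $\Delta=\lfloor n/2\rfloor$ or $\lceil n/2\rceil$.

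For uniqueness, suppose $e(G)=\lfloor n^2/4\rfloor$. Then every inequality in the chain above must be an equality, and we read off three consequences.
\begin{enumerate}
\item Equality in the first step means no edge has both ends in $B$, so $B$ is independent.
\item Equality in the second step means every $u\in B$ has degree exactly $\Delta$. Since $B$ is independent, all neighbours of $u$ lie in $A$, and $|A|=\Delta$, so $u$ is adjacent to all of $A$.
\item Equality in the third step forces $\{\Delta, n-\Delta\}=\{\lfloor n/2\rfloor,\lceil n/2\rceil\}$.
\end{enumerate}
Together these show that $G$ is the complete bipartite graph with parts $A$ and $B$, of sizes $\lfloor n/2\rfloor$ and $\lceil n/2\rceil$, that is, $G\cong T_{n,2}$.

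The only step that needs care is this equality analysis. One has to check that equality in the edge count genuinely forces $B$ to be independent and each vertex of $B$ to be complete to $A$, rather than merely bounding degrees. Both follow from tracking exactly where each inequality can be strict, as done above.
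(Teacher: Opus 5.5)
The paper does not prove this statement: Mantel's theorem is quoted only as a classical background result, so there is no in-paper argument to compare against.

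Your maximum-degree argument is correct and complete, and the equality analysis is sound. Because $A=N_G(v)$ is independent, $\sum_{u\in B}d_G(u)-e(G)=e(G[B])$, so equality in the first step is exactly the independence of $B$. Then $N_G(u)\subseteq A$, $|A|=\Delta$ and $d_G(u)=\Delta$ force $N_G(u)=A$ for every $u\in B$. Finally, $(n-\Delta)\Delta=\frac{n^2-(2\Delta-n)^2}{4}$ shows that equality with $\lfloor n^2/4\rfloor$ forces $|2\Delta-n|\le 1$. This handles both parities of $n$ at once, so $G\cong T_{n,2}$.
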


Denote by $\mathrm{ex}_{\chi(H)}(n, H)$ the maximum size of a non-$(\chi(H)-1)$-partite $H$-free graph with order $n$, and by $\mathrm{Ex}_{\chi(H)}(n, H)$ the family of such extremal graphs. Note that $T_{n,2}$ is bipartite. In \cite{E1955}, Erd\H{o}s proved the following non-bipartite version of Mantel's theorem.

\begin{theorem}[\!\cite{E1955}]
$\mathrm{ex}_3(n, C_3)=\big\lfloor\frac{(n-1)^{2}}{4}\big\rfloor+1.$
\end{theorem}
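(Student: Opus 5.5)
The statement is Erdős' theorem: $\mathrm{ex}_3(n,C_3)=\lfloor (n-1)^2/4\rfloor+1$. We prove the lower bound by a construction and the upper bound by induction on $n$.

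\textbf{Lower bound.} Take $T_{n-1,2}$ with parts $A$ and $B$, where $|A|=\lceil (n-1)/2\rceil$ and $|B|=\lfloor (n-1)/2\rfloor$. Subdivide one edge $ab$ (with $a\in A$, $b\in B$) by a new vertex $w$. This deletes one edge and adds two, so the graph has $\lfloor (n-1)^2/4\rfloor+1$ edges. It contains the $5$-cycle $w\,a\,b'\,a'\,b\,w$ for any $a'\in A\setminus\{a\}$, $b'\in B\setminus\{b\}$ (this needs $n\ge 5$), so it is non-bipartite. It is triangle-free: a triangle through $w$ would need $ab$, which was removed, and the rest of the graph is bipartite.

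\textbf{Upper bound.} Let $G$ be a triangle-free non-bipartite graph on $n$ vertices. Let $C$ be a shortest odd cycle in $G$, of length $2\ell+1\ge 5$. Minimality makes $C$ induced. We also claim every vertex $v\notin C$ has at most $2$ neighbours on $C$. Suppose $v$ had three or more. Their positions on $C$ split the cycle into arcs. If two neighbours are at distance $2$ along $C$, then for $\ell\ge 2$ the other arc between them has odd length $2\ell-1$, and together with $v$ it closes an odd cycle of length $2\ell+1$, so this alone is no contradiction. We instead count as follows: with $m\ge 3$ neighbours, the arcs have lengths summing to $2\ell+1$, so some arc of length $t$ is odd. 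Closing that arc through $v$ gives an odd cycle of length $t+2$. This is shorter than $2\ell+1$ unless $t=2\ell-1$. But then the remaining $m-1\ge 2$ arcs have total length $2$, so each has length $1$, which gives adjacent neighbours and hence a triangle. Thus each outside vertex sends at most $2$ edges to $C$.

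Write $H=G-V(C)$ on $n-2\ell-1$ vertices. $H$ is triangle-free, so Theorem~\ref{th1.1} gives $e(H)\le \lfloor (n-2\ell-1)^2/4\rfloor$. Hence
$$e(G)\le \Big\lfloor \tfrac{(n-2\ell-1)^2}{4}\Big\rfloor+2(n-2\ell-1)+2\ell+1.$$
The main obstacle is to check that this is at most $\lfloor (n-1)^2/4\rfloor+1$ for all $\ell\ge 2$ and all $n\ge 2\ell+1$. The right-hand side grows like $(n-1)^2/4$, so the difference is roughly $(2\ell-4)n/4$ up to lower-order terms. It is therefore comfortable when $\ell\ge 3$, and the cases $\ell=2$ and small $n$ need exact checking.

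For $\ell=2$ we have $C=C_5$ and the bound is $\lfloor (n-5)^2/4\rfloor+2n-5$. Expanding, $(n-5)^2/4+2n-5=(n^2-2n+5)/4=(n-1)^2/4+1$, which matches exactly, with parity handled via the floors. For $\ell\ge 3$ the required inequality reduces to a quadratic check in $n$ after using $n\ge 2\ell+1$. When $n=2\ell+1$ we have $e(G)=2\ell+1$, which is below the target. So the upper bound holds, and the formula follows.
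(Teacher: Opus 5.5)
The paper does not prove this statement. It is quoted from Erd\H{o}s's 1955 paper as background, so there is no in-paper argument to compare with.

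Your proof is correct and self-contained:
\begin{itemize}
\item The lower-bound construction, subdividing one edge of $T_{n-1,2}$, is right for $n\ge 5$.
\item A shortest odd cycle $C$ of length $2\ell+1\ge 5$ is induced.
\item Your arc-counting argument correctly shows that every vertex outside $C$ has at most two neighbours on $C$. The opening remark about neighbours at distance $2$ is not needed and can be deleted.
\item Mantel's theorem applied to $G-V(C)$ gives your displayed bound.
\end{itemize}
The paper proves its main theorem with stability plus Simonovits' theorem, which only works for sufficiently large $n$. Your direct argument instead covers every $n\ge 5$, which is what the cited statement requires.

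The one place you leave work undone is the final inequality for $\ell\ge 3$, which you call ``a quadratic check.'' In fact it is an exact identity that handles all $\ell\ge 2$ at once. Put $m=n-2\ell-1\ge 0$. Since $m\ell$ and $\ell^2$ are integers, $\lfloor (m+2\ell)^2/4\rfloor=\lfloor m^2/4\rfloor+m\ell+\ell^2$. Therefore
\[
\Big\lfloor \tfrac{(n-1)^2}{4}\Big\rfloor+1-\Big(\Big\lfloor \tfrac{m^2}{4}\Big\rfloor+2m+2\ell+1\Big)=(\ell-2)(m+\ell)=(\ell-2)(n-\ell-1)\ge 0 .
\]
This identity makes your separate $\ell=2$ expansion and the $n=2\ell+1$ remark unnecessary. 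It also corrects two small slips:
\begin{itemize}
\item The gap grows like $(\ell-2)n$, not $(2\ell-4)n/4$.
\item At $n=2\ell+1$ one has $e(G)=2\ell+1\le \ell^2+1$, with equality when $\ell=2$ (the graph $C_5$). So ``below the target'' should read ``at most the target.''
\end{itemize}
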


In \cite{CJ2002}, Caccetta and Jia extended Erd\H{o}s' theorem by showing that if $G$ is a non-bipartite graph on $n$ vertices containing no odd cycles of length at most $2k+1,$ then $e(G)\leq \big\lfloor\frac{(n-2k+1)^2}{4}\big\rfloor+2k-1.$ Moreover, they characterized all extremal graphs. For longer odd cycles, Bondy \cite{Bondy1971,B1971}, F\"{u}redi and Gunderson \cite{Furedi2015}, and Woodall \cite{Woodall1972} generalized Theorem \ref{th1.1} from $C_{3}$ to $C_{2k+1}$.

\begin{theorem}[\!\cite{Bondy1971,B1971,Furedi2015,Woodall1972}]\label{th1.3}
Let $k\geq2$ and $n\geq4k-2$ be integers. Then
$$\mathrm{ex}(n, C_{2k+1})=\left\lfloor\frac{n^{2}}{4}\right\rfloor~~\mbox{and}~~\mathrm{Ex}(n, C_{2k+1})=\{T_{n,2}\}.$$
\end{theorem}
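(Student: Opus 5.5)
The plan is the classical one for Theorem~\ref{th1.3}: the lower bound comes from $T_{n,2}$, and the upper bound is proved by induction on $n$ with long paths and cycles inside a dense graph. Since $T_{n,2}$ is bipartite, it contains no odd cycle, so $\mathrm{ex}(n,C_{2k+1})\ge \lfloor n^2/4\rfloor$. For the upper bound, let $G$ be a $C_{2k+1}$-free graph on $n\ge 4k-2$ vertices with $e(G)\ge\lfloor n^2/4\rfloor$. I will show $G\cong T_{n,2}$; this gives both the value and the uniqueness.

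\textbf{Step 1 (reduction to the triangle case).} If $G$ is triangle-free, Theorem~\ref{th1.1} gives $e(G)\le\lfloor n^2/4\rfloor$, with equality only for $T_{n,2}$, and we are done. So assume $G$ contains a triangle $xyz$.

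\textbf{Step 2 (induction when some vertex has small degree).} If some vertex $v$ has $d_G(v)\le \lfloor n/2\rfloor-1$, then
\[
e(G-v)\ge \lfloor n^2/4\rfloor-\lfloor n/2\rfloor+1>\lfloor (n-1)^2/4\rfloor .
\]
For $n-1\ge 4k-2$, induction then gives a $C_{2k+1}$ in $G-v$, a contradiction. The base range near $n=4k-2$ is handled directly. Hence we may assume $\delta(G)\ge\lfloor n/2\rfloor$.

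\textbf{Step 3 (building an odd cycle of length $2k+1$).} With minimum degree about $n/2$, I will grow, starting from the triangle, a chain of odd cycles of lengths $3,5,\dots,2k+1$. The concrete approach is the Bondy-type argument.
\begin{enumerate}
\item Find a cycle $C$ of length $\ell$ with $2k+1<\ell$ (Dirac-type, since $\delta\ge n/2$; indeed $G$ is Hamiltonian when $\delta\ge n/2$).
\item Use the triangle through the chord structure to obtain a chord configuration producing cycles of all lengths. The tool is Bondy's pancyclicity theorem: a Hamiltonian graph with $e\ge n^2/4$ is pancyclic or equals $K_{n/2,n/2}$.
\end{enumerate}
Since $\delta\ge\lfloor n/2\rfloor$ implies Hamiltonicity (Dirac for even $n$; for odd $n$ use Ore/Bondy after a small adjustment), Bondy's theorem applies. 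The case $G=K_{n/2,n/2}$ is excluded because $G$ has a triangle. So $G$ is pancyclic and contains $C_{2k+1}$, a contradiction.

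\textbf{Main obstacle.} The delicate points are the low-$n$ end of the induction and odd $n$, where $\delta\ge\lfloor n/2\rfloor=(n-1)/2$ does not immediately give Hamiltonicity. There I will first try the following: take a longest path, and use the degree sum $\ge n-1$ together with the edge count to close it into a Hamiltonian cycle, or else find an $n-1$ cycle and apply Bondy's theorem to a subgraph. This is where the bound $n\ge 4k-2$ is needed to keep the counting valid, and I expect it to require the most care.
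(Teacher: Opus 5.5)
The paper gives no proof of this theorem; it only quotes it. So your proposal has to stand on its own, and it has a genuine gap exactly where you suspected one. Part of that gap cannot be closed at all, because the uniqueness claim as printed is false for $n\in\{4k-2,4k-1\}$:
\begin{itemize}
\item Gluing $K_{2k}$ and $K_{2k-1}$ at one vertex gives a graph on $4k-2$ vertices with $\binom{2k}{2}+\binom{2k-1}{2}=(2k-1)^2=\lfloor (4k-2)^2/4\rfloor$ edges.
\item Gluing two copies of $K_{2k}$ at a vertex gives a graph on $4k-1$ vertices with $2k(2k-1)=\lfloor (4k-1)^2/4\rfloor$ edges.
\end{itemize}
Every cycle lies in a block with at most $2k$ vertices, so both graphs are $C_{2k+1}$-free, and neither is $T_{n,2}$. (In the literature, uniqueness is claimed only for $n\ge 4k$.)

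The same examples show that ``the base range near $n=4k-2$ is handled directly'' hides the real content. At $n=4k-2$ your Step~2 would need $\mathrm{ex}(4k-3,C_{2k+1})=\lfloor (4k-3)^2/4\rfloor$. But $K_{2k}$ glued to $K_{2k-2}$ is $C_{2k+1}$-free with $4k^2-6k+3>\lfloor (4k-3)^2/4\rfloor$ edges. So the induction cannot be anchored from below, and the bound $\mathrm{ex}(4k-2,C_{2k+1})\le (2k-1)^2$ needs a genuinely different argument, which you do not supply.

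For odd $n$, Step~3 fails outright, not just technically:
\begin{itemize}
\item Two copies of $K_{(n+1)/2}$ sharing a vertex have $\delta=\lfloor n/2\rfloor$, exactly $\lfloor n^2/4\rfloor$ edges, and many triangles, yet they have a cut vertex. So no longest-path closing argument can produce a Hamiltonian cycle.
\item Even for Hamiltonian $G$, Bondy's pancyclicity theorem needs $e(G)\ge n^2/4$. For odd $n$ this means at least $\lfloor n^2/4\rfloor+1$ edges, so it never reaches the equality case on which uniqueness depends.
\end{itemize}

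Your skeleton does give two things, once the base case $n=4k-2$ is established:
\begin{itemize}
\item The value for all $n\ge 4k-1$. Assume $e(G)\ge\lfloor n^2/4\rfloor+1$. A vertex of degree at most $\lfloor n/2\rfloor$ would leave more than $\lfloor (n-1)^2/4\rfloor$ edges after deletion, contradicting the case $n-1$. Hence $\delta>n/2$, and Dirac plus Bondy apply for both parities.
\item Uniqueness for even $n\ge 4k$.
\end{itemize}
For odd $n\ge 4k+1$ the equality case still has to be done separately. After deletion you have $\delta\ge (n-1)/2$, and then a cut vertex forces the two glued copies of $K_{(n+1)/2}$, which contain $C_{2k+1}$ exactly when $n\ge 4k+1$. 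The 2-connected case, Hamiltonian or not, needs an argument beyond Bondy's theorem.

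A standard way around all of this is to induct on the exact value for every order, $\max\{\lfloor n^2/4\rfloor,\binom{2k}{2}+\binom{n-2k+1}{2}\}$. One then splits along blocks, and in the 2-connected case uses a long odd cycle (e.g.\ the Voss--Zuluaga theorem) together with a chord-counting lemma.
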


Ren, Wang, Wang and Yang \cite{Ren2024} obtained the following non-bipartite refinement of Theorem \ref{th1.3}. Denote by $T_{n-2,2}\bullet C_3$ the graph obtained by identifying a vertex of $T_{n-2,2}$ with a vertex of $C_3$.

\begin{theorem}[\!\cite{Ren2024}]
Let $k\geq2$ and $n\geq318k$ be integers. Then
$$\mathrm{ex}_3(n, C_{2k+1})=\left\lfloor\frac{(n-2)^{2}}{4}\right\rfloor+3~~\mbox{and}~~\mathrm{Ex}_3(n, C_{2k+1})=\{T_{n-2,2}\bullet C_3\}.$$
\end{theorem}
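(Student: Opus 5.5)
The plan has two parts. First I check that $T_{n-2,2}\bullet C_3$ meets the bound. Then I prove the matching upper bound with a stability argument followed by a cleaning argument.

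\textbf{Lower bound.} The graph $T_{n-2,2}\bullet C_3$ has $n$ vertices and $\lfloor (n-2)^2/4\rfloor+3$ edges. It contains a triangle, so it is non-bipartite. Its only odd cycle is the pendant triangle: any cycle through the cut vertex stays inside one block, and the $T_{n-2,2}$ block is bipartite. Hence for $k\ge 2$ the graph is $C_{2k+1}$-free.

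\textbf{Upper bound: stability step.} Let $G$ be a non-bipartite $C_{2k+1}$-free graph on $n\ge 318k$ vertices with $e(G)\ge \lfloor (n-2)^2/4\rfloor+3$. By the Erd\H{o}s--Simonovits stability theorem, $G$ is $o(n^2)$-close to $T_{n,2}$. Choose a max-cut partition $V_1\cup V_2$; then $e(G[V_1])+e(G[V_2])=o(n^2)$ and $|V_i|=n/2+o(n)$.

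\textbf{Cleaning.} Call a vertex bad if it has at least $\epsilon n$ neighbours in its own part. By the max-cut property, a bad vertex also has at least $\epsilon n$ neighbours in the other part.
\begin{itemize}
\item Among good vertices, $G[V_1\setminus B]$ and $G[V_2\setminus B]$ are nearly complete bipartite, with minimum cross-degree $n/2-O(\epsilon n)$. Any edge inside a part between two good vertices $u,v$ can therefore be closed into an odd cycle of every length up to about $n$: walk an even path of length $2k-1$ from $u$ to $v$ alternating between the parts. This produces a $C_{2k+1}$, which is forbidden. Likewise, a bad vertex has $\epsilon n$ neighbours on both sides, which again yields a $C_{2k+1}$. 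Hence there are no bad vertices, and every edge inside a part is incident to a vertex of small degree.
\item Next, delete the vertices of low degree (say below $n/4$) one by one. The standard counting shows that only $O(1)$ such deletions keep the edge count above the Tur\'an-type threshold.
\item The remaining graph $G'$ has high minimum degree and is $C_{2k+1}$-free. By the path-closing argument above it is bipartite.
\end{itemize}

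\textbf{Exact count.} Since $G$ is non-bipartite, some odd cycle must use one of the deleted low-degree vertices. Let $S$ be the set of deleted vertices, with $|S|=s\ge 1$. The odd cycle meets $S$, and the goal is to bound $e(G)\le e(G')+(\text{edges incident to }S)$.
\begin{itemize}
\item A vertex $x\in S$ adjacent to vertices on both sides of $G'$ creates $C_{2k+1}$ via long paths in $G'$. So each $x\in S$ has neighbours essentially on one side only, up to a short exceptional structure.
\item An odd cycle through $S$ then forces either two adjacent vertices of $S$ both attached to the same side, or a vertex attached with parity conflict.
\item Optimizing, the best configuration is two extra vertices $x,y$ forming a triangle with a single vertex $w$ of $G'$. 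This gives $\lfloor (n-2)^2/4\rfloor+3$ edges.
\item In any alternative, for example $x$ having degree larger than $2$, there are two neighbours $a,b$ in $G'$. Adding $x$ and closing the triangle/odd cycle through $G'$-paths of suitable parity produces a $C_{2k+1}$.
\end{itemize}

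\textbf{Main obstacle.} I expect the hardest step to be this final exact analysis. One has to show that every deviation, such as larger $s$, extra edges between $S$ and $G'$, or a longer odd cycle instead of a triangle, loses at least one edge compared with $T_{n-2,2}\bullet C_3$. This requires carefully using that $G'$ is nearly complete bipartite, so that paths of all needed lengths and parities exist between its vertices. The constant $318k$ is what guarantees enough room for these path embeddings.
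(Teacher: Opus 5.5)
The paper does not prove this theorem; it quotes it from \cite{Ren2024}. So I am judging your outline on its own terms. Your lower bound is correct: the triangle is the only odd cycle of $T_{n-2,2}\bullet C_3$.

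The upper bound has a genuine gap. The exact analysis after cleaning is where the whole content of the theorem lies, and you assert it rather than prove it. The mechanism you sketch also fails as stated, in three places.

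First, counting does not reduce $|S|$ to $2$ with your threshold. If three vertices are deleted at degree below $n/4$, you only get $e(G)<\lfloor (n-3)^2/4\rfloor+3n/4$. For large $n$ this exceeds $\lfloor (n-2)^2/4\rfloor+3$ by roughly $n/4$. So you need a real structural analysis of how the vertices of $S$ attach to $G'$ and to one another: one side each, with parity constraints along paths in $G[S]$. You do not carry this out.

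Second, the claim that every deviation ``produces a $C_{2k+1}$'' is false for $k=2$. Take $S=\{x,y\}$ with $xy\in E(G)$, $N_{G'}(x)=\{a,w\}$ and $N_{G'}(y)=\{w\}$, where $a$ and $w$ lie on the same side of $G'$. The odd cycles through $S$ are the triangle $xyw$ and the cycles $x\,a\cdots w\,y\,x$ of length $\ell+3$, where $\ell\ge 2$ is the (even) length of an $a$--$w$ path in $G'$. A $C_5$ therefore needs a common neighbour of $a$ and $w$ in $G'$, and minimum degree $n/4$ does not provide one. This configuration is excluded only by counting: $N_{G'}(a)\cap N_{G'}(w)=\emptyset$ costs $G'$ at least a full side's worth of edges. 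So $k=2$ needs its own edge-counting argument, not path closing.

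Third, you never derive the equality case, namely $G'\cong T_{n-2,2}$ and $N_{G'}(x)=N_{G'}(y)=\{w\}$. This is needed for $\mathrm{Ex}_3(n,C_{2k+1})=\{T_{n-2,2}\bullet C_3\}$.

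Your argument also does not reach the stated range. It rests on the Erd\H{o}s--Simonovits stability theorem with an $o(n^2)$ error, and on choosing $\epsilon$ small in terms of $k$ for the path embeddings. At best this proves the statement for $n\ge n_0(k)$, with $n_0(k)$ unspecified. That is exactly the kind of ``$n$ sufficiently large'' conclusion this paper obtains for $W_{2k+2}$ by the same template. Nothing in your outline gives $n\ge 318k$. Your remark that $318k$ ``guarantees enough room for the path embeddings'' is unsupported; a linear threshold needs a quantitative stability step with explicit dependence on $k$.

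Two smaller points. Vertices with few neighbours in their own part need not have cross-degree $n/2-O(\epsilon n)$, so you must remove low-degree vertices before asserting that. Also, an alternating path between two vertices in the same part has even length. To close an edge inside a part into a $C_{2k+1}$ you need a path of length $2k$, not $2k-1$.
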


In 1941, Tur\'{a}n \cite{Turan1941} generalized Mantel's theorem from triangles to complete graphs.

\begin{theorem}[\!\cite{Turan1941}]
Let $n \ge r \ge 1$ be integers. Then
$$
\mathrm{ex}(n, K_{r+1})=e(T_{n, r})\ \text{ and }\ \mathrm{Ex}(n, K_{r+1})=\{T_{n, r}\}.
$$
\end{theorem}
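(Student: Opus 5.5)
We prove Tur\'an's theorem by induction on $n$, in a form that yields both the bound $e(G)\le e(T_{n,r})$ and the uniqueness of the extremal graph. For $n\le r$ the statement is immediate: $K_{r+1}$ has more than $n$ vertices, so $K_n=T_{n,r}$ is trivially $K_{r+1}$-free and is the unique graph with the maximum number of edges. Assume $n>r$ and let $G$ be a $K_{r+1}$-free graph on $n$ vertices with $e(G)\ge e(T_{n,r})$. Since $G$ is $K_{r+1}$-free, $G$ contains no $K_{r+1}$, and we may also assume $G$ contains a copy of $K_r$. Indeed, if $G$ were $K_r$-free, then applying the result inductively in $r$ would give $e(G)\le e(T_{n,r-1})<e(T_{n,r})$ for $n\ge r$, a contradiction. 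So $G$ contains a copy $S$ of $K_r$.

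For the removal step, each vertex $v\notin S$ is adjacent to at most $r-1$ vertices of $S$, since otherwise $S\cup\{v\}$ would span a $K_{r+1}$. Hence
$$e(G)\le \binom{r}{2}+(n-r)(r-1)+e(G\setminus S).$$
The graph $G\setminus S$ is $K_{r+1}$-free on $n-r$ vertices, so by induction $e(G\setminus S)\le e(T_{n-r,r})$. A routine computation shows
$$e(T_{n,r})=e(T_{n-r,r})+\binom{r}{2}+(n-r)(r-1),$$
because adding one vertex to each part of $T_{n-r,r}$ produces $T_{n,r}$. Therefore $e(G)\le e(T_{n,r})$, which proves the bound.

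For the equality case, suppose $e(G)=e(T_{n,r})$. Then every inequality above must be tight. In particular $G\setminus S\cong T_{n-r,r}$ by the inductive uniqueness, and every vertex outside $S$ has exactly $r-1$ neighbours in $S$. We now rebuild the partition of $G$. Each vertex $v\notin S$ misses exactly one vertex $s(v)\in S$; place $v$ into the class of $s(v)$. We must show that these classes are independent sets and that all edges between different classes are present. This is the main obstacle, and it is handled by the following observations.
\begin{enumerate}
\item[(a)] Two adjacent vertices $u,v\notin S$ cannot satisfy $s(u)=s(v)$. Otherwise $u$, $v$ and $S\setminus\{s(u)\}$ would form a $K_{r+1}$.
\item[(b)] Consequently each class is an independent set.
\item[(c)] Counting edges: $G$ is $r$-partite with the given class sizes and has $e(T_{n,r})$ edges. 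A complete $r$-partite graph with these class sizes is maximized only by balanced parts, and any missing cross edge would reduce the count below $e(T_{n,r})$.
\end{enumerate}
Hence $G\cong T_{n,r}$.

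Alternatively, for uniqueness one may use Zykov symmetrization. In a maximum $K_{r+1}$-free graph, non-adjacency is an equivalence relation, since if it failed one could replace a vertex by a copy of a higher-degree vertex and increase the edge count. This forces $G$ to be complete multipartite, and balancing the parts then gives $T_{n,r}$.
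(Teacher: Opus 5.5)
The paper gives no proof of this statement. It quotes Tur\'an's theorem from the 1941 paper as background only, so there is no in-paper argument to compare with.

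On its own terms, your proof is correct. It is the standard clique-removal induction, essentially Tur\'an's original argument. The one difference is how you obtain the copy of $K_r$: you use an auxiliary induction on $r$, relying on $e(T_{n,r-1})<e(T_{n,r})$ for $n\ge r$, which follows by splitting a part of size at least $2$. The usual route is instead the edge-maximality observation: in an extremal graph, adding any non-edge $xy$ creates a $K_{r+1}$, and deleting $y$ from it leaves a $K_r$ in $G$. Your route has the small advantage of applying to every $K_{r+1}$-free $G$ with $e(G)\ge e(T_{n,r})$, not only to extremal ones. If you keep it, make the double induction explicit: outer on $r$, inner on $n$.

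Your equality analysis (a)--(c) is sound, and it never actually uses $G\setminus S\cong T_{n-r,r}$. It needs only two facts: every vertex outside $S$ has exactly $r-1$ neighbours in $S$, and among complete $r$-partite graphs on $n$ vertices only balanced part sizes attain $e(T_{n,r})$.

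Two small points to tidy.
\begin{enumerate}
\item The lower bound $\mathrm{ex}(n,K_{r+1})\ge e(T_{n,r})$ needs the trivial pigeonhole remark that $T_{n,r}$ is $K_{r+1}$-free.
\item The Zykov alternative is incomplete as written. Suppose $u\not\sim v$, $v\not\sim w$, $u\sim w$, and $v$ has the largest degree of the three. Then cloning a higher-degree vertex in place of $v$ is impossible. You must instead replace \emph{both} $u$ and $w$ by clones of $v$, which gains $2d(v)-d(u)-d(w)+1\ge 1$ edges and creates no $K_{r+1}$.
\end{enumerate}
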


Since $T_{n,r}$ is $r$-partite, the maximum size of a non-$r$-partite $K_{r+1}$-free graph is not given by $e(T_{n, r})$, and a natural problem is to determine the maximum size of a non-$r$-partite $K_{r+1}$-free graph. In this direction, Brouwer \cite{B1981} proved the following refinement of Tur\'an's theorem.

\begin{theorem}[\!\cite{B1981}]\label{th1.6}
Let $n\geq2r+1$ be an integer. Then
$$\mathrm{ex}_{r+1}(n, K_{r+1})=e(T_{n,r})-\left\lfloor\frac{n}{r}\right\rfloor+1.$$
\end{theorem}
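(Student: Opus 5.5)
We need to prove Brouwer's theorem: for $n\ge 2r+1$, $\mathrm{ex}_{r+1}(n,K_{r+1}) = e(T_{n,r})-\lfloor n/r\rfloor+1$. The proof has two parts: a construction giving the lower bound, and an induction on $n$ giving the upper bound.

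\textbf{Lower bound.} Let $V_1,\dots,V_r$ be the parts of $T_{n,r}$, with $V_1$ a smallest part, so $|V_1|=\lfloor n/r\rfloor$. Choose $u\in V_1$ and adjacent vertices $v_2\in V_2$, $v_3\in V_3$. Delete all edges from $u$ to $V_2\setminus\{v_2\}$, delete the edge $v_2v_3$, and add the edge $uw$ for some $w\in V_1\setminus\{u\}$. Then:
\begin{itemize}
\item The net loss in edges is $(|V_2|-1)+1-1=|V_2|-1$.
\item Any triangle-type obstruction to $K_{r+1}$ is destroyed.
\item The graph is not $r$-partite.
\end{itemize}
A cleaner standard choice achieves loss exactly $\lfloor n/r\rfloor-1$, and I would tune the construction so that the deleted star comes from a part of size $\lfloor n/r\rfloor$. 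To check $K_{r+1}$-freeness, note that any $K_{r+1}$ must use the new edge $uw$ inside $V_1$. It then needs vertices in each of the other $r-1$ parts, all adjacent to both $u$ and $w$. The surgery at $u$ is designed to block exactly this.

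\textbf{Upper bound.} Induct on $n$ with $r$ fixed, handling the base cases near $n=2r+1$ directly. Let $G$ be non-$r$-partite, $K_{r+1}$-free, of order $n$, and suppose $e(G)\ge e(T_{n,r})-\lfloor n/r\rfloor+2$.
\begin{enumerate}
\item Let $\delta$ be the minimum degree of $G$. If some vertex $v$ has $d(v)\le \delta(T_{n,r})-1$ (or a suitably adjusted threshold), and $G-v$ is still non-$r$-partite, then deleting $v$ and comparing
$$e(T_{n,r})-e(T_{n-1,r})=n-\lceil n/r\rceil$$
with the change in $\lfloor n/r\rfloor$ lets the induction hypothesis give a contradiction.
\item If instead $G-v$ is $r$-partite with parts $U_1,\dots,U_r$, then $v$ has neighbours in every part, since otherwise $G$ itself would be $r$-partite. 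Because $G$ is $K_{r+1}$-free, the set $N(v)$ contains no $r$-clique meeting all parts. Counting the edges missing among the $N(v)\cap U_i$ then bounds $e(G)$ directly.
\item If every vertex has large degree, so $\delta(G)> n-\lceil n/r\rceil-\lfloor n/r\rfloor+1$ roughly, use the Andr\'asfai--Erd\H{o}s--S\'os type argument. Build a maximal clique $K_r$ greedily from common neighbourhoods, which exists by the degree condition. Each vertex outside it is adjacent to at most $r-1$ clique vertices, which partitions $V(G)$ into $r$ classes. High minimum degree makes each class independent, so $G$ is $r$-partite, a contradiction.
\end{enumerate}

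\textbf{Main obstacle.} The delicate point is the arithmetic across residues of $n \bmod r$. This arises in step 1, where $\lfloor n/r\rfloor$ jumps when $r\mid n$, and in matching the thresholds of steps 1 and 3 so that no degree range is left uncovered. If the thresholds fail to meet, I would instead delete a minimum-degree vertex and apply the $r$-partite case analysis of step 2 directly. That route gives $e(G)\le e(T_{n-1,r})+d(v)-(\text{missing edges})$, and I would optimise over $d(v)$ to land exactly on $e(T_{n,r})-\lfloor n/r\rfloor+1$.
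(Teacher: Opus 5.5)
The paper only quotes this theorem from Brouwer and gives no proof, so your proposal has to stand on its own. As written, it does not.

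\textbf{The lower bound fails.} Your construction contains $K_{r+1}$. You delete $v_2v_3$, but the edge that has to go is $wv_2$. After your surgery $u,w,v_2$ is still a triangle. Together with any $x_3\in V_3\setminus\{v_3\}$ and one vertex from each of $V_4,\dots,V_r$, it spans a $K_{r+1}$. For $r=2$ the triangle itself is the forbidden graph, and $V_3$ does not exist. So the claim that ``the surgery at $u$ blocks exactly this'' is false.

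A working version is as follows. Put $u,w$ in a part $V_1$ with $|V_1|\ge 3$, and let $V_2$ be a different part with $|V_2|=\lfloor n/r\rfloor$. Delete the edges from $u$ to $V_2\setminus\{v_2\}$ and the edge $wv_2$, and add $uw$. Then $G[V_1\cup V_2]$ is triangle-free and completely joined to an $(r-2)$-partite remainder, so there is no $K_{r+1}$. The loss is $|V_2|-1=\lfloor n/r\rfloor-1$. Finally, $u\,w\,y\,x\,v_2\,u$ with $y\in V_2\setminus\{v_2\}$ and $x\in V_1\setminus\{u,w\}$ is a $5$-cycle, so the graph is not $r$-partite.

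Placing $u,w$ in a smallest part, as you do, breaks when $\lfloor n/r\rfloor=2$: then $G[V_1\cup V_2]$ is a tree and the graph is $r$-partite. The hypothesis $n\ge 2r+1$ is exactly what supplies a part of size at least $3$ together with another part of size $\lfloor n/r\rfloor\ge 2$. Your construction never uses it.

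\textbf{The upper bound.} Your three-way split is viable, but the thresholds you wrote do not meet, and the step with the real content is left blank.

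Deleting $v$ and inducting works precisely when $d(v)\le t(n)$, where $t(n)=n-\lceil n/r\rceil$ if $r\nmid n$ and $t(n)=n-n/r-1$ if $r\mid n$. The complementary case $\delta(G)\ge t(n)+1$ is not an Andr\'asfai--Erd\H{o}s--S\'os situation at $\delta\approx(1-2/r)n$, as you state. At that degree your clique-partition argument fails, since it needs $\sum_i d(x_i)\ge(r-1)n$; for $r=2$ your condition is just $\delta\ge 2$. The case is instead settled by Tur\'an's theorem alone:
\begin{itemize}
\item if $r\nmid n$, then $2e(G)\ge n(n-\lfloor n/r\rfloor)>2e(T_{n,r})$;
\item if $r\mid n$, then $e(G)\ge e(T_{n,r})$ forces $G\cong T_{n,r}$.
\end{itemize}
Tur\'an uniqueness also gives the bound $e(T_{2r,r})-1$ for non-$r$-partite $K_{r+1}$-free graphs of order $2r$. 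So the induction needs no separate base case, and there is no residue obstacle.

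What you do need is step 2, which you only gesture at. Let $a_i=|N(v)\cap U_i|\ge 1$, ordered $a_1\le\dots\le a_r$. Every transversal of the sets $N(v)\cap U_i$ contains a non-edge, and counting transversals gives at least $\min_{i<j}a_ia_j=a_1a_2$ missing cross pairs inside $N(v)$. Putting $v$ into $U_1$ then yields $e(G)\le f(s)-\sum_{j\ge2}(|U_j|-a_j)+a_1-a_1a_2\le f(s)+1-\min_i s_i$. Here $s=(|U_1|+1,|U_2|,\dots,|U_r|)$ and $f(s)$ is the size of the complete $r$-partite graph with these part sizes. Finally, $f(s)-\min_i s_i\le e(T_{n,r})-\lfloor n/r\rfloor$. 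To see this, fix the minimum $m$ and balance the other parts; the resulting quantity is nondecreasing in $m$ for $m\le\lfloor n/r\rfloor$.

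Your fallback of ``optimising over $d(v)$'' after deleting a minimum-degree vertex only applies when $G-v$ is $r$-partite, so it cannot substitute for this.
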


Theorem \ref{th1.6} was also independently studied in several other references, see \cite{Amin2013,KP,TU}.

A graph is {\it color-critical} if it contains an edge whose removal reduces its chromatic number. Simonovits \cite{S1968,S1974} established the following fundamental result, which is known as Simonovits' chromatic critical edge theorem.

\begin{theorem}
[\!\cite{S1968,S1974}]\label{th1.7}
If $H$ is a color-critical graph with $\chi(H)=r+1,$ then there exists a positive integer $n_0(H)$ such that for any $n\geq n_0(H)$ we have $\mathrm{ex}(n, H)=e(T_{n,r})$ and $\mathrm{Ex}(n, H)=\{T_{n,r}\}$.
\end{theorem}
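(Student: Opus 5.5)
The statement is Simonovits' theorem (Theorem \ref{th1.7}). I would prove it by the stability method: first show that an extremal graph is close to $T_{n,r}$, then clean it up to get exactness. Throughout, let $h=|V(H)|$, let $G$ be an $H$-free graph on $n$ vertices with $e(G)\ge e(T_{n,r})$, and aim to show $G=T_{n,r}$.

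\textbf{Stability step.} Since $\chi(H)=r+1$, the Erd\H{o}s--Stone theorem gives $e(G)\le e(T_{n,r})+o(n^2)$. The Erd\H{o}s--Simonovits stability theorem then gives a partition $V(G)=V_1\cup\dots\cup V_r$ that minimizes the number of edges inside parts and has at most $\varepsilon n^2$ such edges. Because $T_{n,r}$ is extremal among $r$-partite graphs, we also get $|V_i|=n/r+o(n)$ and at most $\varepsilon n^2$ missing cross edges.

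\textbf{Cleaning step.} Call a vertex \emph{bad} if it has at least $\delta n$ non-neighbours in some other part, or at least $\delta n$ neighbours in its own part. By the edge counts there are only $O(\varepsilon n/\delta)$ bad vertices. I first run a deletion argument: repeatedly remove a vertex of degree less than $(1-1/r)n-\delta' n$. Each removal raises $e(G)-e(T_{n',r})$ by a linear amount, and that excess can never exceed $o(n'^2)$, so only few removals are possible. We may therefore assume $\delta(G)\ge(1-1/r-\delta')n$.

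\textbf{Key step.} Suppose some part contains an edge $xy$. Because $H$ is colour-critical, $H$ is contained in $K_r(t,\dots,t)$ plus one edge inside the first class, for $t=h$. By minimality of the partition, $x$ has at least as many neighbours in each other part as in its own. Together with the minimum-degree bound this shows $x$ has linearly many neighbours in every $V_j$ with $j\ne i$, and the same holds for $y$. We then build a copy of $K_r(t,\dots,t)$ greedily among good vertices in the common neighbourhood of $x$ and $y$ in each other part, placing $x$ and $y$ in the first class. Common neighbourhoods among good vertices shrink by at most $r\delta n$ per chosen vertex, so the greedy choice succeeds. This yields a copy of $H$, a contradiction, so $G$ is $r$-partite.

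\textbf{Finishing.} An $r$-partite graph on $n$ vertices has at most $e(T_{n,r})$ edges, with equality only for $T_{n,r}$, so $G=T_{n,r}$. Finally, the vertex deletions are undone by the usual counting: if any vertex was deleted, the deleted graph has more edges than its Tur\'an graph, contradicting the structure just proved at a smaller order.

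I expect the main obstacle to be the key step, specifically guaranteeing that $x$ and $y$ each have linearly many good common neighbours in every other part. The minimum-degree reduction is what secures this, and the constants $\varepsilon\ll\delta\ll\delta'\ll 1/h$ must be chosen carefully.
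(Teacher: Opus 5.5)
Your skeleton is the standard stability proof: Erd\H{o}s--Simonovits stability, deletion down to high minimum degree, a max-cut partition, and exactness via colour-criticality. (The paper quotes this theorem from Simonovits without proving it.) Your stability, deletion and finishing steps are sound. The gap is in your key step, exactly where you flagged the obstacle: the minimum-degree reduction does \emph{not} secure large common neighbourhoods of $x$ and $y$.

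Take $x\in V_i$ and let $V_j$ ($j\ne i$) be the other part where $x$ has fewest neighbours. Max-cut and $\delta(G)\ge(1-1/r-\delta')n$ only give $2d(x,V_j)\ge d(x,V_i)+d(x,V_j)\ge d(x)-\sum_{l\ne i,j}|V_l|\ge(\tfrac1r-\delta'-o(1))n$. So $x$ has only about $|V_j|/2$ neighbours in $V_j$, and two such sets can be disjoint. For instance, let $x\in V_i$ have about $n/(2r)$ neighbours in $V_i$ and in $V_j$, and be complete to the remaining parts. Then $x$ satisfies both the max-cut condition and the degree bound. Its neighbour $y\in V_i$ can have the same profile with $N(y)\cap V_j$ disjoint from $N(x)\cap V_j$. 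Then $N(x)\cap N(y)\cap V_j=\emptyset$, and the greedy embedding of $K_r(h,\dots,h)$ plus the edge $xy$ cannot start. Your bad-vertex count does not help: it shows bad vertices are few, not that the endpoints $x,y$ of an edge inside a part are good. Your shrinkage estimate only controls vertices chosen among good ones.

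The missing ingredient is a separate step, proved before the key step: for some $\gamma$ with $\varepsilon\ll\gamma\ll 1/(rh)$, no vertex has $\gamma n$ neighbours in its own part.
\begin{itemize}
\item Suppose $d(x,V_i)\ge\gamma n$. Max-cut gives $d(x,V_l)\ge\gamma n$ for every $l$.
\item Only $\varepsilon n^2$ cross pairs are missing, with $\varepsilon\ll\gamma^2$. So a greedy argument finds $K_r(h,\dots,h)$ inside $N(x)$, one class in each $V_l$.
\item $H-u$ is $r$-colourable for an endpoint $u$ of the critical edge, so $H\subseteq K_1\vee K_r(h,\dots,h)$, a contradiction.
\end{itemize}
This rules out the configuration above. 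It is the analogue of Lemma~\ref{lem3.2}, which the paper proves before Lemma~\ref{lem3.4} in its wheel setting.

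Once $d(x,V_i)<\gamma n$ for all $x$, the degree bound upgrades to $d(x,V_j)\ge|V_j|-(\gamma+\delta'+o(1))n$ for every $j\ne i$. Hence every vertex is good and $|N(x)\cap N(y)\cap V_j|\ge|V_j|-2(\gamma+\delta'+o(1))n$, so your greedy embedding then goes through.

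A minor point: the stability partition and the max-cut choice should be made for the graph remaining after the deletions, not for the original $G$.
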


Since the unique extremal graph $T_{n,r}$ in Theorem \ref{th1.7} is $r$-partite, it is natural to study the non-$r$-partite version of the Tur\'{a}n problem: determine the maximum size $\mathrm{ex}_{r+1}(n, H)$ of a non-$r$-partite $H$-free graph together with the family $\mathrm{Ex}_{r+1}(n, H)$ of extremal graphs. A generalized {\it theta graph} $\theta(l_1,l_2,\ldots,l_t)$ is obtained from internally disjoint paths of lengths $l_1,l_2,\ldots,l_t,$ respectively, by sharing a common pair of endpoints. In 2021, Zhai, Fang and Shu \cite{Zhai2021} showed that $\theta(p,q,s)$ is a color-critical graph with chromatic number three for any $p,q,s$ which are not all of the same parity. Moreover, they determined $\mathrm{Ex}(n,\theta(p,q,s))=\{T_{n,2}\}$ for $n\geq 9(p+q+s-1)^2-3(p+q+s-1).$ For fixed integers $q,s\geq 2$ with even $q$, $\theta(1,q,s)$ is a color-critical graph with chromatic number three. Fang and Lin \cite{Fang2025} determined all the graphs in $\mathrm{Ex}_3(n,\theta(1,q,s))$ for sufficiently large $n.$ A {\it book graph} $B_{r+1}$ consists of $r+1$ triangles sharing a common edge, where $r\geq0$. Clearly, $B_{r+1}$ is color-critical and $\chi(B_{r+1})=3$. By Theorem \ref{th1.7}, $T_{n,2}$ is the unique extremal graph for $B_{r+1}$-free graphs of sufficiently large order $n$. Moreover, Edwards \cite{E1977} and independently Khad\v{z}iivanov and Nikiforov \cite{KN} confirmed Erd\H{o}s' booksize conjecture and showed that $\mathrm{ex}(n,B_{r+1})=e(T_{n,2})$ for $n\geq6r$. Since the extremal graph $T_{n,2}$ is bipartite, the non-bipartite version of this problem is again natural. For $r=0$, the graph $B_{r+1}$ is a triangle, and the problem reduces to the theorem of Erd\H{o}s mentioned above. For $r\geq1$ and sufficiently large $n,$ Miao, Liu and van Dam \cite{Miao2026} recently proved that $\mathrm{ex}_3(n,B_{r+1})= \big\lfloor\frac{(n-1)^2}{4}\big\rfloor+2r$ and characterized all extremal graphs. For $r\geq 3$ and $k\geq 1,$ {\it the generalized book graph} $B_{r,k}$ is defined as the join $K_r \vee \overline{K}_k$ of $K_r$ and the complement $\overline{K}_k$ of $K_k$. Note that $B_{r,k}$ is a color-critical graph with chromatic number $r+1.$ Yu and Li \cite{Yu2025} determined $\mathrm{ex}_{r+1}(n,B_{r,k})$ and characterized all extremal graphs for sufficiently large $n.$

The \emph{wheel} $W_t$ with order $t \ge 4$ is the graph obtained from the cycle $C_{t-1}$ by adding a new vertex and joining it to every vertex on this cycle; the new vertex becomes the center vertex and the cycle $C_{t-1}$ is the rim of $W_t.$ Wheels form a classical and important family of forbidden graphs in Tur\'an-type problems. As such they have been studied extensively. The Tur\'an-type problem for odd wheels was resolved in \cite{Dzido2018} and \cite{Yuan2021}, as covered in the next two theorems.

\begin{theorem}[\!\cite{Dzido2018}]
Let $W_5$ be the wheel on $5$ vertices. Then
\begin{eqnarray*}
\mathrm{ex}(n,W_5)=
\begin{cases}
\frac{n^2}{4}+\frac{n}{2}-1, &n\equiv 2\!\!\!\!\!\pmod{4},\\[2mm]
\big\lfloor \frac{n^2}{4}\big\rfloor+\big\lfloor \frac{n}{2}\big\rfloor, &otherwise.
\end{cases}
\end{eqnarray*}
\end{theorem}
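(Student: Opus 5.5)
The plan has two halves: a construction for the lower bound and a stability-type argument for the upper bound.

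\emph{Lower bound.} Observe first that $W_5$ is the vertex $v$ joined to a $C_4$. So $G$ contains $W_5$ exactly when some vertex $v$ has a $C_4$ inside $G[N(v)]$. For each residue class of $n$ modulo $4$ we take the complete bipartite graph $K_{\lceil n/2\rceil,\lfloor n/2\rfloor}$. We then add inside each part a graph with no $P_3$, i.e.\ a matching, or as near-perfect a matching as parity permits. Consider a vertex $v$ in part $A$. Its neighbourhood is all of $B$ together with at most one matched partner $v'\in A$. Inside this neighbourhood, $B$ induces a matching and $v'$ is adjacent to all of $B$. Thus $G[N(v)]$ is a matching plus a dominating vertex, i.e.\ a union of triangles sharing $v'$ together with pendant edges. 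Such a graph contains no $C_4$: any cycle through $v'$ uses two edges at $v'$ and must close within one matching edge, giving only a triangle. Hence the graph is $W_5$-free. It has $\lfloor n^2/4\rfloor$ bipartite edges plus $\lfloor |A|/2\rfloor+\lfloor|B|/2\rfloor$ matching edges. This equals $\lfloor n^2/4\rfloor+\lfloor n/2\rfloor$, except when $n\equiv 2\pmod 4$. In that case both parts are odd, so we lose one edge, which gives $n^2/4+n/2-1$. (One should check that for $n\equiv2$ an unbalanced split does not do better. It does not, since it loses at least $1$ bipartite edge while gaining at most one matching edge.)

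\emph{Upper bound.} We argue by induction on $n$ with a stability step. Let $G$ be $W_5$-free with more edges than the bound. Every $N(v)$ is $C_4$-free, so $e(G[N(v)])\le \mathrm{ex}(d(v),C_4)=O(d(v)^{3/2})$. Summing over $v$ bounds the number of triangles by $O(n^{5/2})$. By the triangle removal lemma and the Erd\H{o}s--Simonovits stability theorem, $G$ is then $o(n^2)$-close to $T_{n,2}$. Take a max-cut partition $A\cup B$. Standard cleaning shows every vertex has at least roughly $n/2-o(n)$ neighbours across the cut.

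Next we show that each class induces a graph of maximum degree at most $1$. Suppose $u\in A$ had two neighbours $x,y\in A$. Then $x$, $y$ and $u$ have $\Omega(n)$ common neighbours in $B$, and any two of these, say $b_1,b_2$, give the cycle $x b_1 y b_2 x$ in $N(u)$. This is a $C_4$ around $u$, hence a $W_5$, a contradiction. So $e(G[A])\le\lfloor|A|/2\rfloor$, and similarly for $B$. Combined with $e(A,B)\le|A||B|$, this yields the bound once the parity analysis for $n\equiv2\pmod4$ is done. There we need that $|A|=|B|$ odd forces a missing edge, and that any imbalance costs at least one cross edge.

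\emph{Main obstacle.} The hard part is the step in which vertices of low cross degree are removed. A vertex with small degree into the opposite side must be deleted, and one must track that the edge excess over the extremal bound grows under deletion. This makes induction on $n$ (Simonovits' progressive induction) run correctly, and the delicate issue is handling the additive $\lfloor n/2\rfloor$ term precisely.
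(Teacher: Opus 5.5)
The paper does not prove this statement (it is quoted from \cite{Dzido2018}), so your plan has to be judged on its own. Your lower bound is fine: $K_{\lceil n/2\rceil,\lfloor n/2\rfloor}$ with a maximum matching inside each part is $W_5$-free by your neighbourhood argument, and it has exactly the claimed number of edges.

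The genuine gap is in the upper bound, and it is a gap of scope. The formula is asserted for every $n$, with no largeness hypothesis, and it is already sharp for small $n$ (e.g.\ $\mathrm{ex}(5,W_5)=8$ and $\mathrm{ex}(6,W_5)=11$). Triangle removal, Erd\H{o}s--Simonovits stability and progressive induction can only give the bound for $n\ge n_0$, where $n_0$ is ineffective. Progressive induction only guarantees a subgraph of order about $\sqrt n$, so you even need $\sqrt n$ to exceed the stability threshold. Nothing in your plan covers $n<n_0$, and no finite check can bridge that range. To prove the theorem as stated you need an exact argument valid for all $n$, typically an induction that exploits the structure of $W_5$. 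For instance, write $f(n)$ for the right-hand side; then $f(n)=f(n-4)+2n-2$. If $G$ contains a $K_4$ on $\{a,b,c,d\}$, every outside vertex $x$ is adjacent to at most two of $a,b,c,d$: if $x$ were adjacent to $a,b,c$, then $xbdc$ would be a $C_4$ in $N(a)$. So deleting the $K_4$ removes at most $6+2(n-4)=2n-2$ edges and induction applies. The $K_4$-free case still needs its own argument.

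Even for large $n$, the step you flag as the main obstacle is stated incorrectly. It is not true that every vertex of $G$ has cross-degree $n/2-o(n)$. Max-cut alone only gives cross-degree at least $d(v)/2$, which is far too little for your three-vertex common-neighbourhood count. The standard repair goes as follows.
\begin{enumerate}[(1)]
\item Delete, one at a time, vertices of degree below $f(m)-f(m-1)\in\{\frac{m-1}{2},\frac m2,\frac{m+1}{2},\frac m2+1\}$. Each deletion raises $e-f$ by at least one, so the process stops at some $m=\Omega(\sqrt n)$ with minimum degree at least $\frac{m-1}{2}$.
\item Apply stability to this subgraph.
\item Use max-cut to show that no vertex $u$ has $\gamma m$ neighbours in its own part. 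Otherwise $u$ also has $\gamma m$ neighbours across, and these two sets span an almost complete bipartite graph, hence a $C_4$ in $N(u)$.
\end{enumerate}
Only after this do you get cross-degrees $m/2-O(\gamma m)$, $\Delta\le 1$ inside each part, and the contradiction $e\le |A||B|+\lfloor |A|/2\rfloor+\lfloor |B|/2\rfloor\le f(m)$. This closes the large-$n$ case, but it does not touch the all-$n$ issue above. Incidentally, the triangle-removal detour is unnecessary: since $\chi(W_5)=3$, Lemma~\ref{lem2.1} applies to $W_5$ directly.
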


\begin{theorem}[\!\cite{Yuan2021}]
Let $k\geq3$ and $n$ be integers. For $n$ sufficiently large,
\begin{eqnarray*}
\mathrm{ex}(n,W_{2k+1})=\max\left\{n_0n_1+\left\lfloor\frac{(k-1)n_0}{2}\right\rfloor+1:n_0+n_1=n,\ n_0,n_1\geq 0\ \text{ are integers }\right\}.
\end{eqnarray*}
\end{theorem}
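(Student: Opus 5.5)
The plan is to prove matching lower and upper bounds, with the upper bound coming from a stability argument followed by exact cleaning.

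\textbf{Lower bound.} Split the vertices into $V_0$ and $V_1$ with $|V_0|=n_0$ and $|V_1|=n_1$. Join $V_0$ completely to $V_1$, keep $V_1$ independent, and place inside $V_0$ a graph $H$ with $\lfloor (k-1)n_0/2\rfloor+1$ edges. The graph $H$ should have maximum degree $k-1$ except for one vertex $w$ of degree $k$, and every component of $H$ should be $C_{2k}$-free; for instance, take disjoint small $(k-1)$-regular or near-regular pieces and add a single extra edge at $w$.

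To check that this graph contains no copy of $W_{2k+1}$, argue on the position of the centre. Its rim is $C_{2k}$, and $V_1$ is independent. If the centre lies in $V_1$, the rim lies in $V_0$, so it would be a $C_{2k}$ in $H$, which is excluded. If the centre lies in $V_0$, any two consecutive rim vertices cannot both lie in $V_1$, so at least $k$ rim vertices lie in $V_0$. All of them are neighbours of the centre inside $H$, so the centre has $H$-degree at least $k$. Hence the centre must be $w$, its neighbourhood must be exactly its $k$ neighbours in $H$, and the rim must alternate between $V_0$ and $V_1$. I would then use that the neighbours of $w$ are pairwise non-adjacent by construction, or otherwise control the component of $w$, to exclude this case. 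Finally, optimise over $n_0$.

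\textbf{Upper bound, step 1 (stability and max-cut).} Let $G$ be $W_{2k+1}$-free with $e(G)\ge$ the claimed value. Since $\chi(W_{2k+1})=3$, the Erd\H{o}s--Stone--Simonovits stability theorem gives a bipartition $V_0\cup V_1$ maximising $e(V_0,V_1)$ with $o(n^2)$ edges inside the parts. Moreover, each part has size $n/2+o(n)$.

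\textbf{Upper bound, step 2 (cleaning).} Delete, one at a time, vertices of degree below $(1/2-\varepsilon)n$. A standard counting argument shows only $O(1)$ of them exist, since otherwise the edge count drops below $\lfloor n^2/4\rfloor$. After this, every remaining vertex has almost all of its neighbours across the cut, and any bounded set of vertices on one side has a large common neighbourhood on the other side.

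\textbf{Upper bound, step 3 (local structure).} Here the key embedding lemmas are established, each using the large common neighbourhoods to route the rim alternately.

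\emph{Degree lemma.} If a vertex $v\in V_i$ has $k$ neighbours $x_1,\dots,x_k$ inside $V_i$, then in most situations we can pick $y_1,\dots,y_k\in V_{1-i}$ in common neighbourhoods to close a rim $x_1y_1x_2\cdots x_ky_k$ around $v$. So, except for very constrained configurations, internal degrees are at most $k-1$.

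\emph{One-sided lemma.} Internal edges in both parts would give a $W_{2k+1}$: take the centre in one part, use an internal edge there together with a path through the other part's internal edge and alternating vertices. Hence essentially all internal edges sit in one part, say $V_0$.

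\textbf{Upper bound, step 4 (counting).} Then $e(G)\le n_0n_1 + e(G[V_0])$, plus corrections from the deleted vertices, and $e(G[V_0])\le \lfloor (k-1)n_0/2\rfloor + 1$. The remaining work is to show that the degree-sum excess over $(k-1)n_0$ is at most $2$, i.e.\ at most one extra edge. Two vertices with internal degree $k$, or one with degree $k+1$, should produce a wheel, using $k\ge 3$ to gain flexibility in choosing rim vertices. Adding back the deleted vertices then only loses edges.

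\textbf{Main obstacle.} I expect step 4's exact accounting to be the hardest part: pinning down the ``$+1$'' and ruling out several slightly-high-degree internal vertices together with the low-degree exceptional vertices. To handle the exceptional vertices, I would first show each of them has degree at least $n/2 - O(1)$. Then a local exchange argument (moving a vertex to the other side or deleting edges) should show that any extra internal edges force a copy of $W_{2k+1}$.
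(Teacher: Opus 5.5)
The paper only quotes this theorem from the literature and gives no proof of it, so your outline has to stand on its own. It has a genuine error at its base: \emph{your lower-bound graph is not $W_{2k+1}$-free.}

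\textbf{The construction contains a wheel.} Let $x_1,\dots,x_k$ be the $H$-neighbours of the special vertex $w\in V_0$, and take any $y_1,\dots,y_k\in V_1$ (possible since $n_1\ge k$). The cycle $x_1y_1x_2y_2\cdots x_ky_kx_1$ uses only edges between $V_0$ and $V_1$, all of which are present. Also $w$ is adjacent to every $x_i$ and every $y_j$. So $w$ is the centre of a $W_{2k+1}$. Non-adjacency of the $x_i$ is irrelevant, because no rim edge lies inside $N_H(w)$; you isolated exactly this alternating configuration and then could not exclude it. The edge count also fails: with one vertex of degree $k$ and all others of degree at most $k-1$, the degree sum is at most $(k-1)n_0+1$, which gives the $+1$ only when $(k-1)n_0$ is odd.

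This is really your own degree lemma. When $V_0$ is completely joined to $V_1$, one has $\Delta(G[V_0])\le k-1$, hence $e(G[V_0])\le\lfloor (k-1)n_0/2\rfloor$, so the $+1$ cannot live in $V_0$. It has to go into the other part. Take $H$ on $V_0$ with $\Delta(H)\le k-1$, $e(H)=\lfloor (k-1)n_0/2\rfloor$, and every component on at most $2k-2$ vertices (e.g.\ disjoint $(k-1)$-regular pieces of orders between $k$ and $2k-2$, with at most one vertex of degree $k-2$ overall). Join $V_0$ completely to $V_1$ and add a single edge $ab$ inside $V_1$. Then:
\begin{itemize}
\item For a centre in $V_1$, the rim has at most one vertex outside $V_0$. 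So $H$ would contain a path on $2k-1$ vertices, which the component sizes forbid.
\item For a centre $c\in V_0$, suppose the rim has $m$ vertices in $V_1$. Then $2k-m\le d_H(c)\le k-1$, so $m\ge k+1$. At most one rim edge lies inside $V_1$, so counting rim edges between $V_1$ and $N_H(c)$ gives $2m-2\le 2(2k-m)$, i.e.\ $m\le k$, a contradiction.
\end{itemize}

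\textbf{The upper-bound outline targets the wrong configuration.} Your one-sided lemma is false as stated: the corrected construction is $W_{2k+1}$-free and has internal edges in both parts. The goal of Step~4, bounding the degree excess inside $V_0$ by $2$, is moot, since a single typical vertex of internal degree $k$ already yields a wheel. What must be proved instead is the interaction between the two parts:
\begin{itemize}
\item An edge $ab$ inside $V_1$ forbids any path on $2k-1$ vertices of $G[V_0]$ lying in $N(a)\cap N(b)$ (centre $a$, rim closed through $b$).
\item Two edges inside $V_1$, disjoint or forming a $P_3$, together with a typical $c\in V_0$ having $k-1$ neighbours in $V_0$, give a rim of $k-1$ blocks of $V_1$-vertices ($k+1$ vertices in total, containing both edges) alternating with those $k-1$ neighbours. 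This is where $k\ge3$ is used. Hence a second edge in $V_1$ forces typical internal degrees in $V_0$ down to $k-2$.
\end{itemize}
Only after establishing this trade-off, together with handling the exceptional low-degree vertices, can the counting in Step~4 reach $n_0n_1+\lfloor (k-1)n_0/2\rfloor+1$.
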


Regarding even wheels, Dzido \cite{Dzido2013} proved that $\mathrm{ex}(n,W_{2k})=\big\lfloor\frac{n^2}{3}\big\rfloor$ for every $k\geq3$ and $n\geq6k-10.$ In addition, he showed that $\mathrm{ex}(n,W_6)=\big\lfloor\frac{n^2}{3}\big\rfloor$ for all $n\geq 6.$
Note that $W_{2k+2}$ is color-critical and $\chi(W_{2k+2})=4$. Hence Simonovits' chromatic critical edge theorem implies that
\begin{eqnarray*}
\mathrm{ex}(n,W_{2k+2})=e(T_{n,3}) \quad\text{and}\quad \mathrm{Ex}(n,W_{2k+2})=\{T_{n,3}\}
\end{eqnarray*}
for sufficiently large $n.$ Since the extremal graph $T_{n,3}$ is $3$-partite, it is natural to study the following problem.

\begin{prob}\label{pro1.1}
Determine the maximum size of a non-$3$-partite $W_{2k+2}$-free graph with order $n$ together with the corresponding family of extremal graphs.
\end{prob}

This problem has been settled in the case $k=1$ (see Theorem \ref{thm:K4} below). Since $W_4\cong K_4,$ this case is equivalent to the extremal problem for non-$3$-partite $K_4$-free graphs.
For positive integers $a_1,\ldots,a_5,$ let $C_5[a_1,\ldots,a_5]$ denote the graph obtained from $C_5$ by replacing the $i$-th vertex with an independent set of size $a_i$, and by replacing
each edge of $C_5$ with a complete bipartite graph between the corresponding two independent sets. If $n \ge 6$ is even, then let $\mathscr{H}_n=\left\{C_5[\frac{n}{2}-2,t,1,1,\frac{n}{2}-t]: 1\leq t\leq \frac{n}{2}-1 \right\} \cup \left\{C_5[\frac{n}{2}-1,t,1,1,\frac{n}{2}-t-1]: 1\leq t\leq \frac{n}{2}-2 \right\}.$ If $n \ge 5$ is odd, then let $\mathscr{H}_n=\left\{C_5[\frac{n-1}{2}-1,t,1,1,\frac{n-1}{2}-t]:1\leq t\leq \frac{n-1}{2}-1\right\}.$

\begin{theorem}[\!\cite{Amin2013}]
	\label{thm:K4}
Let $n\geq 7$ and $n=3q+r,$ where $r\in\{0,1,2\}.$ Then
\begin{eqnarray*}
\mathrm{ex}_4(n,W_4)=\frac{n^2}{3}- \frac{n}{3} +\frac{r(r+2)}{6}-\frac{r}{2}+1=e(T_{n,3})-\left\lfloor\frac{n}{3}\right\rfloor+1.
\end{eqnarray*}
and
\begin{eqnarray*}
\mathrm{Ex}_4(n,W_4)=
\begin{cases}
\left\{C_5\vee \overline K_{n-5}\right\}, & q=2,\\[2mm]
\left\{G\vee \overline K_{q+1}: G\in\mathscr H_{2q-1}\right\}\cup \left\{G\vee\overline K_q: G\in\mathscr H_{2q}\right\}, & q\ge 3,\ r=0,\\[2mm]
\left\{G\vee\overline K_{q+1}: G\in\mathscr H_{2q}\right\}\cup \left\{G\vee\overline K_q: G\in\mathscr H_{2q+1}\right\}, & q\ge 3,\ r=1,\\[2mm]
\left\{G\vee\overline K_{q+1}: G\in\mathscr H_{2q+1}\right\}, & q\ge 3,\ r=2.
\end{cases}
\end{eqnarray*}
\end{theorem}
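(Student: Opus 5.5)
We must prove Theorem \ref{thm:K4}: for $n\ge 7$, the maximum size of a non-$3$-partite $K_4$-free graph is $e(T_{n,3})-\lfloor n/3\rfloor+1$, and the extremal graphs are the listed joins. This is the case $r=3$ of Theorem \ref{th1.6}, so the value is known; the work is in proving the upper bound again in a form that also identifies the extremal structures.

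\emph{Lower bound.} First check that every listed graph is $K_4$-free, non-$3$-partite and has the stated size. Each $G\in\mathscr H_m$ is a blow-up of $C_5$, so it is triangle-free and has chromatic number $3$. Hence $G\vee\overline K_s$ is $K_4$-free and $4$-chromatic. A direct count gives the size: $e(C_5[a_1,\dots,a_5])=\sum a_ia_{i+1}$, plus $s\,m$ for the join edges. Choosing $s\approx q$ or $q+1$ and the given parameters yields exactly $e(T_{n,3})-\lfloor n/3\rfloor+1$. This is routine arithmetic.

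\emph{Upper bound and structure.} Let $G$ be extremal, with $\chi(G)\ge4$ and $K_4$-free. I would use a maximum-edge $3$-cut, i.e.\ a partition $V_1,V_2,V_3$ maximizing $e_G(V_1,V_2,V_3)$. The inside edges are nonempty because $G$ is non-$3$-partite. Each inside edge $uv$ (say in $V_1$) forces missing cross edges: by maximality, $u$ has at least $d(u,V_1)$ neighbours in each other part. Since $G$ is $K_4$-free, the common neighbourhood of $u,v$ is independent, so $N(u)\cap N(v)\cap V_2$ and $N(u)\cap N(v)\cap V_3$ are not completely joined. Counting these missing edges shows each inside edge costs roughly $\min(|V_2|,|V_3|)\approx n/3$ cross edges, which already gives the bound up to an additive constant. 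For the exact bound I would instead induct on $n$, using a minimum-degree vertex. If $\delta(G)\le 2n/3-1$ (or the appropriate threshold), delete that vertex and apply induction to $G-v$, handling separately the case where $G-v$ becomes $3$-partite. Otherwise $\delta$ is large, and Andrásfai–Erdős–Sós-type reasoning forces $G$ to be a join of a vertex class with a triangle-free non-bipartite graph. Concretely, I would take a vertex $x$. Then $G[N(x)]$ is triangle-free, and the large degree makes $G$ contain an independent set $S$ of size about $n/3$ fully joined to $G-S$. Then $G-S$ is triangle-free and non-bipartite, and Erdős's theorem $\mathrm{ex}_3(m,C_3)=\lfloor (m-1)^2/4\rfloor+1$ gives the bound.

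\emph{Main obstacle.} The hardest part is the exact characterization, including the small case $q=2$ and the three residues. Equality in the non-bipartite triangle-free step forces $G-S$ to be an extremal graph for $\mathrm{ex}_3(m,C_3)$. I would then need to show that these extremal graphs are precisely $\mathscr H_m$, namely $C_5$ blow-ups with two singleton classes and the other classes balanced. I would try proving that separately by a short induction on odd cycles: take a shortest odd cycle, which must be $C_5$ at extremality, and count its adjacencies. After that, optimizing $|S|=s$ against $m=n-s$ gives exactly the listed pairs $(s,m)$ for each residue $r$.
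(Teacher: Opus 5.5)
The paper does not prove this statement. It is quoted from \cite{Amin2013} as background (the $K_4$ case of Theorem~\ref{th1.6}, together with the extremal graphs), so your plan has to stand on its own. Your lower-bound verification is fine. The upper bound, and above all the characterization, have a genuine gap.

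\emph{The branch where you expect the extremal structure to appear is empty.} Write $f(n)=e(T_{n,3})-\lfloor n/3\rfloor+1$; then $f(n)-f(n-1)$ equals $2q-1$, $2q$, $2q+1$ for $r=0,1,2$.
\begin{itemize}
\item If a $K_4$-free $G$ had $\delta(G)\ge f(n)-f(n-1)+1$, then $e(G)\ge n\delta(G)/2\ge e(T_{n,3})$. Tur\'an's theorem then forces $G\cong T_{n,3}$, which is $3$-partite.
\item Andr\'asfai--Erd\H{o}s--S\'os likewise gives $3$-partiteness once $\delta>5n/8$. It never yields a join structure.
\item So there is no large-$\delta$ case, and nothing in your argument produces an independent set $S$ of size about $n/3$ completely joined to $G-S$. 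The reduction to $\mathrm{ex}_3(m,C_3)$ therefore never applies.
\item Consistently, the listed extremal graphs have minimum degree only about $n/2$. In $C_5[a_1,t,1,1,a_5]\vee\overline K_s$, one of the two singleton classes has degree at most about $m/4+1+s$.
\end{itemize}

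\emph{All the work therefore lies in the deletion step, mainly in the subcase you set aside: $G-v$ is $3$-partite.} This is exactly where the extremal graphs arise. Deleting that low-degree singleton vertex leaves a $P_4$-blow-up joined to $\overline K_s$, which is $3$-partite.

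In this subcase the crude bound $e(T_{n-1,3})+d(v)$ only suffices for $d(v)\lesssim n/3$, so you need a cross count:
\begin{itemize}
\item Let $G-v$ have parts $A,B,C$. Then $v$ has $a,b,c\ge 1$ neighbours in them, since otherwise $G$ is $3$-partite.
\item $K_4$-freeness forbids every triangle across these three neighbourhoods. Taking $c$ largest, at least $ab$ of the cross pairs are non-edges.
\item Hence
$e(G)\le |A||B|+|B||C|+|C||A|+c+1-(a-1)(b-1)\le |A||B|+|B||C|+|C||A|+|C|+1$.
\item Maximizing over $|A|+|B|+|C|=n-1$ gives exactly $f(n)$.
\end{itemize}
Equality forces the following:
\begin{itemize}
\item $|C|\in\{q,q+1\}$ (only $q+1$ when $r=2$), $A$ and $B$ balanced, and $C\subseteq N(v)$;
\item after possibly swapping $A$ and $B$, $N(v)\cap A=\{x\}$;
\item $G-v$ is $K_{|A|,|B|,|C|}$ minus exactly the edges from $x$ to $N(v)\cap B$.
\end{itemize}
That is, $G=C_5[1,1,|B|-b,|A|-1,b]\vee\overline K_{|C|}$, which are precisely the listed families.

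You would also need two further pieces:
\begin{itemize}
\item The equality analysis in the non-$3$-partite branch. It forces $G-v$ to be extremal and $\delta(G)=f(n)-f(n-1)$. This is impossible for larger $n$, since $\delta(G-v)\approx n/2$, but it does occur for small $n$; for example, $C_5\vee\overline K_3$ arises from $C_5\vee\overline K_2$.
\item A direct check of the base cases.
\end{itemize}
None of this is in your proposal, and the route you describe for the characterization cannot supply it.
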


In this paper, we solve Problem \ref{pro1.1} for all $k \geq 2$ and sufficiently large $n$. Let $K_{|V_1|,|V_2|,|V_3|}$ be the complete $3$-partite graph on $n$ vertices with partite sets $V_1, V_2$ and $V_3$, where $|V_1|=\big\lceil\frac{n-1}{3}\big\rceil+1$ or $|V_1|=\big\lfloor\frac{n-1}{3}\big\rfloor+1$ and $\big||V_2|-|V_3|\big|\leq 1.$
Let $H$ be the graph obtained from $K_{|V_1|,|V_2|,|V_3|}$ by adding an edge $uv$ with $\{u,v\}\subset V_1.$
Let $V^*_2\subset V_2$ and $w\in V_3,$ where $1\leq |V^*_2|\leq |V_2|-1.$ Let $G^*(n,|V^*_2|)$ be the graph obtained from $H$ by removing all edges in $E(V_2\setminus V^*_2,\{v\})\cup E(V_3\setminus \{w\},\{u\})\cup E(V^*_2,\{w\})$ (see Fig. \ref{fig1}). Define
$$
\mathscr{G}(n) = \{G^*(n,|V^*_2|): 1\leq |V^*_2|\leq |V_2|-1\}.
$$
Note that, for any $G^*\in \mathscr{G}(n)$ and $x\in V(G^*)$, $G^*[N(x)]$ is bipartite and so contains no odd cycles. Hence $G^*$ is $W_{2k+2}$-free.
Note also that $e(G^*(n,|V^*_2|))= e(T_{n,3})-\left(n-\big\lceil\frac{n}{3}\big\rceil\right)+2=e(T_{n-1,3})+2.$

\begin{figure}[H]
\centering
\includegraphics[width=0.6\textwidth]{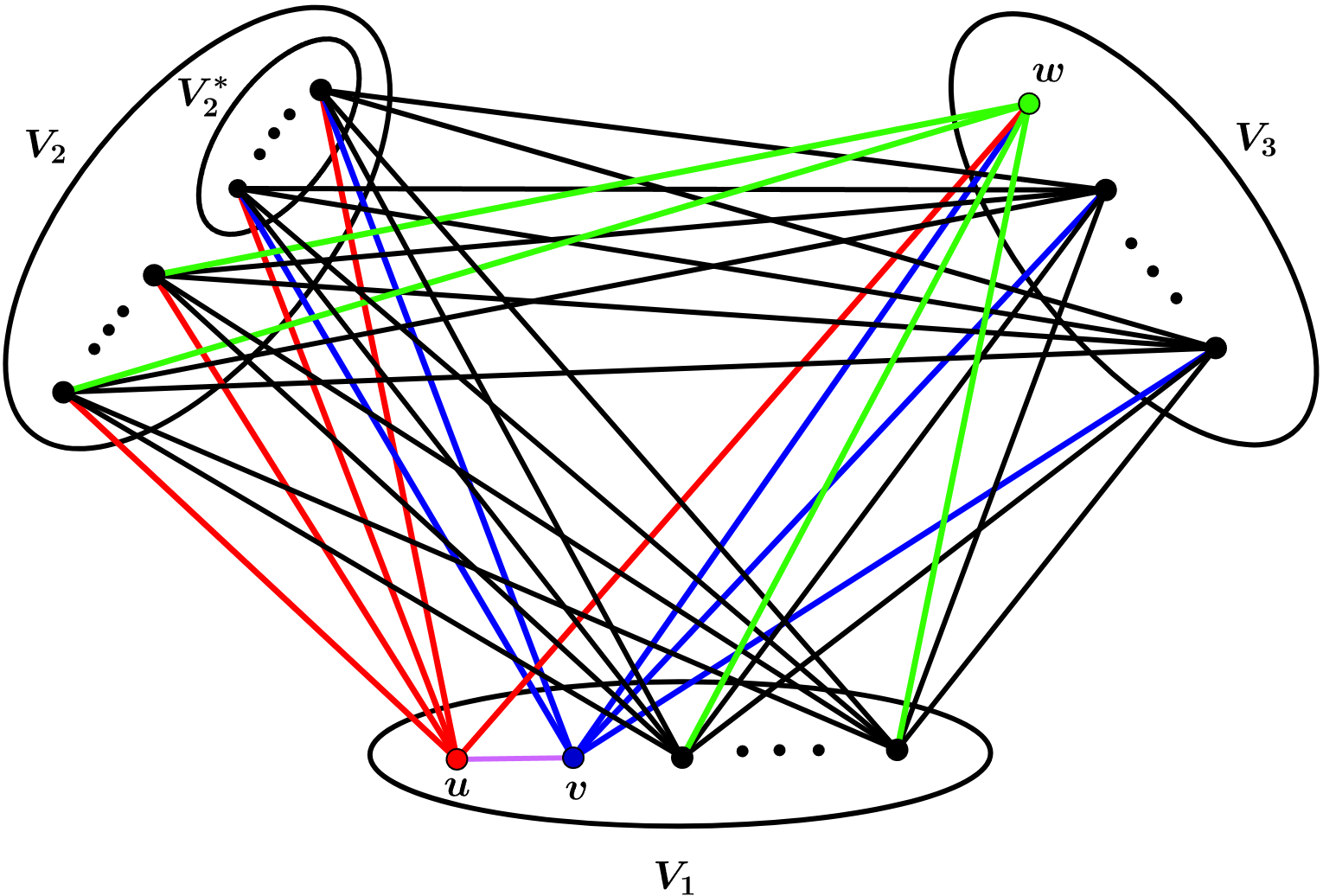}
\caption{$G^*(n,|V^*_2|).$}\label{fig1}
\end{figure}

The main result in this paper is as follows.

\begin{theorem}\label{main1}
Let $k \geq 2$ be an integer and let $n$ be sufficiently large. Then
\begin{eqnarray*}
\mathrm{ex}_4(n, W_{2k+2})=e(T_{n,3})-\left(n-\left\lceil\frac{n}{3}\right\rceil\right)+2=e(T_{n-1,3})+2 \quad\text{and}\quad
\mathrm{Ex}_4(n, W_{2k+2})=\mathscr{G}(n).
\end{eqnarray*}
\end{theorem}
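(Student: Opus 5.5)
The lower bound is already in place: every $G^*\in\mathscr{G}(n)$ is non-$3$-partite (the edge $uv$ together with $w$ and the sets $V_2,V_3$ force an odd structure, which can be checked directly), is $W_{2k+2}$-free, and has $e(T_{n-1,3})+2$ edges. So it remains to take a non-$3$-partite $W_{2k+2}$-free graph $G$ on $n$ vertices with $e(G)\ge e(T_{n-1,3})+2$, show that equality holds, and show that $G\in\mathscr{G}(n)$.

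\textbf{Step 1: a near-optimal partition.} Since $e(G)\ge e(T_{n,3})-n/3-O(1)$ and $W_{2k+2}$ has chromatic number $4$, the Erd\H{o}s--Simonovits stability theorem gives a partition $V_1\cup V_2\cup V_3$ maximizing $e_G(V_1,V_2,V_3)$, with $|V_i|=n/3+o(n)$ and $\sum_i e(G[V_i])=o(n^2)$. Call $x\in V_i$ \emph{bad} if $d(x,V_i)\ge \varepsilon n$ or $x$ misses at least $\varepsilon n$ vertices of some $V_j$ with $j\ne i$. Counting missing cross edges shows there are only $o(n)$ bad vertices.

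\textbf{Step 2: good vertices.} Let $L=V\setminus\{\text{bad}\}$. Any edge inside $V_i\cap L$ has endpoints with $\Omega(n)$ common neighbours in each of the other two parts. Hence such an edge $xy$ yields a $W_{2k+2}$: take $x$ as the centre and build an odd rim through $y$ alternating between $V_j$ and $V_l$, with one extra vertex closing it to odd length. The same argument with a bad vertex $z$ shows that $z$ cannot have linearly many neighbours in all three parts. After reassigning parts using maximality, one obtains the following:
\begin{itemize}
\item each bad vertex has $o(n)$ neighbours in its own part;
\item $G[V_i]$ has only $O(1)$ edges, each incident to a vertex of small degree into some other part.
\end{itemize}
This step needs $k\ge2$ only to guarantee room for the odd rim, which exists because $n$ is large.

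\textbf{Step 3: the key counting (main obstacle).} Since $G$ is not $3$-partite, some part, say $V_1$, contains an edge $uv$. For every $x\in V_2\cup V_3$ adjacent to both $u$ and $v$, the vertices $u,v,x$ form a triangle. Moreover, if $u,v$ have a common neighbourhood containing a path of length $2$ in $V_2\cup V_3$, then $u$ or $v$ is the centre of a wheel: its neighbourhood contains an odd cycle of length $2k+1$, built from the triangle structure plus long alternating paths between $N(u)\cap V_2$ and $N(u)\cap V_3$.

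The crux is to show that each internal edge $uv$ costs at least $|V_2|+|V_3|-2$ cross edges, roughly $2n/3$. The plan is to prove that $N(u)\cap N(v)\cap(V_2\cup V_3)$ is independent, and further that this forces, for each $x\in V_2\cup V_3$ except about two vertices, a missing edge among $xu$, $xv$, or an edge to the common neighbour.

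The hard part is that the deficit must be charged precisely, not only up to $o(n)$. To pin down the exact $+2$, I would do the following:
\begin{enumerate}
\item Fix a vertex $w$ that is adjacent to both $u$ and $v$.
\item Argue that $N(u)\cap N(v)$ cannot contain two vertices from distinct parts joined to many common vertices.
\item Enumerate the configurations with exactly one internal edge.
\end{enumerate}
Several internal edges would cost proportionally more, since each costs $\Omega(n)$ independently once it is shown that internal edges cannot share their deficit.

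\textbf{Step 4: optimizing sizes and characterizing equality.} Combine
\[
e(G)\le e(K_{|V_1|,|V_2|,|V_3|})+1-(|V_2|+|V_3|-2)
\]
with optimization over the part sizes. The maximum is attained exactly when $|V_1|-1\in\{\lfloor (n-1)/3\rfloor,\lceil (n-1)/3\rceil\}$ and $\big||V_2|-|V_3|\big|\le1$, which yields $e(T_{n-1,3})+2$. Tracking equality in Step 3 then forces the structure $G^*(n,|V_2^*|)$:
\begin{itemize}
\item $u$ misses $V_3\setminus\{w\}$;
\item $v$ misses $V_2\setminus V_2^*$;
\item $w$ misses $V_2^*$.
\end{itemize}
Here the condition $1\le|V_2^*|\le|V_2|-1$ is forced by non-$3$-partiteness or maximality. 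Finally, bad vertices are excluded in the extremal case, since any bad vertex loses $\varepsilon n$ edges and this exceeds the slack.
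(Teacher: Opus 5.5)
Step 1 and the first part of Step 2 are fine and parallel Lemmas~\ref{lem3.1}--\ref{lem3.4}. Your direct alternating-rim construction replaces the paper's appeal to Simonovits' theorem for an edge between two good vertices.

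The genuine gap is the passage to ``$G[V_i]$ has only $O(1)$ edges'' and then to exactly one internal edge. Degree arguments give only $o(n)$ bad vertices with $o(n)$ internal neighbours each, i.e.\ $o(n^2)$ internal edges. Your claim that internal edges ``cannot share their deficit'' also fails: take a star $uv_1,\dots,uv_s$ of internal edges at one low-degree vertex $u$; all of them are paid for by the same missing edges at $u$.

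The missing idea is to exploit the extremality of $G$ by local rewiring. For $w\in L$, replacing $N_G(w)$ by $V(G)\setminus(V_a\cup L)$ strictly increases $e(G)$ and keeps the graph $W_{2k+2}$-free. So the new graph must be $3$-partite, with parts containing $V_i\setminus L$ (Lemma~\ref{lem3.5}). Consequently, suppose $uv\in E(G[V_a])$, $w\in L\setminus\{u,v\}$ is the rewired vertex, and $u$ lands in a part $V'_b$ with $b\neq a$. Then $N_G(u,V_b)=\{w\}$ and $N_G(u,V_a)=\{v\}$ (Lemma~\ref{lem3.7}). With some counting this gives $1\le|L|\le 3$ (Lemma~\ref{lem3.9}). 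The case $L=\{u\}$ with several internal edges at $u$ is then excluded by a second rewiring $G-uw'+\{w'x:x\in N_G(u,V_3)\}$, which produces a larger non-$3$-partite $W_{2k+2}$-free graph (Lemma~\ref{lem3.12}). Your sketch has no substitute for any of this.

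Step 3 also does not work as described. Wheels centred at $u$ or $v$ are generally unavailable, because the endpoints need not have many neighbours in both $V_2$ and $V_3$. In $G^*(n,|V_2^*|)$ one has $N(u)\cap V_3=\{w\}$, and $N(v)\cap V_2=V_2^*$ may be a single vertex. The wheels actually used are centred at high-degree vertices of $V_2\cup V_3$. They put a segment $\cdot\,u\,v\,\cdot$ on the rim and close it by a path of prescribed parity inside the centre's neighbourhood (Lemma~\ref{lem3.11}). Set $S_1=N_G(u,V_2)\cap N_G(v,V_2)$ and $S_2=N_G(u,V_3)\cap N_G(v,V_3)$. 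With these wheels and a four-case analysis, one proves $|S_1|\ge 1$, $|S_2|=1$, and $E_G(S_1,N_G(u,V_3))=\emptyset$ or $E_G(S_1,N_G(v,V_3))=\emptyset$ (Lemmas~\ref{lem3.13}--\ref{lem3.15}).

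Your target inequality is also off by one: $e(K_{|V_1|,|V_2|,|V_3|})+1-(|V_2|+|V_3|-2)=e(K_{|V_1|-1,|V_2|,|V_3|})+3$, which only gives $e(T_{n-1,3})+3$. The true deficit is $|V_2|+|V_3|-1$: $v$ misses $V_2\setminus V_2^*$, $u$ misses $V_3\setminus\{w\}$, and $w$ misses $V_2^*$. This matches the paper's bound $e(G)\le e(K_{|V_1|-1,|V_2|,|V_3|})-|S_1|(d_G(u,V_3)-1)+2$.

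Finally, ``bad vertices are excluded in the extremal case'' is false. In $G^*(n,|V_2^*|)$ the vertex $u$ misses all but one vertex of $V_3$, so it is bad by your definition. It must be, since your Step 2 forbids internal edges between good vertices.
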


It is interesting to note that the family of extremal graphs $\mathscr{G}(n)$ is independent of $k$. We will elaborate this in the last section. The proof of Theorem \ref{main1} will be given in Section \ref{sec:pf}. The main tools used in our proof are Simonovits' chromatic critical edge theorem (Theorem \ref{th1.7}) and the Erd\H{o}s-Simonovits stability theorem (Lemma \ref{lem2.1}).

\section{Preliminaries}

In this section, we present three preliminary results which are essential to the proof of our main theorem. The first one is the following classical Erd\H{o}s-Simonovits stability theorem.

\begin{lemma}[\!\cite{E1966,E1968,S1968}]\label{lem2.1}
Let $H$ be a graph with $\chi(H)=r+1\geq3.$ For every $\varepsilon>0,$ there exists a constant $\delta>0$ and an integer $n_1(H)$ such that if $G$ is an $H$-free graph on $n\geq n_1(H)$ vertices with $e(G)\geq \big(1-\frac{1}{r}-\delta\big)\frac{n^{2}}{2},$ then $G$ can be obtained from $T_{n,r}$ by adding and deleting at most $\varepsilon n^{2}$ edges.
\end{lemma}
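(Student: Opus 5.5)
The plan is to reduce to the case $H=K_{r+1}$ by a removal argument, and then prove a stability statement for $K_{r+1}$-free graphs directly.

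\emph{Reduction to cliques.} Let $h=|V(H)|$. Since $\chi(H)=r+1$, $H$ is a subgraph of the complete $(r+1)$-partite graph $K_{r+1}(h)$ with all parts of size $h$. So $G$ is $K_{r+1}(h)$-free.

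First, $G$ has at most $\eta n^{r+1}$ copies of $K_{r+1}$, for any fixed $\eta>0$ once $n$ is large. Suppose instead it had $\eta n^{r+1}$ copies. These form a dense $(r+1)$-uniform hypergraph on $V(G)$. Erd\H{o}s' hypergraph K\H{o}v\'ari--S\'os--Tur\'an theorem then gives a complete $(r+1)$-partite $(r+1)$-graph with parts of size $h$. Its parts span a copy of $K_{r+1}(h)$ in $G$, a contradiction.

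Second, apply the graph removal lemma (a consequence of Szemer\'edi's regularity lemma). It lets us delete at most $\varepsilon n^2/3$ edges from $G$ to obtain a $K_{r+1}$-free graph $G'$. We will choose $\eta$ small enough in terms of $\varepsilon$ for this. Taking $\delta\le \varepsilon/10$ and $n$ large, we get $e(G')\ge \bigl(1-\frac1r\bigr)\frac{n^2}{2}-\delta n^2$.

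\emph{Stability for $K_{r+1}$-free graphs.} Next we prove Füredi's form of stability. If $G'$ is $K_{r+1}$-free with $e(G')\ge e(T_{n,r})-t$, then $G'$ becomes $r$-partite after deleting at most $t$ edges. We argue by induction on $r$; the case $r=1$ is trivial.

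Take a vertex $x$ of maximum degree $\Delta$. Let $A=N(x)$ and $B=V\setminus A$. Then $G'[A]$ is $K_r$-free. Summing degrees over $B$ gives
$$
e(G')\le e(G'[A])+\sum_{v\in B}d(v)-e(G'[B])\le e(G'[A])+(n-\Delta)\Delta-e(G'[B]).
$$
The function $e(T_{\Delta,r-1})+(n-\Delta)\Delta$ is at most $e(T_{n,r})$. Hence
$$
e(G'[B])+\bigl(e(T_{\Delta,r-1})-e(G'[A])\bigr)\le t.
$$
Apply induction to $G'[A]$ with deficiency $t_A=e(T_{\Delta,r-1})-e(G'[A])$. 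This makes $G'[A]$ $(r-1)$-partite after deleting $t_A$ edges. Then delete the $e(G'[B])$ edges inside $B$ and take $B$ as the $r$-th part. The total number of deletions is at most $t$.

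\emph{Closeness to $T_{n,r}$.} Let $G''$ be the resulting $r$-partite graph, with parts $V_1,\dots,V_r$. We have $e(G'')\ge (1-\frac1r)\frac{n^2}{2}-3\delta n^2$. The identity
$$
\sum_{i<j}|V_i||V_j|=\frac{n^2}{2}-\frac12\sum_i|V_i|^2
$$
then forces $\sum_i(|V_i|-n/r)^2=O(\delta n^2)$. So each $|V_i|=n/r+O(\sqrt\delta\, n)$.

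Consequently $G''$ differs from the complete $r$-partite graph on these parts in $O(\delta n^2)$ missing edges. That complete $r$-partite graph differs from $T_{n,r}$ by $O(\sqrt\delta\, n^2)$ edges, obtained by moving $O(\sqrt\delta\, n)$ vertices between parts. Adding up the edits from all three stages gives at most $\varepsilon n^2$ changes, provided $\delta$ is chosen small in terms of $\varepsilon$.

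\emph{Main obstacle.} The hardest step is the reduction from $H$ to $K_{r+1}$. It needs two inputs. The first is the supersaturation/K\H{o}v\'ari--S\'os--Tur\'an argument, to show that few copies of $K_{r+1}$ exist. The second is the removal lemma, whose dependence of $\eta$ on $\varepsilon$ is only tower-type, though that suffices here. An alternative that avoids the removal lemma is the original Erd\H{o}s--Simonovits route: repeatedly delete vertices of degree below $(1-\frac1r-\sqrt\delta)n$, and then use that a $K_{r+1}(h)$-free graph of high minimum degree has a structured neighbourhood. I would first try the removal-lemma route above.
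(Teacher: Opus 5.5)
The paper gives no proof of this lemma; it is quoted from Erd\H{o}s and Simonovits, whose original arguments are elementary and predate the regularity lemma. They are closer to the low-degree-deletion alternative you mention at the end. Your plan instead follows F\"uredi's derivation of stability:
\begin{enumerate}[(1)]
\item embed $H\subseteq K_{r+1}(h)$;
\item use Erd\H{o}s' hypergraph K\H{o}v\'ari--S\'os--Tur\'an theorem to show $G$ has $o(n^{r+1})$ copies of $K_{r+1}$;
\item apply the removal lemma;
\item prove linear stability for $K_{r+1}$-free graphs by the maximum-degree induction.
\end{enumerate}
These ingredients are used correctly. The induction, resting on $e(T_{\Delta,r-1})+\Delta(n-\Delta)\le e(T_{n,r})$ and $e(G'[B])+(e(T_{\Delta,r-1})-e(G'[A]))\le t$, is exactly right, and so is the part-size computation. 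Your route buys modularity and a sharp statement for cliques. Its cost is that $\delta$ and $n_1$ depend on $\varepsilon$ through the removal lemma (tower-type), whereas the classical proof gives polynomial dependence. Here only existence matters, so either route is acceptable.

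There is, however, a quantitative error that breaks the conclusion as written. If the removal lemma deletes up to $\varepsilon n^2/3$ edges, you only get $e(G')\ge (1-\frac1r)\frac{n^2}{2}-(\frac{\delta}{2}+\frac{\varepsilon}{3})n^2$. The bound $e(G')\ge(1-\frac1r)\frac{n^2}{2}-\delta n^2$ would need $\varepsilon/3\le\delta/2$, which contradicts $\delta\le\varepsilon/10$. This is not cosmetic, because your last step loses a square root. A deficiency $t$ only yields parts of size $n/r+O(\sqrt{t})$, and hence $O(\sqrt{t}\,n)$ edits to reach $T_{n,r}$.

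This loss is sharp. For $K_{n/2+s,\,n/2-s}$ with $s=\sqrt{\delta/2}\,n$, the edge count is exactly $(\frac12-\delta)\frac{n^2}{2}$. Yet any placement of its size-$(n/2+s)$ independent set inside $T_{n,2}$ contains at least $sn/2$ edges of $T_{n,2}$, so $\Theta(\sqrt{\delta})n^2$ changes are needed. With $t\approx\varepsilon n^2/3$ you therefore end with only $O(\sqrt{\varepsilon})n^2$ changes, and no later choice of $\delta$ repairs this.

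The fix is as follows:
\begin{itemize}
\item run the removal lemma with deletion parameter $\delta/2$, taking $\eta=\eta(\delta,r)$ and $n_1$ large in terms of $\eta$, so that $t\le\delta n^2$;
\item choose $\delta\le \varepsilon^2/(25r)$.
\end{itemize}
The total number of changes is then at most $\frac{\delta}{2}n^2+\delta n^2+2\delta n^2+2\sqrt{r\delta}\,n^2+O(rn)$. These terms come from removal, the F\"uredi deletions, the missing edges of $G''$, and rebalancing the parts. The sum is at most $\varepsilon n^2$ for large $n$.
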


\begin{lemma}[\!\cite{SLMX}]\label{lem2.2}
Let $V_1, \ldots, V_n$ be finite sets. Then
\begin{eqnarray*}
|V_1\cap \cdots\cap V_n|\geq \sum^n_{i=1}|V_i|-(n-1)\left|\bigcup^n_{i=1}V_i\right|.
\end{eqnarray*}
\end{lemma}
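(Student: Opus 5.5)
The plan is to pass to complements inside the union and apply the union bound. Put $U=\bigcup_{i=1}^n V_i$ and, for each $i$, let $\overline{V_i}=U\setminus V_i$ be the complement of $V_i$ relative to $U$. Every $V_i$ lies in $U$, so $|\overline{V_i}|=|U|-|V_i|$. By De Morgan's law inside $U$,
$$
U\setminus\Big(\bigcap_{i=1}^n V_i\Big)=\bigcup_{i=1}^n \overline{V_i}.
$$

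The key step is the subadditivity of cardinality, $\big|\bigcup_{i=1}^n \overline{V_i}\big|\leq\sum_{i=1}^n|\overline{V_i}|$, which holds because the sets are finite. Combining this with the previous identity gives
$$
|U|-\Big|\bigcap_{i=1}^n V_i\Big|\leq\sum_{i=1}^n\big(|U|-|V_i|\big)=n|U|-\sum_{i=1}^n|V_i|.
$$
Rearranging yields $\big|\bigcap_{i=1}^n V_i\big|\geq\sum_{i=1}^n|V_i|-(n-1)|U|$, which is the claimed inequality.

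No step here is a genuine obstacle. The only point needing care is to take complements relative to $U$ rather than some larger ambient set, so that the constant comes out as $(n-1)|U|$ exactly. If a more elementary argument is preferred, an equivalent route is induction on $n$. The base case $n=2$ is the identity $|V_1\cap V_2|=|V_1|+|V_2|-|V_1\cup V_2|$. For the induction step, write $A=V_1\cap\cdots\cap V_{n-1}$, apply the $n=2$ case to $A$ and $V_n$, and use $|A\cup V_n|\leq|U|$ together with the induction hypothesis for $A$, noting that $|V_1\cup\cdots\cup V_{n-1}|\leq|U|$ as well.
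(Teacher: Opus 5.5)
Your proof is correct. Complementing inside $U=\bigcup_{i=1}^n V_i$, then applying De Morgan's law and the union bound, gives $|U|-|\bigcap_{i=1}^n V_i|\leq n|U|-\sum_{i=1}^n|V_i|$, which rearranges to the claim. Your inductive variant is also sound: the step $-(n-2)|V_1\cup\cdots\cup V_{n-1}|\geq -(n-2)|U|$ needs only $n-2\geq 0$.

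The paper gives no proof of this lemma; it simply quotes it from \cite{SLMX}. So there is no in-paper argument to compare with, and your complement argument is the standard one.

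Your closing remark needs one correction: complementing relative to a larger ambient set $X\supseteq U$ is not an error. The same computation gives $|\bigcap_{i=1}^n V_i|\geq\sum_{i=1}^n|V_i|-(n-1)|X|$. This is weaker than the stated bound, but it is exactly the form in which the paper uses the lemma. For example, in Lemmas \ref{lem3.4}, \ref{lem3.5} and \ref{lem3.11}, the union of the sets $N_G(u_i,V_j)$ is replaced by the whole part $V_j$. Taking $X=U$ simply gives the sharpest version.
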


By a simple calculation, one can prove the following lemma.

\begin{lemma}\label{lem2.3}
Let $n$ and $r$ be integers with $n\geq r\geq2$. If $G$ is an $r$-partite graph of order $n,$ then $e(G) \leq e(T_{n,r}).$ Furthermore, we have
\begin{eqnarray*}
\left(1-\frac{1}{r}\right)\frac{n^2}{2}-\frac{r}{8}\leq e(T_{n,r})\leq \left(1-\frac{1}{r}\right)\frac{n^2}{2}.
\end{eqnarray*}
\end{lemma}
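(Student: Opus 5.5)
The statement to prove is Lemma \ref{lem2.3}. The first claim follows from two observations. An $r$-partite graph $G$ with parts $U_1,\ldots,U_r$ is a subgraph of the complete $r$-partite graph on the same parts, so
$$e(G)\le \sum_{i<j}|U_i||U_j|.$$
It therefore suffices to show that, among all partitions of $n$ into $r$ parts (we may allow empty parts, since this only enlarges the class being maximized over), the quantity $\sum_{i<j}a_ia_j$ is maximized when the $a_i$ are as equal as possible. I would use the standard balancing argument. Write $\sum_{i<j}a_ia_j=\frac{1}{2}\bigl(n^2-\sum_i a_i^2\bigr)$. If $a_1\ge a_2+2$, replacing $(a_1,a_2)$ by $(a_1-1,a_2+1)$ changes $\sum a_i^2$ by $-2(a_1-a_2)+2<0$, so the edge count strictly increases. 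Iterating, we reach a partition with all parts differing by at most one, which is exactly $T_{n,r}$. Hence $e(G)\le e(T_{n,r})$.

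For the upper bound on $e(T_{n,r})$, apply Cauchy--Schwarz in the form $\sum_i a_i^2\ge n^2/r$. This gives
$$e(T_{n,r})=\tfrac12\Bigl(n^2-\sum_i a_i^2\Bigr)\le \tfrac12\Bigl(n^2-\tfrac{n^2}{r}\Bigr)=\Bigl(1-\tfrac1r\Bigr)\tfrac{n^2}{2}.$$

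For the lower bound, write $n=qr+s$ with $0\le s<r$, so that $s$ parts have size $q+1$ and $r-s$ parts have size $q$. Setting $a_i=\frac{n}{r}+\epsilon_i$, where $\epsilon_i=1-\frac sr$ on $s$ parts and $\epsilon_i=-\frac sr$ on the other $r-s$ parts, we have $\sum_i\epsilon_i=0$. Therefore
$$\sum_i a_i^2=\frac{n^2}{r}+\sum_i\epsilon_i^2=\frac{n^2}{r}+s\Bigl(1-\frac sr\Bigr)^2+(r-s)\frac{s^2}{r^2}=\frac{n^2}{r}+\frac{s(r-s)}{r}.$$
Since $s(r-s)\le r^2/4$, this gives $\sum_i a_i^2\le \frac{n^2}{r}+\frac r4$, and hence
$$e(T_{n,r})\ge \Bigl(1-\frac1r\Bigr)\frac{n^2}{2}-\frac r8.$$

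The only step needing care is computing this deviation term exactly; the rest is routine.
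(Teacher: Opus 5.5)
Your proof is correct and supplies exactly the routine calculation the paper has in mind. The paper gives no argument beyond saying the lemma follows ``by a simple calculation.'' Your balancing step, together with the exact identity $\sum_i a_i^2=\frac{n^2}{r}+\frac{s(r-s)}{r}$ and the bound $s(r-s)\le r^2/4$, fills this in completely.
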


\section{Proof of Theorem \ref{main1}}
\label{sec:pf}

Throughout this section, we fix an integer $k\geq 2$, assume that $n$ is sufficiently large, and let $G$ be an arbitrary graph in $\mathrm{Ex}_4(n,W_{2k+2})$. It can be easily verified that any graph $H\in \mathscr{G}(n)$ is non-3-partite $W_{2k+2}$-free with size $e(H)= e(T_{n,3})-\big(n-\big\lceil\frac{n}{3}\big\rceil\big)+2=e(T_{n-1,3})+2.$ Since $G \in \mathrm{Ex}_4(n, W_{2k+2})$, it follows that
\begin{eqnarray}\label{eq1}
e(G)\geq e(H)= e(T_{n,3})-\left(n-\left\lceil\frac{n}{3}\right\rceil\right)+2=e(T_{n-1,3})+2.
\end{eqnarray}
Therefore, to prove Theorem \ref{main1} it suffices to prove that $G \in \mathscr{G}(n)$. Recall that $\overline{G}$ denotes the complement of $G$. So, for disjoint vertex sets $A,B\subseteq V(G)$, the number of non-edges of $G$ between $A$ and $B$ is given by $e_{\overline{G}}(A,B)$.

Choose $\varepsilon>0$ to be sufficiently small such that
\begin{eqnarray}\label{eq2}
\max\left\{96k\sqrt{\varepsilon},66n_0(W_{2k+2})\sqrt{\varepsilon}\right\}<1,
\end{eqnarray}
where $n_0(W_{2k+2})$ is guaranteed by Theorem \ref{th1.7}.

We need to prove a series of lemmas before we can prove Theorem \ref{main1}.

\begin{lemma}\label{lem3.1}
For sufficiently large $n,$ we have $e(G)\geq e(T_{n,3})-\varepsilon n^2.$ Moreover, there exists a partition $\{V_1, V_2, V_3\}$ of $V(G)$ with
\begin{eqnarray*}
\left(\frac{1}{3}-2\sqrt{\varepsilon}\right)n\leq |V_i|\leq \left(\frac{1}{3}+2\sqrt{\varepsilon}\right)n\,\ \text{ for }\, i \in \{1,2,3\}
\end{eqnarray*}
such that $\sum^3_{i=1}e(G[V_i])\leq \varepsilon n^2$, $e_{\overline{G}}(V_1,V_2,V_3)\leq \varepsilon n^{2}$, and $e_G(V_1,V_2,V_3)$ attains the maximum among $e_G(U_1,U_2,U_3)$ for all partitions $\{U_1, U_2, U_3\}$ of $V(G)$.
\end{lemma}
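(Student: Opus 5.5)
We start from the lower bound \eqref{eq1}. Since $e(T_{n-1,3})\ge e(T_{n,3})-\frac{2n}{3}-1$, we get $e(G)\ge e(T_{n,3})-n\ge e(T_{n,3})-\varepsilon n^2$ once $n$ is large. By Lemma \ref{lem2.3}, $e(G)\ge \frac{n^2}{3}-\frac{3}{8}-n\ge (1-\frac13-\delta)\frac{n^2}{2}$ for any fixed $\delta>0$ and large $n$. Since $G$ is $W_{2k+2}$-free and $\chi(W_{2k+2})=4$, Lemma \ref{lem2.1} (applied with $r=3$ and our $\varepsilon$) says that $G$ is obtained from $T_{n,3}$ by adding and deleting at most $\varepsilon n^2$ edges. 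Let $\{U_1,U_2,U_3\}$ be the parts of this copy of $T_{n,3}$. Then $\sum_i e(G[U_i])\le \varepsilon n^2$, and the number of missing crossing edges is at most $\varepsilon n^2$.

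Next we choose $\{V_1,V_2,V_3\}$ to maximize $e_G(V_1,V_2,V_3)$ over all partitions. Maximality and the partition $\{U_i\}$ give
\[
e_G(V_1,V_2,V_3)\ge e_G(U_1,U_2,U_3)\ge e(G)-\varepsilon n^2 \ge e(T_{n,3})-2\varepsilon n^2 .
\]
Hence $\sum_i e(G[V_i])=e(G)-e_G(V_1,V_2,V_3)\le \varepsilon n^2$. The last inequality needs care with constants. In fact $e(G)-e_G(U_1,U_2,U_3)=\sum e(G[U_i])$ is itself at most $\varepsilon n^2$, so $\sum e(G[V_i])\le\sum e(G[U_i])\le\varepsilon n^2$ directly.

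For the non-edges, write $e_{\overline G}(V_1,V_2,V_3)=\sum_{i<j}|V_i||V_j|-e_G(V_1,V_2,V_3)$. We have $\sum_{i<j}|V_i||V_j|\le e(T_{n,3})$ by Lemma \ref{lem2.3}, and $e_G(V_1,V_2,V_3)\ge e_G(U_1,U_2,U_3)\ge e(T_{n,3})-\varepsilon n^2$, because only at most $\varepsilon n^2$ edges of $T_{n,3}$ were deleted. Together these give $e_{\overline G}(V_1,V_2,V_3)\le\varepsilon n^2$.

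For the part sizes, set $|V_i|=\frac n3+a_i$ with $\sum a_i=0$. Then
\[
\sum_{i<j}|V_i||V_j|=\frac{n^2}{3}-\frac12\sum_i a_i^2 .
\]
This product count is at least $e_G(V_1,V_2,V_3)\ge e(T_{n,3})-\varepsilon n^2\ge \frac{n^2}{3}-\frac38-\varepsilon n^2$. Hence $\sum a_i^2\le 2\varepsilon n^2+1$, which gives $|a_i|\le \sqrt{2\varepsilon}\,n+1\le 2\sqrt\varepsilon\, n$ for large $n$.

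The main point needing attention is the bookkeeping of the $\varepsilon n^2$ errors. We must make sure that the maximizing partition inherits \emph{both} bounds with constant exactly $\varepsilon$, rather than $2\varepsilon$. This works because both bounds can be compared against $\{U_i\}$ separately, as above.
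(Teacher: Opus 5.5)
Your proposal is correct and follows the paper's argument: apply the Erd\H{o}s--Simonovits stability lemma to get a balanced partition $\{U_1,U_2,U_3\}$, pass to a partition $\{V_1,V_2,V_3\}$ maximizing $e_G(V_1,V_2,V_3)$, and compare with $\{U_1,U_2,U_3\}$ separately for the internal edges and for the missing crossing edges. The differences are cosmetic. You derive $e(G)\ge e(T_{n,3})-\varepsilon n^2$ directly from \eqref{eq1}, and you bound the part sizes through $\sum_{i<j}|V_i||V_j|=\frac{n^2}{3}-\frac12\sum_i a_i^2\ge e_G(V_1,V_2,V_3)$, whereas the paper uses an upper bound on $e(G)$ in terms of the largest deviation $s$; both yield the same $2\sqrt{\varepsilon}\,n$ bound.
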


\begin{proof}
As $\chi(W_{2k+2})=4,$ it follows from \eqref{eq1} and Lemma \ref{lem2.3} that
\begin{eqnarray*}
e(G)&\geq& e(T_{n,3})-\left(n-\left\lceil\frac{n}{3}\right\rceil\right)+2\\
&\geq& \left(\frac{n^2}{3}-\frac{3}{8}\right)-\left(n-\left\lceil\frac{n}{3}\right\rceil\right)+2\\
&\geq& \left(\left(1-\frac{1}{3}\right)-\frac{4}{3n}+\frac{13}{4n^2}\right)\frac{n^2}{2}.
\end{eqnarray*}
By Lemma \ref{lem2.1}, for any $\varepsilon>0$ and sufficiently large $n,$ $G$ can be obtained from $T_{n,3}$ by adding and deleting at most $\varepsilon n^{2}$ edges, which implies that
\begin{eqnarray}\label{eq3}
e(G)\geq e(T_{n,3})-\varepsilon n^{2}
\end{eqnarray}
and there exists a partition $\{U_1, U_2, U_3\}$ of $V(G)$ with $\big\lfloor\frac{n}{3}\big\rfloor\leq |U_i|\leq \lceil\frac{n}{3}\rceil$ such that $\sum^3_{i=1}e(G[U_i])\leq \varepsilon n^{2}$ and $e_G(U_1,U_2,U_3)\geq e(T_{n,3})-\varepsilon n^{2}.$ Choose a partition $\{V_1, V_2, V_3\}$ of $V(G)$ such that $e_G(V_1,V_2,V_3)$ attains the maximum among $e_G(W_1,W_2,W_3)$ for all partitions $\{W_1, W_2, W_3\}$ of $V(G)$. Then
\begin{eqnarray}\label{eq4}
\sum^3_{i=1}e(G[V_i])\leq \sum^3_{i=1}e(G[U_i])\leq \varepsilon n^{2}.
\end{eqnarray}
Since $e_G(V_1,V_2,V_3)\geq e_G(U_1,U_2,U_3)\geq e(T_{n,3})-\varepsilon n^{2},$ we have $e_{\overline{G}}(V_1,V_2,V_3)\leq\varepsilon n^{2}.$
Let $s=\max\Big\{\big||V_i|-\frac{n}{3}\big|: 1 \le i \le 3\Big\}.$ Without loss of generality, we may assume that $\big||V_1|-\frac{n}{3}\big|=s,$ which implies that $|V_1|=\frac{n}{3}+s$ or $\frac{n}{3}-s.$ It follows from \eqref{eq4} that
\begin{eqnarray}\label{eq5}
e(G) &=& \sum_{1\leq i<j\leq 3}e_G(V_i,V_j)+\sum^3_{i=1}e(G[V_i])\nonumber\\
&\leq& |V_1|(n-|V_1|)+|V_2||V_3|+\varepsilon n^{2}\nonumber\\
&=& |V_1|(n-|V_1|)+\frac{1}{2}\big((|V_2|+|V_3|)^2-(|V_2|^2+|V_3|^2)\big)+\varepsilon n^{2}\nonumber\\
&\leq& |V_1|(n-|V_1|)+\frac{1}{2}\big((|V_2|+|V_3|)^2-\frac{1}{2}(|V_2|+|V_3|)^2\big)+\varepsilon n^{2}\nonumber\\
&=& |V_1|(n-|V_1|)+\frac{1}{4}(n-|V_1|)^2+\varepsilon n^{2}\nonumber\\
&=& \frac{n^2}{3}-\frac{3}{4}s^2+\varepsilon n^{2}.
\end{eqnarray}
On the other hand, by \eqref{eq3} and Lemma \ref{lem2.3}, we have
\begin{eqnarray}\label{eq6}
e(G)&\geq& e(T_{n,3})-\varepsilon n^{2}\geq \frac{n^2}{3}-\varepsilon n^{2}-\frac{3}{8}\geq \frac{n^2}{3}-2\varepsilon n^{2}
\end{eqnarray}
for sufficiently large $n.$ Combining \eqref{eq5} and \eqref{eq6}, we obtain that $s\leq 2\sqrt{\varepsilon}n.$ Hence
\begin{eqnarray*}
\left(\frac{1}{3}-2\sqrt{\varepsilon}\right)n\leq |V_i|\leq \left(\frac{1}{3}+2\sqrt{\varepsilon}\right)n
\end{eqnarray*}
for $i \in \{1,2,3\}$.
\end{proof}

\begin{lemma}\label{lem3.2}
For $i \in \{1,2,3\}$ and $x\in V_i$, we have $d_G(x,V_i)<2\sqrt{\varepsilon}n.$
\end{lemma}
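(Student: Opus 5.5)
We argue by the maximality of the partition $\{V_1,V_2,V_3\}$ chosen in Lemma \ref{lem3.1}. Fix $i$ and $x\in V_i$, and suppose for contradiction that $d_G(x,V_i)\geq 2\sqrt{\varepsilon}n$. The first step is the standard local-optimality observation: moving $x$ from $V_i$ to another part $V_j$ changes $e_G(V_1,V_2,V_3)$ by $d_G(x,V_i)-d_G(x,V_j)$. Since the partition maximizes this quantity, we get $d_G(x,V_j)\geq d_G(x,V_i)$ for every $j\neq i$. Hence
$$
d_G(x,V_j)\geq d_G(x,V_i)\geq 2\sqrt{\varepsilon}n \quad\text{for all } j\in\{1,2,3\}.
$$
In words, $x$ has many neighbours in each of the three parts.

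The second step embeds $W_{2k+2}$ with centre $x$. We need a cycle of length $2k+1$ inside $N_G(x)$. Set $A_j=N_G(x,V_j)$, each of size at least $2\sqrt{\varepsilon}n$. Call a vertex of $A_j$ \emph{bad} if it has at least $\sqrt{\varepsilon}n/(\text{const}\cdot k)$ non-neighbours in some other part $V_l$. Since $e_{\overline G}(V_1,V_2,V_3)\leq\varepsilon n^2$, the number of bad vertices is $O(k\sqrt{\varepsilon}n)$. By \eqref{eq2} ($96k\sqrt{\varepsilon}<1$) this is small compared with $|A_j|$, once the constants are tuned, so each $A_j$ keeps a large set $A_j'$ of good vertices. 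Any few good vertices taken from two different $A_j'$ then have common neighbourhoods that are nearly everything in the third part; this can be quantified by Lemma \ref{lem2.2} applied to the neighbourhood sets intersected with the relevant $A_l$.

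With this we build the odd cycle greedily. We start with a triangle $a_1a_2a_3$, $a_j\in A_j'$; a good $a_1$ and a good $a_2$ are adjacent to most of $A_2'$ and $A_3'$ respectively, so such a triangle exists. We then extend it by an alternating path between $A_1'$ and $A_2'$ to get a cycle $a_3,a_1,b_2,b_1,\dots,a_2,a_3$ of length $2k+1$. For instance, take $C=a_3 u_1 u_2\cdots u_{2k}$ with $u_{\text{odd}}\in A_1'$, $u_{\text{even}}\in A_2'$, and $a_3$ adjacent to $u_1$ and $u_{2k}$. At each step we choose the next vertex among the common good neighbours of the previous vertex (and of $a_3$ at the end) not yet used. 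There are at least $2\sqrt{\varepsilon}n-O(k\sqrt{\varepsilon}n/(96k))-2k>0$ choices. This gives $C_{2k+1}\subseteq G[N_G(x)]$, hence $W_{2k+2}\subseteq G$, a contradiction.

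The main obstacle is the bookkeeping in the second step. We must make sure that the non-neighbours inside $A_j$ of a vertex are few relative to $|A_j|=\Theta(\sqrt{\varepsilon}n)$, not merely relative to $n$. This is why the threshold for \emph{bad} must be of order $\sqrt{\varepsilon}n/k$ and why the constant $96k$ appears in \eqref{eq2}. I would first try the averaging bound that at most $\varepsilon n^2/(c\sqrt{\varepsilon}n/k)=k\sqrt{\varepsilon}n/c$ vertices are bad, and choose $c$ so that the bad count is below $\sqrt{\varepsilon}n$.
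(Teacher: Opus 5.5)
\textbf{There is a genuine gap, in your second step.} Your first step is fine and is exactly the paper's: maximality of $e_G(V_1,V_2,V_3)$ gives $d_G(x,V_j)\ge d_G(x,V_i)\ge 2\sqrt{\varepsilon}n$ for every $j$.

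The problem in the second step is one of scale, and no tuning of constants fixes it.
\begin{itemize}
\item The sets $A_j$ have size only about $m:=\lceil 2\sqrt{\varepsilon}n\rceil$, so $|A_j||A_l|\approx 4\varepsilon n^2$.
\item The only information you use is $e_{\overline G}(V_1,V_2,V_3)\le\varepsilon n^2\le m^2/4$. So the missing cross pairs may be a constant fraction, up to a quarter, of all pairs between $A_j$ and $A_l$.
\item If ``bad'' means having at least $t$ non-neighbours in another part, the number of bad vertices is bounded only by $2\varepsilon n^2/t$. This is below $|A_j|\approx 2\sqrt{\varepsilon}n$ only when $t>\sqrt{\varepsilon}n\approx m/2$.
\item At such a threshold a good vertex may miss half of $A_l$. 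Then a good $a_1\in A_1'$ and a good $a_2\in A_2'$ need not have any common neighbour in $A_3'$, so neither your triangle nor your greedy extension is justified.
\end{itemize}
Your last paragraph already shows the conflict: forcing $k\sqrt{\varepsilon}n/c<\sqrt{\varepsilon}n$ requires $c>k$, i.e.\ a threshold $c\sqrt{\varepsilon}n/k>\sqrt{\varepsilon}n$. Condition \eqref{eq2} cannot rescue this, because $96k\sqrt{\varepsilon}<1$ compares $k\sqrt{\varepsilon}n$ with $n$, not with $\sqrt{\varepsilon}n$. So the count $2\sqrt{\varepsilon}n-O(\sqrt{\varepsilon}n/96)-2k>0$ is unsupported.

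Concretely, suppose every vertex of $A_1\cup A_2\cup A_3$ misses about $m/12$ vertices of each of the other two sets. That is about $m^2/4\le\varepsilon n^2$ non-edges in total, which is permitted. With your threshold $\sqrt{\varepsilon}n/(ck)$ and $ck>6$, every vertex is bad and every $A_j'$ is empty.

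What works is a global density count rather than a vertex-by-vertex cleaning. The paper takes $N_i\subseteq A_i$ with $|N_i|=m$ and sets $F=G[N_1\cup N_2\cup N_3]$. Then $e(F)\ge 3m^2-\varepsilon n^2\ge \frac{11}{4}m^2>\frac{(3m)^2}{4}$, and Theorem \ref{th1.3} yields $C_{2k+1}\subseteq F\subseteq G[N_G(x)]$.

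If you want to keep a hands-on embedding, replace the good/bad dichotomy by averaging:
\begin{itemize}
\item Some $a_3\in N_3$ misses at most $m/4$ vertices of $N_1\cup N_2$.
\item Hence $X=N_G(a_3)\cap N_1$ and $Y=N_G(a_3)\cap N_2$ each have size at least $3m/4$.
\item The cross edges between $X$ and $Y$ number at least $\frac{9}{16}m^2-\frac14m^2=\frac{5}{16}m^2$.
\item This bipartite graph has a subgraph of minimum degree at least $5m/32\ge 2k$, which contains a path on $2k$ vertices.
\item That path alternates between $X$ and $Y$, so its ends are in $X$ and $Y$. Closing it through $a_3$ gives the required $C_{2k+1}$ in $N_G(x)$.
\end{itemize}
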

\begin{proof}
Without loss of generality, suppose to the contrary that there exists a vertex $u_0 \in V_1$ such that $d_G(u_0,V_1) \geq 2\sqrt{\varepsilon}n.$ Recall that the partition $\{V_1, V_2, V_3\}$ of $V(G)$ maximizes $e_G(U_1,U_2,U_3)$ among all partitions $\{U_1, U_2, U_3\}$ of $V(G)$. Then $d_G(u_0,V_i) \geq d_G(u_0,V_1)\geq 2\sqrt{\varepsilon}n$ for each $i\in \{2,3\}.$ Let $N_i\subseteq N_G(u_0,V_i)$ be a subset of size $|N_i|=\big\lceil 2\sqrt{\varepsilon}n \big\rceil$ for each $i\in \{1,2,3\}$ and let $F=G[N_1\cup N_2\cup N_3].$
Then $|V(F)|= 3\lceil 2\sqrt{\varepsilon}n\rceil \geq 4k-2$ for large enough $n.$ By Lemma \ref{lem3.1}, $e_{\overline{G}}(V_1,V_2,V_3)\leq \varepsilon n^{2}.$ Hence
\begin{eqnarray}\label{eq7}
e(F) \geq 3\big\lceil 2\sqrt{\varepsilon}n\big\rceil^2-\varepsilon n^2 \geq 11\varepsilon n^2>\frac{|V(F)|^2}{4}\geq e(T_{|V(F)|,2})
\end{eqnarray}
for sufficiently large $n.$ Note that $C_{2k+1}$ is color-critical with $\chi(C_{2k+1})=3.$ Combining \eqref{eq7} and Theorem \ref{th1.3}, we obtain that $F$ must contain a copy of $C_{2k+1}.$ Since $V(F) \subseteq N_G(u_0),$ $u_0$ and this odd cycle form a copy of $W_{2k+2}$ in $G,$ which contradicts the fact that $G$ is $W_{2k+2}$-free.
\end{proof}

Define
$$
L=\left\{v\in V(G): d_G(v)\leq \left(\frac{2}{3}-5\sqrt{\varepsilon}\right)n\right\}.
$$
The next lemma gives an upper bound for the size of $L.$

\begin{lemma}\label{lem3.3}
$|L|< \sqrt{\varepsilon}n.$
\end{lemma}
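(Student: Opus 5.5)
We argue by contradiction, using a double count of the non-edges across the partition $\{V_1,V_2,V_3\}$ from Lemma \ref{lem3.1}. Suppose $|L|\geq \sqrt{\varepsilon}n$, and fix a subset $L'\subseteq L$ with $|L'|=\lceil\sqrt{\varepsilon}n\rceil$. For a vertex $v\in V_i$, we will write its degree as
$$
d_G(v)=d_G(v,V_i)+\sum_{j\neq i}d_G(v,V_j).
$$
By Lemma \ref{lem3.2}, the first term satisfies $d_G(v,V_i)<2\sqrt{\varepsilon}n$. For $v\in L'$ this gives
$$
\sum_{j\ne i}d_G(v,V_j) > \sum_{j\ne i}|V_j|-\left(\tfrac{2}{3}-5\sqrt{\varepsilon}\right)n-2\sqrt\varepsilon n \quad\text{fails,}
$$
so we instead bound the cross non-degree directly. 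Since $d_G(v)\le(\frac23-5\sqrt\varepsilon)n$, the number of non-neighbours of $v$ in $\bigcup_{j\ne i}V_j$ is at least
$$
(n-|V_i|)-d_G(v) \;\ge\; \left(\tfrac{2}{3}-2\sqrt\varepsilon\right)n-\left(\tfrac23-5\sqrt\varepsilon\right)n \;=\; 3\sqrt{\varepsilon}n .
$$
Here we used $|V_i|\le(\frac13+2\sqrt\varepsilon)n$ from Lemma \ref{lem3.1}. Counting the whole degree, rather than only the cross degree, is in fact better here.

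Next we sum this lower bound over $v\in L'$. A non-edge between two vertices of $L'$ lying in different parts is counted at most twice, so
$$
e_{\overline G}(V_1,V_2,V_3)\;\ge\;\tfrac12|L'|\cdot 3\sqrt\varepsilon n\;\ge\;\tfrac32\varepsilon n^2 .
$$
This contradicts the bound $e_{\overline G}(V_1,V_2,V_3)\le\varepsilon n^2$ from Lemma \ref{lem3.1}. A sharper count avoids the factor $\frac12$: each $v\in L'$ has at most $|L'|$ non-neighbours inside $L'$, so at least $3\sqrt\varepsilon n-\lceil\sqrt\varepsilon n\rceil\ge(2-o(1))\sqrt\varepsilon n$ of its cross non-neighbours lie outside $L'$. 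Those non-edges are counted exactly once, which gives about $2\varepsilon n^2>\varepsilon n^2$. Either count yields the desired contradiction, so $|L|<\sqrt\varepsilon n$.

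The main thing to check is the constants: $5\sqrt\varepsilon$ in the definition of $L$ minus $2\sqrt\varepsilon$ from the part sizes must leave enough slack, including the ceiling, to beat $\varepsilon n^2$ after the double-counting loss. As computed above, the slack is $3\sqrt\varepsilon n$ per vertex, and this suffices for large $n$. We do not need Lemma \ref{lem3.2} in this argument.
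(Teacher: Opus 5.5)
Your argument is correct, and it takes a genuinely different route from the paper's. The paper deletes $\lfloor\sqrt{\varepsilon}n\rfloor$ vertices of $L$ and shows that what remains has more than $e(T_{n-|L'|,3})$ edges, using only the global bound $e(G)\ge e(T_{n,3})-\varepsilon n^2$. It then invokes Simonovits' theorem (Theorem~\ref{th1.7}) to find a $W_{2k+2}$, so it never uses the partition $\{V_1,V_2,V_3\}$. You instead use only the structural output of Lemma~\ref{lem3.1}. Since $n-|V_i|\ge(\frac23-2\sqrt{\varepsilon})n$, every $v\in L\cap V_i$ has at least $3\sqrt{\varepsilon}n$ non-neighbours outside $V_i$. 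Summing, with each cross non-edge counted at most twice, gives $e_{\overline G}(V_1,V_2,V_3)\ge\frac32\varepsilon n^2$, contradicting the bound $\varepsilon n^2$. Your version is more elementary: it needs no second appeal to the exact Tur\'an number and no check that $n-|L'|\ge n_0(W_{2k+2})$. If you sum over all of $L$ rather than a subset, it gives $\frac32\sqrt{\varepsilon}n|L|\le\varepsilon n^2$, i.e.\ the stronger bound $|L|\le\frac23\sqrt{\varepsilon}n$ directly, with no contradiction argument. The paper's version, in turn, is independent of the partition and depends only on the edge count. One cleanup: delete the aborted display marked ``fails'' and the opening appeal to Lemma~\ref{lem3.2}. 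As you note at the end, neither is used, and as written they make the proof look inconsistent.
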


\begin{proof}
Suppose to the contrary that $|L|\geq\sqrt{\varepsilon}n.$ Then there exists a subset $L'\subseteq L$ with $|L'|=\lfloor\sqrt{\varepsilon}n\rfloor.$ Let $n' = n-|L'|.$
By \eqref{eq3} and Lemma \ref{lem2.3}, we have
\begin{eqnarray*}
e(G[V(G)\setminus L'])&\geq& e(G)-\sum_{v\in L'}d_G(v)\\
&\geq& e(T_{n,3})-\varepsilon n^{2}-\sqrt{\varepsilon}n^2 \left(\frac{2}{3}-5\sqrt{\varepsilon}\right)\\
&\geq& \frac{n^2}{3}+4\varepsilon n^{2}-\frac{2}{3}\sqrt{\varepsilon}n^2-\frac{3}{8}\\
&>& \frac{1}{3}\Big(n-\Big\lfloor\sqrt{\varepsilon}n\Big\rfloor\Big)^2 \geq e(T_{n',3})
\end{eqnarray*}
for sufficiently large $n.$ Note that $W_{2k+2}$ is color-critical and $\chi(W_{2k+2})=4.$ By Theorem \ref{th1.7}, there exists an integer $n_0(W_{2k+2})$ such that $\mathrm{ex}(n', W_{2k+2})=e(T_{n',3})$ for $n'\geq n_0(W_{2k+2}).$ Since $e(G[V(G)\setminus L'])>e(T_{n',3})$, $G[V(G)\setminus L']$ must contain a copy of $W_{2k+2},$ a contradiction.
\end{proof}

\begin{lemma}\label{lem3.4}
For $i \in \{1,2,3\}$, we have $e(G[V_i\setminus L])=0.$
\end{lemma}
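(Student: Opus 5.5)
Suppose to the contrary that, say, $V_1\setminus L$ contains an edge $uv$; we will build a copy of $W_{2k+2}$ with center $u$ and rim an odd cycle of length $2k+1$ inside $N_G(u)$, which contradicts the choice of $G$. The first step is to show that both $u$ and $v$ have many neighbours in $V_2$ and in $V_3$. Since $u\notin L$, $d_G(u)>(\frac23-5\sqrt{\varepsilon})n$. By Lemma \ref{lem3.2}, $d_G(u,V_1)<2\sqrt{\varepsilon}n$, and by Lemma \ref{lem3.1}, $|V_j|\le(\frac13+2\sqrt{\varepsilon})n$. Hence for $j\in\{2,3\}$,
$$d_G(u,V_j)> \Big(\tfrac23-5\sqrt{\varepsilon}\Big)n-2\sqrt{\varepsilon}n-\Big(\tfrac13+2\sqrt{\varepsilon}\Big)n=\Big(\tfrac13-9\sqrt{\varepsilon}\Big)n,$$
so $u$ misses at most $11\sqrt{\varepsilon}n$ vertices of each $V_j$. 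The same bound holds for $v$.

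Next we find many common neighbours of $u$ and $v$ in each of $V_2$ and $V_3$. Applying Lemma \ref{lem2.2} to $N_G(u,V_j)$ and $N_G(v,V_j)$ gives $|N_G(u,V_j)\cap N_G(v,V_j)|\ge (\frac13-20\sqrt{\varepsilon})n$ for $j=2,3$. Choose $A\subseteq N_G(u,V_2)\cap N_G(v,V_2)$ and $B\subseteq N_G(u,V_3)\cap N_G(v,V_3)$, each of size $k$. The aim is a path $a_1b_1a_2b_2\cdots a_kb_k$ alternating between $V_2$ and $V_3$, with all $a_i\in N(u)\cap N(v)\cap V_2$ and $b_i\in N(u)\cap N(v)\cap V_3$. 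Closing it through $v$ (note $v\in N(u)$, $v$ adjacent to $a_1$ and $b_k$) gives the cycle $v a_1 b_1\cdots a_k b_k v$ of length $2k+1$ inside $N_G(u)$. Together with $u$ this is a $W_{2k+2}$.

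Constructing this path is the main obstacle. It requires the bipartite graph $G[C_2,C_3]$ to contain an alternating path on $2k$ vertices, where $C_j$ is the set of common neighbours of $u$ and $v$ in $V_j$. Since $e_{\overline G}(V_2,V_3)\le\varepsilon n^2$ while $|C_2||C_3|\ge(\frac13-20\sqrt{\varepsilon})^2n^2$, the graph $G[C_2,C_3]$ has edge density close to $1$. I would argue greedily: discard the vertices of $C_2\cup C_3$ that have more than $\sqrt{\varepsilon}n$ non-neighbours across; by the non-edge bound at most $2\sqrt{\varepsilon}n$ vertices are discarded. Every remaining vertex is then adjacent to all but $O(\sqrt{\varepsilon}n)$ of the remaining vertices on the other side, so a path of any fixed length $2k$ can be built step by step. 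Alternatively, one can apply Theorem \ref{th1.3} directly to $G[\{v\}\cup C_2\cup C_3]$: the subgraph is dense enough to beat $n^2/4$, since $v$'s edges plus near-completeness give many edges, though one must check this exceeds $e(T_{m,2})$. The greedy path argument is cleaner, and the choice \eqref{eq2} of $\varepsilon$ (with $96k\sqrt{\varepsilon}<1$) guarantees enough room at each of the $2k$ steps.
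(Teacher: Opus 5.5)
Your argument is correct, but it takes a different route from the paper.

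\textbf{The paper's route.} The paper never builds the wheel by hand. From the edge $u_1u_2$ it takes $n_2=\max\{k+1,\lceil n_0(W_{2k+2})/3\rceil\}$ vertices of $V_1\setminus L$. It then takes $n_2$ of their common neighbours in $V_2\setminus L$, and then $n_2$ common neighbours in $V_3$ of all $2n_2$ chosen vertices. All of this uses Lemma~\ref{lem2.2} and the degree bounds for vertices outside $L$. The result is a copy of $T_{3n_2,3}$ plus one edge, and Simonovits' theorem (Theorem~\ref{th1.7}) is invoked as a black box on this fixed-size graph. That is why $n_0(W_{2k+2})$ enters the choice \eqref{eq2} of $\varepsilon$.

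\textbf{Your route.} You exhibit the wheel directly, with center $u$ and rim $v a_1 b_1\cdots a_k b_k v$. The alternating $V_2$--$V_3$ path is found greedily among common neighbours of $u$ and $v$. Before that you discard the at most $2\sqrt{\varepsilon}n$ vertices with more than $\sqrt{\varepsilon}n$ cross non-neighbours; this count is right, by $e_{\overline{G}}(V_2,V_3)\le\varepsilon n^2$ from Lemma~\ref{lem3.1}. The surviving sets have size about $(\frac13-21\sqrt{\varepsilon})n$, and each surviving vertex misses at most $\sqrt{\varepsilon}n$ vertices of the other side. So each of the $2k$ greedy steps has room once $n$ is large.

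\textbf{Comparison.} Your argument is elementary and self-contained, avoids Theorem~\ref{th1.7}, and at this step needs $\varepsilon$ only below an absolute constant. It is the same alternating-path mechanism the paper itself uses later in Lemma~\ref{lem3.11}. It also shows that $K_{2,k,k}$ plus an edge inside the part of size $2$ already contains $W_{2k+2}$, so the detour through $n_0$ is unnecessary here. The paper's version is shorter to write once Theorem~\ref{th1.7} is available.

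\textbf{Two small points.} First, the sets $A,B$ should be the output of the greedy procedure, not fixed in advance, since a fixed pair of $k$-sets need not span a Hamiltonian path. Second, your aside about applying Theorem~\ref{th1.3} to $G[\{v\}\cup C_2\cup C_3]$ would not go through. With $m=|C_2|+|C_3|+1$, the lower bound you can certify is only about $|C_2||C_3|-\varepsilon n^2+|C_2|+|C_3|\le \frac{m^2}{4}+\frac{m}{2}-\varepsilon n^2$. This is below $\lfloor m^2/4\rfloor$, because the $\varepsilon n^2$ possible missing edges swamp the $O(n)$ edges at $v$. The greedy argument is what carries the proof.
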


\begin{proof}
Without loss of generality, suppose to the contrary that $e(G[V_1\setminus L])\geq 1.$ Let $u_1u_2$ be an edge of $G[V_1\setminus L]$ and let $n_2$ be a fixed integer such that $n_2=\max\left\{k+1, \Big\lceil\frac{n_0(W_{2k+2})}{3}\Big\rceil\right\}.$ By Lemmas \ref{lem3.1}, \ref{lem3.3} and \eqref{eq2}, we have
\begin{eqnarray*}
|V_1\setminus L|\geq |V_1| - |L|\geq \left(\frac{1}{3}-3\sqrt{\varepsilon}\right)n>n_2
\end{eqnarray*}
for sufficiently large $n.$ So we can choose $n_2-2$ vertices $u_3, u_4, \ldots, u_{n_2}$ from $V_1 \setminus (L \cup \{u_1, u_2\}).$ Since none of the vertices $u_1, \ldots, u_{n_2}$ belong to $L,$ we have $d_G(u_i)>\big(\frac{2}{3}-5\sqrt{\varepsilon}\big)n$ for any $i\in \{1,2,\ldots,n_2\}.$ By Lemma \ref{lem3.2}, $d_G(u_i,V_1)<2\sqrt{\varepsilon}n.$
It follows from Lemma \ref{lem3.1} and \eqref{eq2} that
\begin{eqnarray}\label{eq8}
d_G(u_i,V_j)&\geq& d_G(u_i)-d_G(u_i,V_1)-(n-|V_1|-|V_j|)\nonumber\\
&>& \left(\frac{2}{3}-5\sqrt{\varepsilon}\right)n-2\sqrt{\varepsilon}n-\left(\frac{1}{3}+2\sqrt{\varepsilon}\right)n\nonumber\\
&=& \left(\frac{1}{3}-9\sqrt{\varepsilon}\right)n,
\end{eqnarray}
where $j\in \{2,3\}$ and $i\in \{1,2,\ldots,n_2\}.$ Next we consider the common neighbors of $u_1, \ldots, u_{n_2}$ in $V_2 \setminus L.$
Combining Lemmas \ref{lem2.2}, \ref{lem3.1} and \ref{lem3.3}, \eqref{eq8} and \eqref{eq2}, we have
\begin{eqnarray*}
\Bigg|\Big(\bigcap_{i\in\{1,\ldots,n_2\}}N_G(u_i,V_2)\Big)\setminus L\Bigg|&\geq& \sum^{n_2}_{i=1}d_G(u_i,V_2)-(n_2-1)|V_2|-|L|\\
&>& n_2\left(\frac{1}{3}-9\sqrt{\varepsilon}\right)n-(n_2-1)\left(\frac{1}{3}+2\sqrt{\varepsilon}\right)n-\sqrt{\varepsilon}n\\
&=& \left(\frac{1}{3}-11\sqrt{\varepsilon}n_2+\sqrt{\varepsilon}\right)n>n_2
\end{eqnarray*}
for sufficiently large $n.$ Hence there exist $n_2$ vertices $u'_1, u'_2, \ldots, u'_{n_2} \in \big(\bigcap_{i\in\{1,\ldots,n_2\}}N_G(u_i,V_2)\big)\setminus L.$ Based on the technique above, we have
\begin{eqnarray}\label{eq9}
d_G(u'_i,V_3)\geq d_G(u'_i)-d_G(u'_i,V_2)-|V_1|> \left(\frac{1}{3}-9\sqrt{\varepsilon}\right)n
\end{eqnarray}
for each $i\in \{1,2,\ldots,n_2\}.$ Combining Lemmas \ref{lem2.2}, \ref{lem3.1} and \ref{lem3.3}, \eqref{eq8}, \eqref{eq9} and \eqref{eq2}, we have
\begin{eqnarray*}
& &\left|\left(\bigcap_{i\in\{1,\ldots,n_2\}}N_G(u_i,V_3)\right)\cap \left(\bigcap_{i\in\{1,\ldots,n_2\}}N_G(u'_i,V_3)\right)\right|\\
&\geq& \sum^{n_2}_{i=1}d_G(u_i,V_3)+\sum^{n_2}_{i=1}d_G(u'_i,V_3)-(2n_2-1)|V_3|\\
&>& 2n_2\left(\frac{1}{3}-9\sqrt{\varepsilon}\right)n-(2n_2-1)\left(\frac{1}{3}+2\sqrt{\varepsilon}\right)n\\
&=& \left(\frac{1}{3}-22\sqrt{\varepsilon}n_2+2\sqrt{\varepsilon}\right)n>n_2
\end{eqnarray*}
for sufficiently large $n.$ So there exist $n_2$ vertices $u''_1,u''_2,\ldots, u''_{n_2} \in \big(\bigcap_{i\in\{1,\ldots,n_2\}}N_G(u_i,V_3)\big)\cap \big(\bigcap_{i\in\{1,\ldots,n_2\}}N_G(u'_i,V_3)\big),$ and hence $\{u_1,\ldots,u_{n_2},u'_1,\ldots,u'_{n_2},u''_1,\ldots,u''_{n_2}\}$ induces a subgraph which contains a copy of $T'_{3n_2,3},$ where $T'_{3n_2,3}$ denotes the graph obtained by adding exactly one edge to $T_{3n_2,3}.$ By Theorem \ref{th1.7}, $T'_{3n_2,3}$ contains a copy of $W_{2k+2}.$ This implies that $G$ contains a copy of $W_{2k+2},$ a contradiction.
\end{proof}

\begin{lemma}\label{lem3.5}
For any $w\in L$ and $a \in \{1,2,3\}$, let $G'=G-\{wx:x\in N_G(w)\}+\{wx:x\in V(G)\setminus(V_a\cup L)\}.$ Then the following hold:
\begin{enumerate}[\rm (i)]
	\item $G'$ is a $3$-partite $W_{2k+2}$-free graph;
	\item for any $1 \le i \le 3$ and $u_1,u_2\in V_i\setminus L,$ $u_1$ and $u_2$ belong to the same part of $G',$ that is, there exists a partition $\{V'_1, V'_2, V'_3\}$ of $V(G')$ such that $\sum^3_{i=1}e(G'[V'_i])=0$ and $V_i\setminus L\subseteq V'_i.$
\end{enumerate}
\end{lemma}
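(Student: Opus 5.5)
The idea is to show that $G'$ has strictly more edges than $G$ and is still $W_{2k+2}$-free. Since $|V(G')|=n$ and $G$ is extremal among non-$3$-partite $W_{2k+2}$-free graphs, this forces $G'$ to be $3$-partite. Part (ii) will then follow from a triangle argument on the high-degree vertices.

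\emph{Edge count.} Since $w\in L$, we have $d_G(w)\le(\frac23-5\sqrt{\varepsilon})n$. After the modification,
\[
d_{G'}(w)=n-|V_a|-|L\setminus V_a|\geq n-\Big(\tfrac13+2\sqrt{\varepsilon}\Big)n-\sqrt{\varepsilon}n=\Big(\tfrac23-3\sqrt{\varepsilon}\Big)n,
\]
using Lemmas \ref{lem3.1} and \ref{lem3.3}. The vertex $w\in L$ itself is excluded from its new neighbourhood, so $G'$ is simple. Hence $e(G')>e(G)$.

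\emph{$G'$ is $W_{2k+2}$-free.} Suppose $G'$ contains a copy $W$ of $W_{2k+2}$. Since $G'-w=G-w$ is $W_{2k+2}$-free, $w\in V(W)$. Let $\{b,c\}=\{1,2,3\}\setminus\{a\}$.

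If $w$ is the center, the rim is an odd cycle inside $N_{G'}(w)=(V_b\cup V_c)\setminus L$. By Lemma \ref{lem3.4}, $G[(V_b\cup V_c)\setminus L]$ is bipartite with parts $V_b\setminus L$ and $V_c\setminus L$, so it has no odd cycle. This is a contradiction.

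If $w$ lies on the rim, then $w$ has exactly three neighbours in $W$: the center and two rim vertices. All three lie in $(V_b\cup V_c)\setminus L$. Each of them has degree greater than $(\frac23-5\sqrt{\varepsilon})n$ in $G$. The same computation as \eqref{eq8} (with Lemma \ref{lem3.2}) gives each at least $(\frac13-9\sqrt{\varepsilon})n$ neighbours in $V_a$. Lemma \ref{lem2.2} then gives them at least $(\frac13-31\sqrt{\varepsilon})n$ common neighbours in $V_a$. Removing $L$ and the $2k+2$ vertices of $W$ still leaves a vertex $w^*\in V_a\setminus L$ adjacent in $G$ to all three. Replacing $w$ by $w^*$ yields a copy of $W_{2k+2}$ in $G$, a contradiction. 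Hence $G'$ is $W_{2k+2}$-free.

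\emph{Conclusion of (i).} If $G'$ were non-$3$-partite, it would be a non-$3$-partite $W_{2k+2}$-free graph on $n$ vertices with more edges than $G$. This contradicts $G\in \mathrm{Ex}_4(n,W_{2k+2})$. So $G'$ is $3$-partite.

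\emph{Part (ii).} Fix a proper $3$-colouring of $G'$. Take $u_1,u_2\in V_1\setminus L$; the other indices are symmetric.

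1. By the common-neighbour counting of Lemma \ref{lem3.4} (Lemma \ref{lem2.2} with \eqref{eq8}), pick $x\in V_2\setminus L$ adjacent to both $u_1$ and $u_2$.
2. Then, using \eqref{eq9} as well, pick $y\in V_3\setminus L$ adjacent to $u_1$, $u_2$ and $x$.
3. None of $u_1,u_2,x,y$ equals $w$, since $w\in L$. All the edges used are edges of $G-w$, hence edges of $G'$.

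So $u_1xy$ and $u_2xy$ are triangles in $G'$. Both $u_1$ and $u_2$ must receive the unique colour not used on $\{x,y\}$, so they lie in the same part.

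Thus each $V_i\setminus L$ lies inside a single part $V'_i$. These parts are distinct for different $i$: a triangle $u\,x\,y$ with $u\in V_1\setminus L$, $x\in V_2\setminus L$, $y\in V_3\setminus L$ exists by the same counting, and its three vertices need three different colours. This gives the required partition with $\sum_i e(G'[V'_i])=0$.

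The main technical step is the rim case of the $W_{2k+2}$-free argument. It needs a substitute vertex $w^*$ that avoids all of $W$; this is where the quantitative bounds on $|L|$ and the $V_a$-degrees of non-$L$ vertices are used.
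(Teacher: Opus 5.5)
Your proof is correct and follows the paper's argument essentially step for step. The steps are the edge gain $d_{G'}(w)-d_G(w)\ge 2\sqrt{\varepsilon}n$, the bipartite neighbourhood $(V_b\cup V_c)\setminus L$ ruling out $w$ as center, a substitute vertex in $V_a\setminus(L\cup V(W))$ when $w$ is on the rim, and the two-step common-neighbour argument for (ii). You also make explicit, via a triangle meeting all three $V_i\setminus L$, that these classes fall into distinct parts, which the paper leaves implicit.
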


\begin{proof}
(i) Without loss of generality, we may assume that $a=1.$ We first prove that $G'$ is $W_{2k+2}$-free. Suppose to the contrary that $G'$ contains a copy of $W_{2k+2}.$ If $w$ is not a vertex of this $W_{2k+2},$ then there exists a copy of $W_{2k+2}$ in $G[V(G)\setminus \{w\}].$ Hence $w$ is a vertex of this $W_{2k+2}.$ By Lemma \ref{lem3.4}, we have $e(G[V_i\setminus L])=0.$ Then $G'[N_{G'}(w)]$ is a bipartite graph, which implies that it contains no $C_{2k+1}.$ Therefore, $w$ is not the center vertex of this $W_{2k+2},$ and $w$ lies on the rim $C$ of this $W_{2k+2}.$ Note that $N_{G'}(w)\subseteq (V_2\cup V_3)\setminus L.$ By symmetry, suppose that $v_0\in V_2\setminus L$ is the center vertex of this $W_{2k+2}.$
It follows from Lemma \ref{lem3.4} that $N_{G'}(w)\cap N_{G'}(v_0)\subseteq V_3 \setminus L.$ Let $w_1$ and $w_2$ be the two vertices adjacent to $w$ on $C.$ Next we show that $w_1,$ $w_2$ and $v_0$ share a common neighbor in $V_1 \setminus (L \cup V(C)).$ Note that $w_1, w_2, v_0 \notin L.$ Then $d_G(v_0)>\big(\frac{2}{3}-5\sqrt{\varepsilon}\big)n$ and $d_G(w_i)>\big(\frac{2}{3}-5\sqrt{\varepsilon}\big)n$ for each $i\in \{1,2\}.$ According to Lemma \ref{lem3.2}, we have $d_G(v_0,V_2)<2\sqrt{\varepsilon}n$ and $d_G(w_i,V_3)<2\sqrt{\varepsilon}n$ for each $i\in \{1,2\}.$ It follows from Lemma \ref{lem3.1} that
\begin{eqnarray}\label{eq10}
d_G(v_0,V_1)\geq d_G(v_0)-d_G(v_0,V_2)-|V_3|>\left(\frac{1}{3}-9\sqrt{\varepsilon}\right)n.
\end{eqnarray}
Similarly, we have
\begin{eqnarray}\label{eq11}
d_G(w_i,V_1)\geq \left(\frac{1}{3}-9\sqrt{\varepsilon}\right)n
\end{eqnarray}
for each $i\in \{1,2\}.$ Combining Lemmas \ref{lem2.2}, \ref{lem3.1} and \ref{lem3.3}, \eqref{eq10}, \eqref{eq11} and \eqref{eq2}, we have
\begin{eqnarray*}
& &\big|(N_G(v_0,V_1)\cap N_G(w_1,V_1)\cap N_G(w_2,V_1))\setminus(L\cup V(C))\big|\\
&\geq& d_G(v_0,V_1)+d_G(w_1,V_1)+d_G(w_2,V_1)-2|V_1|-|L|-(2k+1)\\
&>& 3\left(\frac{1}{3}-9\sqrt{\varepsilon}\right)n-2\left(\frac{1}{3}+2\sqrt{\varepsilon}\right)n-\sqrt{\varepsilon}n-(2k+1)\\
&=& \left(\frac{1}{3}-32\sqrt{\varepsilon}\right)n-2k-1>0
\end{eqnarray*}
for large enough $n.$ Let $w'\in (N_G(v_0,V_1)\cap N_G(w_1,V_1)\cap N_G(w_2,V_1))\setminus (L\cup V(C)).$ By exchanging $w$ with $w',$ we obtain a new $W_{2k+2}$ in $G'[V(G') \setminus \{w\}].$ Note that $G'[V(G') \setminus \{w\}]= G[V(G) \setminus \{w\}].$ Then $G$ contains a copy of $W_{2k+2},$ a contradiction.
Hence $G'$ is $W_{2k+2}$-free.

Next we show that $G'$ is a $3$-partite graph. Suppose to the contrary that $G'$ is a non-$3$-partite $W_{2k+2}$-free graph. Combining Lemmas \ref{lem3.1} and \ref{lem3.3} and $w\in L,$ we have
\begin{eqnarray*}
e(G') &=& e(G)-d_G(w)+|V(G)\setminus (V_1\cup L)|\\
&\geq& e(G)-\left(\frac{2}{3}-5\sqrt{\varepsilon}\right)n+n-\left(\frac{1}{3}+2\sqrt{\varepsilon}\right)n-\sqrt{\varepsilon}n\\
&=& e(G)+2\sqrt{\varepsilon}n>e(G),
\end{eqnarray*}
which contradicts the maximality of $e(G).$

(ii) Without loss of generality, we may assume to the contrary that $u_1,u_2\in V_1\setminus L$ in $G,$ but $u_1\in V'_1$ and $u_2\in V'_2$ in $G'.$ According to the definition of $L,$ we have
\begin{eqnarray}\label{eq12}
d_G(u_i)>\left(\frac{2}{3}-5\sqrt{\varepsilon}\right)n
\end{eqnarray}
for $i\in \{1,2\}.$ By Lemmas \ref{lem3.4} and \ref{lem3.3}, we have
\begin{eqnarray}\label{eq13}
d_G(u_i,V_1)\leq |V_1\cap L|\leq \sqrt{\varepsilon}n.
\end{eqnarray}
Combining \eqref{eq12} and \eqref{eq13} and Lemma \ref{lem3.1}, we deduce that
\begin{eqnarray}\label{eq14}
d_G(u_i,V_j) &\geq& d_G(u_i)-d_G(u_i,V_1)-(n-|V_1|-|V_j|)>\left(\frac{1}{3}-8\sqrt{\varepsilon}\right)n
\end{eqnarray}
for $i\in \{1,2\}$ and $j\in \{2,3\}.$ Next we consider the common neighbors of $u_1$ and $u_2$ in $V_2\setminus L.$ It follows from Lemmas \ref{lem2.2} and \ref{lem3.3}, \eqref{eq14} and \eqref{eq2} that
\begin{eqnarray*}
|(N_G(u_1,V_2)\cap N_G(u_2,V_2))\setminus L| &\geq& \sum^2_{i=1} d_G(u_i,V_2)-|V_2|-|L|\\
&>& 2\left(\frac{1}{3}-8\sqrt{\varepsilon}\right)n-\left(\frac{1}{3}+2\sqrt{\varepsilon}\right)n-\sqrt{\varepsilon}n\\
&=& \left(\frac{1}{3}-19\sqrt{\varepsilon}\right)n>0.
\end{eqnarray*}
Let $z\in (N_G(u_1,V_2)\cap N_G(u_2,V_2))\setminus L.$ Similarly to \eqref{eq14}, we have $d_G(z,V_3)>\big(\frac{1}{3}-8\sqrt{\varepsilon}\big)n.$ Combining Lemmas \ref{lem2.2} and \ref{lem3.3}, \eqref{eq14} and \eqref{eq2}, we have
\begin{eqnarray*}
|(N_G(z,V_3)\cap N_G(u_1,V_3)\cap N_G(u_2,V_3))\setminus L| &\geq& \sum^2_{i=1} d_G(u_i,V_3)+d_G(z,V_3)-2|V_3|-|L|\\
&>& \left(\frac{1}{3}-29\sqrt{\varepsilon}\right)n>0.
\end{eqnarray*}
Let $z'\in (N_G(z,V_3)\cap N_G(u_1,V_3)\cap N_G(u_2,V_3))\setminus L.$ Note that the modified vertex $w \in L.$ Then $w\notin \{u_1, u_2, z, z'\}.$ By the construction of $G',$ all edges among $u_1, u_2, z, z'$ in $G$ are preserved in $G'.$ Since $u_1\in V'_1$ and $u_2\in V'_2,$ we have $\{z,z'\}\subset V'_3.$ However, $zz'\in E(G'),$ which contradicts the fact that  $\sum^3_{i=1}e(G'[V'_i])=0.$
\end{proof}

\begin{lemma}\label{lem3.6}
For any $S\subseteq V(G),$ we have
\begin{eqnarray*}
e(G)&\leq& e(K_{|V_1|,|V_2|,|V_3|})-\sum^3_{i=1}\sum_{x\in V_i\cap S}(n-|V_i|)+e(K_{|V_1|,|V_2|,|V_3|}[S])+\sum_{x\in S}d_G(x)\\
& & -e(G[S])+|S\setminus L||L\setminus S|.
\end{eqnarray*}
\end{lemma}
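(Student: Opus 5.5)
Write $K=K_{|V_1|,|V_2|,|V_3|}$ for the complete $3$-partite graph on the partition of Lemma \ref{lem3.1}. The plan is to split the edge set of $G$ according to $S$,
$$
e(G)=e(G\setminus S)+\sum_{x\in S}d_G(x)-e(G[S]),
$$
so that it suffices to prove
$$
e(G\setminus S)\le e(K)-\sum_{i=1}^3\sum_{x\in V_i\cap S}(n-|V_i|)+e(K[S])+|S\setminus L||L\setminus S|.
$$
By the same identity applied to $K$, the quantity $e(K)-\sum_{i}\sum_{x\in V_i\cap S}(n-|V_i|)+e(K[S])$ equals $e(K\setminus S)$, because every vertex $x\in V_i$ has degree $n-|V_i|$ in $K$. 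The statement therefore reduces to the single inequality
$$
e(G\setminus S)\le e(K\setminus S)+|S\setminus L||L\setminus S|,
$$
and the rest of the proof is devoted to it.

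To prove this inequality, I will split the edges of $G\setminus S$ into three classes: edges inside $V(G)\setminus(S\cup L)$, edges with both ends in $L\setminus S$, and edges between $L\setminus S$ and $V(G)\setminus(S\cup L)$. For the first class, Lemma \ref{lem3.4} shows that no edge lies inside any $V_i\setminus L$. Hence these edges are a subset of the edges of $K$ on $V(G)\setminus(S\cup L)$, and their number is at most $e\bigl(K[V(G)\setminus(S\cup L)]\bigr)$.

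For the edges touching $L\setminus S$, the first instinct is to compare each $w\in L$ with the graph $G'$ of Lemma \ref{lem3.5}, but that lemma controls $G'$ rather than $G$, so I do not expect it to be directly usable. A cruder count suffices, and it is what produces the slack term. Each $w\in L\setminus S$ has degree at most $(\frac23-5\sqrt{\varepsilon})n$, whereas in $K$ every vertex has degree at least $n-|V_i|\ge(\frac23-2\sqrt{\varepsilon})n$. I will charge the edges of $G\setminus S$ at $w$ as follows. Edges from $w$ to vertices of $V(G)\setminus(S\cup L)$ are compared with the $K$-edges from $w$ to $V(G)\setminus S$. The only deficit can come from vertices of $S\setminus L$, and for each such vertex it is at most $1$, which gives the total $|S\setminus L||L\setminus S|$.

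More precisely, the plan is to bound $d_{G\setminus S}(w)$ by $d_G(w)$ and $d_{K\setminus S}(w)$ by $n-|V_i|-|S|$. The inequality $d_G(w)\le n-|V_i|$ holds with room $3\sqrt{\varepsilon}n$. Summing over $w\in L\setminus S$, the leftover terms are controlled by $|S||L\setminus S|$, and this is sharpened to $|S\setminus L||L\setminus S|$ by noting that the edges between $L\setminus S$ and $S\cap L$ are counted in neither graph. Edges inside $L\setminus S$ may be double counted, but this is harmless because they are counted once in $\sum_w d$ and subtracted in the same way on the $K$ side.

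I expect the main obstacle to be this bookkeeping for the edges incident to $L\setminus S$. The comparison must be set up per vertex so that the error comes out exactly as $|S\setminus L||L\setminus S|$ and not as something larger such as $|S||L|$. If the degree comparison does not directly give the sharp product, I would fall back on the observation that in $K$ a vertex $w\in L\setminus S$ misses at most the $|S\setminus L|$ vertices of $S\setminus L$ among its potential neighbours outside $L$. Together with $d_G(w)\le n-|V_i|$, this gives the per-vertex surplus $|S\setminus L|$.
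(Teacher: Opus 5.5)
Your reduction to $e(G\setminus S)\le e(K\setminus S)+|S\setminus L||L\setminus S|$ via the degree-sum identity is correct. So is your treatment of the edges inside $V(G)\setminus(S\cup L)$ by Lemma~\ref{lem3.4}. In substance this is the paper's argument: the paper rewires each $x\in(V_i\cap L)\setminus S$ to all of $V(G)\setminus(V_i\cup L)$ and counts the resulting graph $G_1$, which encodes the same per-vertex comparison.

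The gap is in the step you flag yourself, and the three justifications you give there do not work.
\begin{enumerate}[\rm (i)]
\item The sharpening from $|S|$ to $|S\setminus L|$ is not explained by edges between $L\setminus S$ and $S\cap L$ being ``counted in neither graph''. Each vertex of $(S\cap L)\setminus V_i$ still lowers $d_{K\setminus S}(w)$ by one, and nothing on the $G$ side pays for it.
\item Summing $d_{K\setminus S}(w)$ over $w\in L\setminus S$ counts each edge of $K[L\setminus S]$ twice, whereas $e(K\setminus S)$ counts it once. This overestimates the $K$ side, so it is not harmless by itself.
\item Your fallback fails as worded. With only $d_G(w)\le n-|V_i|$, comparing with the $K$-neighbours of $w$ outside $S\cup L$ leaves a per-vertex surplus of up to $|L\setminus V_i|+|S\setminus L|$, not $|S\setminus L|$.
\end{enumerate}

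Each of these errors is at most $|L|$ per vertex. The missing ingredient is Lemma~\ref{lem3.3}: since $|L|<\sqrt{\varepsilon}n$, your room of $3\sqrt{\varepsilon}n$ absorbs them. Concretely, it gives $d_G(w)<|V(G)\setminus(V_i\cup L)|$ for every $w\in V_i\cap L$, which is the paper's \eqref{eq15}.

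With this the count is clean. Let $A=V(G)\setminus(S\cup L)$ and $B=L\setminus S$. For $w\in B\cap V_i$, the set $V(G)\setminus(V_i\cup L)$ is the disjoint union of $A\setminus V_i$ and $(S\setminus L)\setminus V_i$. Hence $d_G(w)<d_K(w,A)+|S\setminus L|$. Summing over $B$,
\[
e(G\setminus S)\le e(K[A])+\sum_{w\in B}d_G(w)\le e(K[A])+e_K(A,B)+|S\setminus L||B|\le e(K\setminus S)+|S\setminus L||L\setminus S|.
\]
The last step simply discards the edges of $K[B]$, so no double counting arises anywhere.
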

\begin{proof}
Combining Lemmas \ref{lem3.1} and \ref{lem3.3}, for any $x\in V_i\cap L,$ we have
\begin{eqnarray}\label{eq15}
d_G(x)\leq \left(\frac23-5\sqrt{\varepsilon}\right)n< |V(G)\setminus (V_i\cup L)|.
\end{eqnarray}
We construct $G_1$ from $G$ by modifying the edges incident to vertices in $L\setminus S$ as follows.
\begin{eqnarray*}
G_1
&=&G-\bigcup_{i=1}^3\bigcup_{x\in (V_i\cap L)\setminus S}
\{xy:y\in N_G(x)\}+\bigcup_{i=1}^3\bigcup_{x\in (V_i\cap L)\setminus S}
\{xy:y\in V(G)\setminus (V_i\cup L)\}.
\end{eqnarray*}
By \eqref{eq15}, $e(G)\leq e(G_1).$ Moreover, $V_i\setminus S$ is independent in $G_1$ for each $i \in \{1,2,3\}$ and $\big(E(G_1[S])\cup E_{G_1}(S,V(G_1)\setminus S)\big)\subseteq \big(E(G[S])\cup E_G(S,V(G)\setminus S)\cup E(K_{S\setminus L, L\setminus S})\big),$ where $K_{S\setminus L, L\setminus S}$ is a complete bipartite graph with bipartition $\{S\setminus L, L\setminus S\}.$ Hence
\begin{eqnarray*}
e(G) &\leq& e(G_1)= e(G_1[V(G)\setminus S])+e(G_1[S])+e_{G_1}(S,V(G)\setminus S)\nonumber\\
&\leq& e(K_{|V_1\setminus S|,|V_2\setminus S|,|V_3\setminus S|})+e(G[S])+e_G(S,V(G)\setminus S)+(|S\setminus L||L\setminus S|)\nonumber\\
&=& e(K_{|V_1|,|V_2|,|V_3|})-\sum^3_{i=1}\sum_{x\in V_i\cap S}(n-|V_i|)+e(K_{|V_1|,|V_2|,|V_3|}[S])
+\sum_{x\in S}d_G(x)\nonumber\\
& & -e(G[S])+|S\setminus L||L\setminus S|.
\end{eqnarray*}
This completes the proof.
\end{proof}

By Lemma \ref{lem3.5}, we deduce the following lemma.

\begin{lemma}\label{lem3.7}
Let $a,b\in \{1,2,3\}$ with $a\neq b.$ Suppose that $uv\in E(G[V_a])$ and $w\in L\setminus\{u,v\}.$ For any $j\in \{1,2,3\},$ let
$$G'=G-\{wx:x\in N_G(w)\}+\{wx:x\in V(G)\setminus(V_j\cup L)\}.$$
Then there exists a partition $\{V'_1, V'_2, V'_3\}$ of $V(G')$ such that $\sum^3_{i=1}e(G'[V'_i])=0$ and $V_i\setminus L\subseteq V'_i$ for each $i\in \{1,2,3\}.$ Moreover, if $u\in V'_b,$ then $N_G(u,V_b)=\{w\}$ and $N_G(u,V_a)=\{v\}.$
\end{lemma}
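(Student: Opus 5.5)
The first assertion is immediate from Lemma \ref{lem3.5}. Apply it with the given $w\in L$ and $a=j$: it shows that $G'$ is a $3$-partite $W_{2k+2}$-free graph. It also supplies a partition $\{V'_1,V'_2,V'_3\}$ of $V(G')$ with $\sum_i e(G'[V'_i])=0$ and $V_i\setminus L\subseteq V'_i$ for every $i$. So the whole work lies in the ``moreover'' part.

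For that part I would first record what $u\in V'_b$ forces in $G'$. Since $w\notin\{u,v\}$, every edge of $G$ at $u$ other than a possible edge $uw$ survives in $G'$. Because $V'_b$ is independent in $G'$ and contains $V_b\setminus L$, we get $N_G(u,V_b\setminus L)\subseteq\{w\}$. Since $uv\in E(G')$, also $v\notin V'_b$. By Lemma \ref{lem3.4} the edge $uv$ inside $V_a$ forces $u\in L$ or $v\in L$.

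Next I would use the maximality of the partition $\{V_1,V_2,V_3\}$ from Lemma \ref{lem3.1}. Moving $u$ from $V_a$ to $V_b$ cannot increase $e_G(V_1,V_2,V_3)$, so $d_G(u,V_b)\ge d_G(u,V_a)\ge 1$. Every neighbour of $u$ in $V_b$ other than $w$ lies in $V_b\cap L$, and $|L|<\sqrt{\varepsilon}n$ by Lemma \ref{lem3.3}. Hence $d_G(u,V_a)\le d_G(u,V_b)\le |L|+1$, and $u$ has few neighbours in $V_a\cup V_b$. To sharpen this to the exact statements $N_G(u,V_b)=\{w\}$ and $N_G(u,V_a)=\{v\}$, I would argue by edge counting against \eqref{eq1}. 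Apply Lemma \ref{lem3.6} with a small set $S$ containing $u$, $v$ and the extra $L$-neighbours of $u$ in $V_a\cup V_b$. An extra neighbour $y$ of $u$ in $V_b\setminus\{w\}$ or in $V_a\setminus\{v\}$ is then an $L$-vertex, which has degree at most $(\frac23-5\sqrt{\varepsilon})n$. Plugging this into Lemma \ref{lem3.6} should give $e(G)<e(T_{n-1,3})+2$, contradicting \eqref{eq1}.

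An alternative I would try in parallel is a direct rewiring. Remove an extra edge $uy$ and rewire $y$, as in Lemma \ref{lem3.5}, to all of $V(G)\setminus(V_{c}\cup L)$ for a suitable $c$. Keep $uv$ so that the new graph stays non-$3$-partite and $W_{2k+2}$-free, and gain roughly $\sqrt{\varepsilon}n$ edges, contradicting the maximality of $e(G)$. The main obstacle is exactly this final sharpening: showing that $u$ has no second neighbour in $V_b\cap L$ or in $V_a\cap L$. It requires controlling the edges among the few low-degree vertices while ensuring the modified graph remains non-$3$-partite, for instance by retaining the edge $uv$ and the odd structure it creates. The bookkeeping in Lemma \ref{lem3.6}, namely the term $|S\setminus L||L\setminus S|$, is where I expect the delicate estimates.
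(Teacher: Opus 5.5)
Your reduction of the first assertion to Lemma \ref{lem3.5} is fine, and so are your preliminary observations. In fact $u\in L$ follows at once, since $u\in V'_b$, $b\neq a$ and $V_a\setminus L\subseteq V'_a$. Also $N_G(u,V_b\setminus L)=\emptyset$, because $w\in L$.

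\textbf{The gap is in the sharpening step.} It is not a matter of delicate bookkeeping: a deficit of order $n$ is missing. In Lemma \ref{lem3.6} the term $e(K_{|V_1|,|V_2|,|V_3|})$ may equal $e(T_{n,3})=e(T_{n-1,3})+n-\lceil n/3\rceil$. So to get below $e(T_{n-1,3})+2$, the vertices of $S$ must jointly miss about $2n/3$ edges. Your bounds on $u$ give $d_G(u)-(n-|V_a|)\le -|V_b|+O(|L|)$, which is only about $n/3$. An extra neighbour $y\in V_i$ of $u$ treated merely as an $L$-vertex gives only $d_G(y)-(n-|V_i|)\le -3\sqrt{\varepsilon}n$. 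Adding $v$ or $w$ likewise contributes $O(\sqrt{\varepsilon}n)$. You end up near $e(T_{n-1,3})+n/3$, which is no contradiction.

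The rewiring alternative fails for a structural reason. Rewiring an $L$-vertex $y$ to $V(G)\setminus(V_c\cup L)$ is exactly the operation of Lemma \ref{lem3.5}, and part (i) of that lemma shows the result is always $3$-partite. Keeping $uv$ therefore does not keep the graph non-$3$-partite, and maximality of $e(G)$ is never contradicted.

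\textbf{The missing idea is to apply the $G'$-partition a second time, to the extra neighbour.} Let $w'\in N_G(u,V_b)\setminus\{w\}$.
\begin{enumerate}[\rm (1)]
\item Since $u\in V'_b$, $uw'\in E(G')$ and $V'_b$ is independent in $G'$, we have $w'\in V'_c$ for some $c\neq b$, so in particular $w'\in L$.
\item As $w'\neq w$, every $G$-edge from $w'$ to $V_c\setminus L\subseteq V'_c$ survives in $G'$. Hence $d_G(w',V_c\setminus L)=0$.
\item Maximality of $\{V_1,V_2,V_3\}$ then gives $d_G(w',V_b)\le d_G(w',V_c)\le |V_c\cap L|$. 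Therefore $d_G(w')-(n-|V_b|)\le -|V_c|+2|V_c\cap L|$, so $w'$, like $u$, misses roughly a whole part.
\item Take $S=\{u,w,w'\}\subseteq L$ and use $d_G(w)-(n-|V_t|)\le -3\sqrt{\varepsilon}n$ for $w\in V_t$. Lemma \ref{lem3.6} yields $e(G)\le e(K_{|V_1|,|V_2|,|V_3|})-|V_b|-|V_c|+2|L|-3\sqrt{\varepsilon}n+3$.
\item Because $b\neq c$, the quantity $e(K_{|V_1|,|V_2|,|V_3|})-|V_b|-|V_c|$ is the size of a complete $3$-partite graph on $n-1$ vertices. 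Hence $e(G)\le e(T_{n-1,3})-\sqrt{\varepsilon}n+3$, contradicting \eqref{eq1}.
\end{enumerate}
So $N_G(u,V_b)\subseteq\{w\}$. Maximality then gives $1\le d_G(u,V_a)\le d_G(u,V_b)\le 1$, which yields $N_G(u,V_b)=\{w\}$ and $N_G(u,V_a)=\{v\}$ simultaneously. No separate counting for extra neighbours of $u$ in $V_a$ is needed.
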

\begin{proof}
Combining $V_i\setminus L\subseteq V'_i,$ $u\in V_a$ in $G$ and $u\in V'_b$ in $G',$ we have $u\in L.$
First we prove that $N_G(u,V_b\setminus\{w\})=\emptyset.$
Suppose to the contrary that there exists a vertex $w'\in V_b\setminus\{w\}$ such that $uw'\in E(G).$ As $u\in V'_b$ and $e(G'[V'_b])=0,$ we have $w'\notin V'_b.$ Then $w'\in L.$ Let $w'\in V'_c,$ where $c\neq b.$
Since $e(G'[V'_b])=0$ and $V_b\setminus L\subseteq V'_b,$ we have $d_G(u,V_b\setminus L)=0.$
By the maximality of $e_G(V_1,V_2,V_3),$ we have $d_G(u,V_a)\leq d_G(u,V_b)\leq |V_b\cap L|.$ Then
\begin{eqnarray}\label{eq16}
d_G(u)-(n-|V_a|)\leq -|V_b|+2|V_b\cap L|.
\end{eqnarray}
Using the technique leading to \eqref{eq16}, we deduce that
\begin{eqnarray}\label{eq17}
d_G(w')-(n-|V_b|)\leq -|V_c|+2|V_c\cap L|.
\end{eqnarray}
Moreover, if $w\in V_t,$ then by Lemmas \ref{lem3.1} and \ref{lem3.3}, we have
\begin{eqnarray}\label{eq18}
d_G(w)-(n-|V_t|)\leq \left(\frac23-5\sqrt{\varepsilon}\right)n-\left(\frac23-2\sqrt{\varepsilon}\right)n =-3\sqrt{\varepsilon}n.
\end{eqnarray}
Let $S=\{u,w,w'\}.$ Then $S\subseteq L.$ Combining Lemma \ref{lem3.6}, \eqref{eq16}, \eqref{eq17} and \eqref{eq18}, we have
\begin{eqnarray*}
e(G)&\leq&e(K_{|V_1|,|V_2|,|V_3|})-\sum^3_{i=1}\sum_{x\in V_i\cap S}(n-|V_i|)+e(K_{|V_1|,|V_2|,|V_3|}[S])
+\sum_{x\in S}d_G(x)-e(G[S])\\
&\leq& e(K_{|V_1|,|V_2|,|V_3|})-|V_b|-|V_c|+2|V_b\cap L|+2|V_c\cap L|-3\sqrt{\varepsilon}n+3\\
&\leq& e(T_{n-1,3})+2|L|-3\sqrt{\varepsilon}n+3\\
&\leq& e(T_{n-1,3})-\sqrt{\varepsilon}n+3<e(T_{n-1,3})+2
\end{eqnarray*}
for sufficiently large $n,$ contradicting \eqref{eq1}. Hence $N_G(u,V_b\setminus\{w\})=\emptyset.$ Recall that $uv\in E(G[V_a]).$ If $N_G(u,V_b)=\emptyset,$ then $d_G(u,V_a)>d_G(u,V_b),$ which contradicts the maximality of $e_G(V_1,V_2,V_3).$
Hence $N_G(u,V_b)\neq\emptyset,$ which implies that $w\in V_b\cap L$ and $N_G(u,V_b)=\{w\}.$ Thus $d_G(u,V_b)=1.$ By the maximality of $e_G(V_1,V_2,V_3),$ $d_G(u,V_a)\leq d_G(u,V_b)=1.$ Since $uv\in E(G[V_a]),$ we have $N_G(u,V_a)=\{v\}.$
\end{proof}

\begin{lemma}\label{lem3.8}
For any $i,j\in \{1,2,3\}$ with $i\neq j,$ if there exists an edge $uv\in E(G[V_i]),$ then $(V_i\cap L)\setminus \{u,v\}=\emptyset.$ In particular, if $|\{u,v\}\cap L|=1,$ then $|V_j\cap L|\leq 1.$
\end{lemma}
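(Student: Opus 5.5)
We argue by contradiction, combining Lemma \ref{lem3.7} with the counting bound of Lemma \ref{lem3.6}. Let $uv\in E(G[V_i])$. By Lemma \ref{lem3.4}, at least one of $u,v$ lies in $L$, say $u\in L$. Suppose some $w\in (V_i\cap L)\setminus\{u,v\}$ exists.

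The first step is to apply Lemma \ref{lem3.7} to $w$ with $j=i$. This gives the graph $G'$ together with a partition $\{V'_1,V'_2,V'_3\}$ in which $V_s\setminus L\subseteq V'_s$ for every $s$. Since $uv\in E(G')$, the vertices $u$ and $v$ lie in different parts. Hence at least one of them, say $u$, lies in some $V'_b$ with $b\neq i$. Lemma \ref{lem3.7} then gives $N_G(u,V_b)=\{w\}$. But $w\in V_i$ and $b\neq i$, so $w\notin V_b$, which is a contradiction.

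There is a subtlety here. It may be $v$ rather than $u$ that is displaced, possibly with $v\notin L$. Lemma \ref{lem3.7} is stated for an edge whose endpoint, labelled $u$, changes part. So we apply it with the roles of $u$ and $v$ swapped, i.e.\ to the edge $vu$. The lemma's proof shows that the displaced endpoint must lie in $L$, and its conclusion $N_G(\cdot,V_b)=\{w\}$ forces $w\in V_b$. This is incompatible with $w\in V_i$ whenever $b\neq i$. Since $u$ and $v$ cannot both stay in $V'_i$, some endpoint is displaced to a part $V'_b$ with $b\neq i$, and the contradiction follows in every case. This proves $(V_i\cap L)\setminus\{u,v\}=\emptyset$.

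For the ``in particular'' part, assume $|\{u,v\}\cap L|=1$ with $u\in L$ and $v\notin L$. Suppose $|V_j\cap L|\ge 2$ for some $j\neq i$, and pick $w\in V_j\cap L$. Apply Lemma \ref{lem3.7} to $w$ with the partition index chosen so that $w$ is moved away from $V'_i$, that is, with $j$ in that lemma taken different from $i$. Then $v\notin L$ gives $v\in V'_i$, so $u\in V'_b$ for some $b\neq i$. Lemma \ref{lem3.7} now yields $N_G(u,V_b)=\{w\}$ and $N_G(u,V_i)=\{v\}$, so in particular $w\in V_b$, i.e.\ $b=j$. Now choose a second vertex $w''\in (V_j\cap L)\setminus\{w\}$ and repeat the argument with $w''$ in place of $w$. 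This gives $N_G(u,V_{b''})=\{w''\}$ with $b''=j$, so $N_G(u,V_j)=\{w\}=\{w''\}$, a contradiction.

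If that route fails, a fallback is to bound $e(G)$ via Lemma \ref{lem3.6} with $S=\{u\}\cup(V_j\cap L)$. Here $d_G(u)\le 3$, and each vertex of $L$ has deficit at least $3\sqrt{\varepsilon}n$ as in \eqref{eq18}. This yields $e(G)<e(T_{n-1,3})+2$, contradicting \eqref{eq1}.

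The main obstacle is the bookkeeping in the repeated application of Lemma \ref{lem3.7}. We must make sure that moving $w$ cannot itself affect which part $u$ lands in. We must also make sure that $w\notin\{u,v\}$ is maintained throughout, which holds since $w\in V_j$ with $j\neq i$.
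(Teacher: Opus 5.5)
Your argument is correct and is essentially the paper's proof. You modify $w$ via Lemmas \ref{lem3.5} and \ref{lem3.7}, observe that the preserved edge $uv$ forces an endpoint into some $V'_b$ with $b\neq i$, and read off $N_G(\cdot,V_b)=\{w\}$; this contradicts $w\in V_i$ in the first part and forces $N_G(u,V_j)=\{w\}=\{w''\}$ in the second. The Lemma \ref{lem3.6} ``fallback'' is unnecessary and should be dropped, since its claim $d_G(u)\le 3$ is unjustified: you only know $N_G(u,V_i)=\{v\}$ and $N_G(u,V_j)=\{w\}$, with no upper bound on $d_G(u)$ in the third part.
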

\begin{proof}
We first prove that $(V_i\cap L)\setminus \{u,v\}=\emptyset.$ Without loss of generality, suppose to the contrary that there exists a vertex $w\in (V_1\cap L)\setminus\{u,v\}.$
Let $$G'=G-\{wx:x\in N_G(w)\}+\{wx:x\in V(G)\setminus(V_1\cup L)\}.$$ By Lemma \ref{lem3.5}, $G'$ is a $3$-partite graph and there exists a partition $\{V'_1, V'_2, V'_3\}$ of $V(G')$ such that $\sum^3_{i=1}e(G'[V'_i])=0$ and $V_i\setminus L\subseteq V'_i$ for each $i\in \{1,2,3\}.$ Then we have $u\notin V'_1$ or $v\notin V'_1.$ By symmetry, we suppose that $u\in V'_2.$ It follows from Lemma \ref{lem3.7} that $N_G(u,V_2)=\{w\},$ which contradicts $w\in V_1.$
Next we show that if $|\{u,v\}\cap L|=1,$ then $|V_j\cap L|\leq 1.$ Without loss of generality, we suppose that $u\in L,$ $v\notin L$ and there exist two vertices $w_1,w_2\in V_j\cap L.$
By the technique above, we have $N_G(u,V_j)=\{w_1\}$ and $N_G(u,V_j)=\{w_2\},$ a contradiction.
\end{proof}

\begin{lemma}\label{lem3.9}
$1\leq |L|\leq 3.$
\end{lemma}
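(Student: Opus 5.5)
We prove the two inequalities separately.

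\emph{Lower bound $|L|\geq 1$.} Suppose $L=\emptyset$. Then Lemma \ref{lem3.4} gives $e(G[V_i])=e(G[V_i\setminus L])=0$ for each $i\in\{1,2,3\}$. Hence $\{V_1,V_2,V_3\}$ is a proper $3$-colouring of $G$. By Lemma \ref{lem3.1} every $V_i$ is nonempty, so $G$ is $3$-partite. This contradicts $G\in \mathrm{Ex}_4(n,W_{2k+2})$.

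\emph{Upper bound $|L|\leq 3$.} Since $G$ is non-$3$-partite, the partition $\{V_1,V_2,V_3\}$ is not proper, so some part contains an edge. By Lemma \ref{lem3.4} every such edge $uv$ has an endpoint in $L$. We now apply Lemma \ref{lem3.6} with $S=L$. The error term $|S\setminus L||L\setminus S|$ then vanishes, and we get
\begin{eqnarray*}
e(G)\leq e(K_{|V_1|,|V_2|,|V_3|})-\sum_{i=1}^3\sum_{x\in V_i\cap L}(n-|V_i|)+e(K_{|V_1|,|V_2|,|V_3|}[L])+\sum_{x\in L}d_G(x)-e(G[L]).
\end{eqnarray*}
For $x\in V_i\cap L$, the bound \eqref{eq18} gives $d_G(x)-(n-|V_i|)\leq -3\sqrt{\varepsilon}n$. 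Moreover, $e(K_{|V_1|,|V_2|,|V_3|}[L])\leq \binom{|L|}{2}$, and by Lemma \ref{lem3.3} this is at most $\varepsilon n^2/2$. That bound is too crude, so instead we use $|L|\leq\sqrt{\varepsilon}n$ to bound it by $|L|\sqrt{\varepsilon}n/2$. This yields
\[
e(G)\leq e(K_{|V_1|,|V_2|,|V_3|})-|L|\Big(3\sqrt{\varepsilon}n-\tfrac{1}{2}\sqrt{\varepsilon}n\Big)\leq e(T_{n,3})-\tfrac52|L|\sqrt{\varepsilon}n .
\]

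This bound is not directly comparable with \eqref{eq1}, which sits about $\frac{2}{3}n$ below $e(T_{n,3})$. A bound of order $\sqrt{\varepsilon}n$ per vertex of $L$ does not reach $\frac{2}{3}n$, so the crude estimate fails. We must sharpen the degree bound for vertices of $L$ using the structure of $G$. \textbf{This is the main obstacle.}

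We use the maximality of $e_G(V_1,V_2,V_3)$, which gives $d_G(x,V_i)\leq \min_{j\neq i}d_G(x,V_j)$ for $x\in V_i$. Hence $d_G(x)-(n-|V_i|)\leq 3\min_{j\ne i} d_G(x,V_j)-(n-|V_i|)$. This is small unless $x$ has many neighbours in both other parts.

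The plan is to split $L$ into two groups.

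The first group is the vertices of $L$ that are endpoints of edges inside parts. By Lemma \ref{lem3.8}, the $L$-vertices in a part $V_i$ containing an edge are among that edge's endpoints. By Lemma \ref{lem3.7}, such a vertex $u$ has $d_G(u,V_b)=1$ and $d_G(u,V_a)=1$. So it contributes roughly $-(n-|V_a|)+|V_c|+2\approx -\frac{n}{3}$.

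The second group is all other vertices of $L$. Each of these lies in some $V_j$ with $|V_j\cap L|\leq 1$ (second part of Lemma \ref{lem3.8}), or is handled by the $w$-replacement of Lemma \ref{lem3.5}. For these we use $d_G(x)\leq (\frac23-5\sqrt{\varepsilon})n$ and the parts-sizes argument.

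Summing the contributions, $|L|\geq 4$ would force either two $L$-vertices of the first type, each costing about $\frac{n}{3}$, plus further loss, or several $L$-vertices in one part, contradicting Lemma \ref{lem3.8}. We then compare with $e(T_{n-1,3})+2=e(T_{n,3})-(n-\lceil n/3\rceil)+2$. The counting to check is that four vertices of $L$ cost strictly more than $n-\lceil n/3\rceil-2$ edges relative to $K_{|V_1|,|V_2|,|V_3|}$. Already two first-type vertices in different parts, together with the remaining $L$-vertices each costing at least $3\sqrt{\varepsilon}n$, should exceed this, which gives $|L|\leq 3$.
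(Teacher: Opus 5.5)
Your lower bound is the paper's argument and is fine. The upper bound, however, is not proved. The decisive step is only asserted, and the lemma it rests on is misapplied.

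Lemma \ref{lem3.7} does not say that every $L$-vertex on an edge $uv\in E(G[V_a])$ satisfies $N_G(u,V_b)=\{w\}$ and $N_G(u,V_a)=\{v\}$. It needs a vertex $w\in L\setminus\{u,v\}$ to be rerouted as in Lemma \ref{lem3.5}. Its conclusion then applies only to whichever endpoint of $uv$ is displaced out of $V'_a$ in the resulting $3$-partition of $G'$; it says nothing about the other endpoint. Likewise, the second part of Lemma \ref{lem3.8} requires $|\{u,v\}\cap L|=1$.

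Your ingredients do settle that case: if some edge inside a part has exactly one end in $L$, Lemma \ref{lem3.8} alone gives $|L|\le 3$. The whole difficulty is the case $u,v\in L$. An example is $L=\{u,v,w_1,w_2\}$ with $uv\in E(G[V_1])$ and $w_1,w_2\in V_2$, which Lemma \ref{lem3.8} does not exclude. There your dichotomy ``two first-type vertices or several $L$-vertices in one part'' is never justified, and the closing count is only stated with ``should''.

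The missing idea is a double rerouting, which is where $|L|\ge 4$ is really used. Pick two distinct $w_1,w_2\in L\setminus\{u,v\}$.
\begin{itemize}
\item Rerouting $w_1$ displaces some endpoint, say $u$, into $V'_2$. Then Lemma \ref{lem3.7} gives $N_G(u,V_2)=\{w_1\}$ and $N_G(u,V_1)=\{v\}$.
\item Rerouting $w_2$ again displaces some endpoint, and there are three possibilities.
\begin{itemize}
\item $u\in V''_2$ would give $N_G(u,V_2)=\{w_2\}$, which is impossible.
\item $u\in V''_3$ gives $N_G(u,V_3)=\{w_2\}$, so $d_G(u)=3$. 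Lemma \ref{lem3.6} with $S=\{u,w_1\}$ then yields $e(G)\le e(T_{n-1,3})-3\sqrt{\varepsilon}n+3$.
\item If $v$ is displaced, then $v\in L$, $N_G(v,V_1)=\{u\}$ and $N_G(v,V_t)=\{w_2\}$. Lemma \ref{lem3.6} with $S=\{u,v,w_1\}$ gives $e(G)\le e(K_{|V_1|-1,|V_2|,|V_3|})-(|V_1|+|V_t|)+d_G(w_1)+7<e(T_{n-1,3})+2$.
\end{itemize}
\end{itemize}
Only this case analysis produces the deficits your final count presupposes. Note that the two deficient vertices are $u,v$, both in $V_1$, not in different parts. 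In one branch a single vertex carries a deficit of about $\frac23 n$.

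Separately, the inequality $d_G(x)\le 3\min_{j\ne i}d_G(x,V_j)$ is false. Maximality only gives $d_G(x,V_i)\le\min_{j\ne i}d_G(x,V_j)$, so any vertex whose neighbours all lie in a single other part violates it.
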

\begin{proof}
If $|L|=0,$ then by Lemma \ref{lem3.4}, we have $\sum^3_{i=1}e(G[V_i])=0.$ Hence $G$ is $3$-partite, a contradiction. Therefore, $|L|\geq 1.$ Suppose to the contrary that $|L|\geq 4.$ Since $G$ is a non-$3$-partite graph, $\sum^3_{i=1}e(G[V_i])\geq 1.$ Without loss of generality, we may assume that $uv\in E(G[V_1]).$ Since $|L|\geq 4,$ we can select a vertex $w_1\in L\setminus\{u,v\}$ and construct a new graph $$G'=G-\{w_1x:x\in N_G(w_1)\}+\{w_1x:x\in V(G)\setminus(V_1\cup L)\}.$$
By Lemma \ref{lem3.5}, $G'$ is a $3$-partite $W_{2k+2}$-free graph and there exists a partition $\{V'_1, V'_2, V'_3\}$ of $V(G')$ such that $\sum^3_{i=1}e(G'[V'_i])=0$ and $V_i\setminus L\subseteq V'_i$ for each $i\in \{1,2,3\}.$ Note that $uv\in E(G').$ Then $u\notin V'_1$ or $v\notin V'_1.$ By symmetry, we may assume that $u\in V'_2$ in $G'.$ Since $V_i\setminus L\subseteq V'_i,$ we have $u\in L.$ It follows from Lemma \ref{lem3.7} that
\begin{eqnarray}\label{eq19}
N_G(u,V_2)=\{w_1\}
\end{eqnarray}
and $N_G(u,V_1)=\{v\}.$
Since $|L|\geq 4,$ we can take a vertex $w_2\in L\setminus \{u,v,w_1\}.$ Let $$G''=G-\{w_2x:x\in N_G(w_2)\}+\{w_2x:x\in V(G)\setminus(V_1\cup L)\}.$$
By Lemma \ref{lem3.5}, $G''$ is a 3-partite $W_{2k+2}$-free graph and there exists a partition $\{V''_1, V''_2, V''_3\}$ of $V(G'')$ such that $\sum^3_{i=1}e(G''[V''_i])=0$ and $V_i\setminus L\subseteq V''_i.$
Hence $u\notin V''_1$ or $v\notin V''_1$ in $G''.$
Recall that $N_G(u,V_2)=\{w_1\}$ and $N_G(u,V_1)=\{v\}.$
If $u\in V''_2,$ then by Lemma \ref{lem3.7}, we have $N_G(u,V_2)=\{w_2\},$ contradicting \eqref{eq19}. If $u\in V''_3,$ then $N_G(u,V_3)=\{w_2\}.$ Hence $d_G(u)=3.$ Combining $\{u,w_1\}\subseteq L,$ Lemmas \ref{lem3.1}, \ref{lem3.6} and $uw_1\in E(G),$ we have
\begin{eqnarray*}
e(G)&\leq& e(K_{|V_1|,|V_2|,|V_3|})-\sum^3_{i=1}\sum_{x\in V_i\cap \{u,w_1\}}(n-|V_i|)+1+\sum_{x\in \{u,w_1\}}d_G(x)-1\\
&\leq& e(K_{|V_1|,|V_2|,|V_3|})-(|V_2|+|V_3|)-(|V_1|+|V_3|)+d_G(u)+d_G(w_1)\\
&\leq& e(T_{n-1,3})-3\sqrt{\varepsilon}n+3<e(T_{n-1,3})+2
\end{eqnarray*}
for large enough $n,$ contradicting \eqref{eq1}.
If $v\notin V''_1$ in $G'',$ then by Lemma \ref{lem3.7}, there exists $t\in \{2,3\}$ such that $N_G(v,V_t)=\{w_2\}$ and $N_G(v,V_1)=\{u\}.$ Hence $d_G(v)\leq (|V_2|+|V_3|)-|V_t|+2.$ Since $V_i\setminus L\subseteq V''_i,$ we have $v\in L.$
Note that $d_G(u)\leq |V_3|+2.$ Combining $\{u,v,w_1\}\subseteq L$ and Lemmas \ref{lem3.1} and \ref{lem3.6}, we have
\begin{eqnarray*}
e(G)&\leq& e(K_{|V_1|,|V_2|,|V_3|})-2(|V_2|+|V_3|)-(|V_1|+|V_3|)+d_G(u)+d_G(v)+d_G(w_1)+3\\
&\leq& e(K_{|V_1|-1,|V_2|,|V_3|})-(|V_1|+|V_t|)+d_G(w_1)+7\\
&\leq& e(T_{n-1,3})-3\sqrt{\varepsilon}n+7<e(T_{n-1,3})+2
\end{eqnarray*}
for large enough $n,$ a contradiction.
\end{proof}

Without loss of generality, we suppose that $|V_1\cap L|\geq |V_2\cap L|\geq |V_3\cap L|.$ By Lemma \ref{lem3.9}, $1\leq |L|\leq 3.$ Hence $|V_3\cap L|\leq|V_2\cap L|\leq 1.$

\begin{lemma}\label{lem3.10}
$|V_3\cap L|=0$ and $e(G[V_3])=0.$
\end{lemma}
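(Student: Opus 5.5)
The second assertion follows quickly from the first, so most of the work goes into showing $|V_3\cap L|=0$. Indeed, suppose $|V_3\cap L|=0$ and $uv\in E(G[V_3])$. By Lemma \ref{lem3.4}, $e(G[V_3\setminus L])=0$, so at least one of $u,v$ lies in $V_3\cap L=\emptyset$, which is impossible. Hence $e(G[V_3])=0$. It therefore remains to exclude $|V_3\cap L|=1$.

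Suppose $|V_3\cap L|=1$. The ordering $|V_1\cap L|\ge|V_2\cap L|\ge|V_3\cap L|$ and Lemma \ref{lem3.9} then force $|V_i\cap L|=1$ for every $i$, and $|L|=3$. Since $G$ is non-$3$-partite, there is an edge $uv\in E(G[V_a])$ for some $a$. By Lemma \ref{lem3.4}, at least one endpoint, say $u$, is in $L$. As $V_a\cap L$ has only one vertex, $v\notin L$.

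Let $w_b$ and $w_c$ be the $L$-vertices in the two other parts $V_b$ and $V_c$. First take $w=w_b$ and form $G'$ as in Lemma \ref{lem3.5} with $j=a$. Then $G'$ is $3$-partite with $V_i\setminus L\subseteq V_i'$. Because $v\in V_a\setminus L\subseteq V'_a$ and $uv\in E(G')$, the vertex $u$ lies in $V'_b$ or $V'_c$. By Lemma \ref{lem3.7}, $N_G(u,V_a)=\{v\}$ and $N_G(u,V_{t})=\{w_b\}$, where $V_t$ is the part containing $u$. Since $w_b\in V_b$, this forces $t=b$. Thus $N_G(u,V_b)=\{w_b\}$ and $N_G(u,V_a)=\{v\}$.

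Next repeat the construction with $w=w_c$. The same argument puts $u$ in some $V''_{t'}$ with $N_G(u,V_{t'})=\{w_c\}$, and hence $t'=c$. So $N_G(u,V_c)=\{w_c\}$, and altogether $d_G(u)=3$ and $uw_b\in E(G)$.

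Now apply Lemma \ref{lem3.6} with $S=\{u,w_b\}\subseteq L$, exactly as in the proof of Lemma \ref{lem3.9}. This gives
\[
e(G)\le e(K_{|V_1|,|V_2|,|V_3|})-(n-|V_a|)-(n-|V_b|)+d_G(u)+d_G(w_b).
\]
The terms $e(K[S])$ and $e(G[S])$ both equal $1$ and cancel. Using $e(K_{|V_1|,|V_2|,|V_3|})-(n-|V_a|)+d_G(u)\le e(T_{n-1,3})+3$ and $d_G(w_b)-(n-|V_b|)\le-3\sqrt{\varepsilon}n$ from \eqref{eq18}, we get $e(G)\le e(T_{n-1,3})-3\sqrt{\varepsilon}n+3<e(T_{n-1,3})+2$. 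This contradicts \eqref{eq1}.

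The main obstacle is this bookkeeping step. One has to check carefully that Lemma \ref{lem3.7} applies to each relocated vertex, which requires $w\in L\setminus\{u,v\}$. One also has to confirm that the part to which $u$ moves is pinned down by which $L$-vertex it sees, so that the two relocations together force $d_G(u)=3$.
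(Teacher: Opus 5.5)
Your proposal is correct and follows the paper's proof essentially step for step. Assuming $|V_3\cap L|=1$ forces exactly one $L$-vertex in each part, and the two relocations via Lemmas \ref{lem3.5} and \ref{lem3.7} (excluding the part that contains the edge $uv$) give $N_G(u,V_i)$ for all three parts, so $d_G(u)=3$; Lemma \ref{lem3.6} with $S=\{u,w_b\}$ then contradicts \eqref{eq1}, and $e(G[V_3])=0$ follows from Lemma \ref{lem3.4}. Your bookkeeping is slightly tighter than the paper's, since you cancel $e(K_{|V_1|,|V_2|,|V_3|}[S])$ against $e(G[S])$ and get $+3$ where the paper has $+4$, but this does not affect the conclusion.
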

\begin{proof}
Suppose to the contrary that $|V_3\cap L|=1.$ Then $|V_1\cap L|=|V_2\cap L|=|V_3\cap L|=1.$
Since $G$ is non-$3$-partite, we have $\sum^3_{i=1}e(G[V_i])\geq 1.$
By Lemma \ref{lem3.4} and $|V_1\cap L|=1,$ without loss of generality, suppose that $uv\in E(G[V_1])$ and $u\in L.$
Let $$G_j=G-\{w_jx:x\in N_G(w_j)\}+\{w_jx:x\in V(G)\setminus(V_1\cup L)\},$$ where $\{w_j\}=V_{j+1}\cap L$ with $j\in \{1,2\}.$
By Lemma \ref{lem3.5}, $G_1$ and $G_2$ are 3-partite $W_{2k+2}$-free graphs and there exists a partition $\{V^j_1, V^j_2, V^j_3\}$ of $V(G_j)$ such that $\sum^3_{i=1}e(G_j[V^j_i]) = 0$ and $V_i \setminus L\subseteq V^j_i,$ where $i\in \{1,2,3\}$ and $j\in \{1,2\}.$ Then $u\notin V^j_1.$ By Lemma \ref{lem3.7}, we have $N_G(u,V_1)=\{v\},$ $N_G(u,V_2)=\{w_1\}$ and $N_G(u,V_3)=\{w_2\},$ which implies that $d_G(u)=3.$
Combining Lemmas \ref{lem3.1} and \ref{lem3.6}, we have
\begin{eqnarray*}
e(G) &\leq& e(K_{|V_1|,|V_2|,|V_3|})-(n-|V_1|)-(n-|V_2|)+d_G(u)+d_G(w_1)+1\\
&\leq& e(K_{|V_1|-1,|V_2|,|V_3|})-3\sqrt{\varepsilon}n+4< e(T_{n-1,3})+2
\end{eqnarray*}
for sufficiently large $n,$ which contradicts \eqref{eq1}. Hence $|V_3\cap L|=0.$ It follows from Lemma \ref{lem3.4} that $e(G[V_3])=0.$
\end{proof}

\begin{lemma}\label{lem3.11}
Let $a,b,c\in \{1,2,3\}$ be distinct integers, $w\in V_a\setminus L$ and $u_1,v_1\in N_G(w)\setminus L,$ where $u_1\neq v_1.$ Let $S\subseteq V(G)$ with $|S|\leq 2k+2.$ Then the following hold:
\begin{enumerate}[\rm (i)]
	\item if $u_1\in (V_b\cap N_G(w))\setminus (L\cup S)$ and $v_1\in (V_c\cap N_G(w))\setminus (L\cup S),$ then for any $\ell \in \{1,2,\ldots,k\},$ there exists an odd path $P$ of length $2\ell+1$ between $u_1$ and $v_1$ such that $V(P)\subseteq N_G(w)\setminus S$;
\item if $u_1,v_1\in (V_b\cap N_G(w))\setminus (L\cup S),$ then for any $\ell\in \{1,2,\ldots,k\},$ there exists an even path $P$ of length $2\ell$ between $u_1$ and $v_1$ such that $V(P)\subseteq N_G(w)\setminus S.$
\end{enumerate}
\end{lemma}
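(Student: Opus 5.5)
The plan is to build the paths greedily inside $N_G(w)$, alternating between the two parts $V_b$ and $V_c$ other than $V_a$. The key input is that high-degree vertices have almost full neighborhoods in the other parts. Since $w\notin L$, we have $d_G(w)>(\frac23-5\sqrt{\varepsilon})n$. By Lemma \ref{lem3.2}, $d_G(w,V_a)<2\sqrt{\varepsilon}n$. Arguing as for \eqref{eq8}, this gives
$$d_G(w,V_j)>\left(\tfrac13-9\sqrt{\varepsilon}\right)n \quad\text{for } j\in\{b,c\}.$$
Likewise, every $x\in V_j\setminus L$ satisfies $d_G(x,V_{j'})>(\frac13-9\sqrt{\varepsilon})n$ for each $j'\neq j$. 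Set $A_j=(N_G(w,V_j)\setminus (L\cup S))$ for $j\in\{b,c\}$. By Lemma \ref{lem3.3} and $|S|\le 2k+2$, we get $|A_j|\ge(\frac13-10\sqrt{\varepsilon})n-2k-2$. Any $x\in V_b\setminus L$ is then adjacent to all but at most $|V_c|-d_G(x,V_c)\le 11\sqrt{\varepsilon}n$ vertices of $V_c$. So, by Lemma \ref{lem2.2},
$$|N_G(x)\cap N_G(y)\cap A_c|\ge |A_c|-22\sqrt{\varepsilon}n\ge\left(\tfrac13-32\sqrt{\varepsilon}\right)n-2k-2>2k+2$$
for any $x,y\in V_b\setminus L$, using \eqref{eq2}. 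The symmetric bound holds with $b$ and $c$ swapped.

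For (i), with $u_1\in A_b$ and $v_1\in A_c$, I build $P=u_1x_1y_1x_2y_2\cdots x_\ell y_\ell$, where the $x_i\in A_c$ and the $y_i\in A_b$ are distinct. For $\ell=1$, I choose any $x_1\in A_c\cap N_G(u_1)$ and $y_1\in A_b\cap N_G(x_1)\cap N_G(v_1)$, which gives the path $u_1x_1y_1v_1$ of length $3$. In general, I pick $x_1,y_1,\dots,x_{\ell-1},y_{\ell-1}$ one at a time. Each new vertex is a neighbor of the previous one in the appropriate set $A_j$, avoiding the at most $2k+2$ vertices already used and avoiding $u_1,v_1$. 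This is possible because each such neighborhood has size linear in $n$. For the last two steps, I choose $x_\ell\in A_c\cap N_G(y_{\ell-1})$ unused, and then $y_\ell\in A_b\cap N_G(x_\ell)\cap N_G(v_1)$ unused. The common-neighborhood bound above guarantees $y_\ell$ exists. The result is a path $u_1\cdots y_\ell v_1$ of length $2\ell+1$. All vertices lie in $A_b\cup A_c\subseteq N_G(w)\setminus S$, and the edges used are cross edges between $V_b$ and $V_c$, which exist by construction.

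For (ii), with $u_1,v_1\in A_b$, I argue in the same way with a path $u_1x_1y_1\cdots y_{\ell-1}x_\ell v_1$, where $x_i\in A_c$ and $y_i\in A_b$. For $\ell=1$, I only need a common neighbor $x_1\in A_c$ of $u_1$ and $v_1$, which exists by the bound above. For larger $\ell$, I extend greedily and close up with a common neighbor of $y_{\ell-1}$ and $v_1$ in $A_c$. This gives length $2\ell$.

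The only delicate point is the counting: in the worst case, each closing step must still leave room after removing $L$, $S$, the at most $2k+1$ used vertices, and the non-neighbors. This is exactly where $|S|\le 2k+2$, Lemma \ref{lem3.3}, and $96k\sqrt{\varepsilon}<1$ from \eqref{eq2} enter, and since the bound is linear in $n$ against constants, I expect no real obstacle.
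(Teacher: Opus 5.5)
Your proof is correct and follows essentially the paper's approach. You bound the cross-degrees of $w$ and of every vertex outside $L$, then use Lemma~\ref{lem2.2} to find common neighbours inside $N_G(w)\setminus S$, alternating between $V_b$ and $V_c$. The only difference is organizational: you grow the path greedily, need only two-fold common neighbourhoods at the closing step, and bound $d_G(w,V_a)$ via Lemma~\ref{lem3.2}. The paper instead picks all $\ell$ interior vertices on each side at once, as common neighbours of up to $\ell+2$ vertices, and uses $d_G(w,V_a)\le 3$ from Lemmas~\ref{lem3.4} and~\ref{lem3.9}.
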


\begin{proof}
By Lemmas \ref{lem3.4} and \ref{lem3.9}, $d_G(w,V_a)\leq 3.$ Combining $w\notin L$ and Lemmas \ref{lem3.1}, we have
\begin{eqnarray}\label{eq20}
d_G(w,V_j)\geq d_G(w)-d_G(w,V_a)-(n-|V_a|-|V_j|)\geq \frac{n}{3}-7\sqrt{\varepsilon}n-3
\end{eqnarray}
for each $j\in \{b,c\}.$

(i) We consider the common neighbors of $w$ and $u_1$ in $V_c\setminus (S\cup L\cup\{v_1\}).$
Since $u_1\in (V_b\cap N_G(w))\setminus (L\cup S),$ by the technique leading to \eqref{eq20}, we have $d_G(u_1,V_c)\geq \frac{n}{3}-7\sqrt{\varepsilon}n-3.$ By Lemmas \ref{lem2.2} and \ref{lem3.1}, \eqref{eq20} and \eqref{eq2}, we have
\begin{eqnarray*}
& &\Big|\big(N_G(w,V_c)\cap N_G(u_1,V_c)\big)\setminus (S\cup L\cup\{v_1\})\Big|\\
&\geq& d_G(w,V_c)+d_G(u_1,V_c)-|V_c|-(2k+6)\\
&>& 2\left(\frac{n}{3}-7\sqrt{\varepsilon}n-3\right)-\left(\frac{1}{3}+2\sqrt{\varepsilon}\right)n-(2k+6)\\
&=& \left(\frac{1}{3}-16\sqrt{\varepsilon}\right)n-(2k+12)\\
&\geq& \ell
\end{eqnarray*}
for sufficiently large $n.$ Then we choose distinct vertices $v_2,v_3,\ldots, v_{\ell+1}\in (N_G(w,V_c)\cap N_G(u_1,V_c))\setminus (S\cup L\cup\{v_1\}).$ Next we consider the common neighbors of $w,v_1,v_2,\ldots, v_{\ell+1}$ in $V_b\setminus (S\cup\{u_1\}).$
Similarly to the proof of \eqref{eq20}, one can obtain that $d_G(v_i,V_b)\geq \frac{n}{3}-7\sqrt{\varepsilon}n-3$ for $i\in \{1,2,\ldots,\ell+1\}.$
Combining Lemmas \ref{lem2.2} and \ref{lem3.1}, \eqref{eq20} and \eqref{eq2}, we deduce that
\begin{eqnarray*}
& &\Big|\big(N_G(w,V_b)\cap N_G(v_1,V_b)\cap \cdots \cap N_G(v_{\ell+1},V_b)\big)\setminus (S\cup\{u_1\})\Big|\\
&>& (\ell+2)\left(\frac{n}{3}-7\sqrt{\varepsilon}n-3\right)-(\ell+1)\left(\frac{1}{3}+2\sqrt{\varepsilon}\right)n-(2k+3)\\
&=& \left(\frac{1}{3}-(9\ell+16)\sqrt{\varepsilon}\right)n-(2k+3)-3(\ell+2) \\
&\geq& \ell
\end{eqnarray*}
for large enough $n.$ Let $u_2,u_3,\ldots, u_{\ell+1}\in \big(N_G(w,V_b)\cap N_G(v_1,V_b)\cap \cdots \cap N_G(v_{\ell+1},V_b)\big)\setminus (S\cup\{u_1\}).$ Then we can find an odd path $u_1v_2u_2\cdots v_{\ell+1}u_{\ell+1}v_1$ in $N_G(w)\setminus S,$ as desired.

(ii) We consider the common neighbors of $w,u_1,v_1$ in $V_c\setminus (S\cup L).$ Similarly to \eqref{eq20}, $d_G(x,V_c)\geq \frac{n}{3}-7\sqrt{\varepsilon}n-3$ for any $x\in \{u_1,v_1\}.$ Combining Lemmas \ref{lem2.2} and \ref{lem3.1}, \eqref{eq20} and \eqref{eq2}, we deduce that
\begin{eqnarray*}
& &\Big|\big(N_G(w,V_c)\cap N_G(u_1,V_c)\cap N_G(v_1,V_c)\big)\setminus (S\cup L)\Big|\\
&\geq& \sum_{x\in\{w,u_1,v_1\}} d_G(x,V_c)-2|V_c|-(2k+5)\\
&>& 3\left(\frac{n}{3}-7\sqrt{\varepsilon}n-3\right)-2\left(\frac{1}{3}+2\sqrt{\varepsilon}\right)n-(2k+5)\\
&=& \left(\frac{1}{3}-25\sqrt{\varepsilon}\right)n-(2k+14)\geq \ell
\end{eqnarray*}
for sufficiently large $n.$ Let $v_2,v_3,\ldots, v_{\ell+1}\in \big(N_G(w,V_c)\cap N_G(u_1,V_c)\cap N_G(v_1,V_c)\big)\setminus (S\cup L).$ Next we consider the common neighbors of $w,v_2,\ldots, v_{\ell+1}$ in $V_b\setminus (S\cup\{u_1,v_1\}).$
Similarly to \eqref{eq20}, we have $d_G(v_i,V_b)\geq \frac{n}{3}-7\sqrt{\varepsilon}n-3$ for $i\in \{2,\ldots, \ell+1\}.$
By Lemmas \ref{lem2.2} and \ref{lem3.1}, \eqref{eq20} and \eqref{eq2}, we have
\begin{eqnarray*}
& &\Big|\big(N_G(w,V_b)\cap N_G(v_2,V_b)\cap \cdots \cap N_G(v_{\ell+1},V_b)\big)\setminus (S\cup\{u_1,v_1\})\Big| \\
&>& (\ell+1)\left(\frac{n}{3}-7\sqrt{\varepsilon}n-3\right)-\ell\left(\frac{1}{3}+2\sqrt{\varepsilon}\right)n-(2k+4)\geq \ell-1
\end{eqnarray*}
for large enough $n.$ Let $u_2,u_3,\ldots, u_{\ell}\in \big(N_G(w,V_b)\cap N_G(v_2,V_b)\cap \cdots \cap N_G(v_{\ell+1},V_b)\big)\setminus (S\cup\{u_1,v_1\}).$ Then we can find an even path $u_1v_2u_2\cdots u_{\ell}v_{\ell+1}v_1$ in $N_G(w)\setminus S,$ as desired.
\end{proof}

\begin{lemma}\label{lem3.12}
$\sum^3_{i=1}e(G[V_i])=1.$
\end{lemma}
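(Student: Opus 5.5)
Since $G$ is non-$3$-partite, we have $\sum_{i=1}^3 e(G[V_i])\geq 1$. The argument therefore concerns the upper bound: we rule out two or more edges inside the parts. By Lemma \ref{lem3.4}, every edge inside a part $V_i$ meets $L$. By Lemma \ref{lem3.10}, $e(G[V_3])=0$ and $V_3\cap L=\emptyset$. With the normalization $|V_1\cap L|\geq|V_2\cap L|\geq|V_3\cap L|$ and Lemma \ref{lem3.9}, we have $|V_2\cap L|\le 1$ and $|V_1\cap L|\le 2$. Moreover, Lemma \ref{lem3.8} says that if $uv\in E(G[V_i])$ then $V_i\cap L\subseteq\{u,v\}$. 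Hence all edges in $G[V_1]$ lie among the at most two vertices of $V_1\cap L$ together with their neighbours, and the same holds for $V_2$ with its single possible vertex of $L$.

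Suppose to the contrary that there are at least two internal edges. I would split into cases according to where they lie.

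\textbf{Case A: two edges in the same part.} Say $uv, u'v'\in E(G[V_1])$. Lemma \ref{lem3.8} applied to both edges forces $V_1\cap L\subseteq\{u,v\}\cap\{u',v'\}$. Since each edge meets $L$, the two edges share a vertex $u\in L$, i.e.\ they are $uv$ and $uv'$ with $v,v'\notin L$ (or the two $L$-vertices are joined). Then $d_G(u,V_1)\ge 2$. By maximality of $e_G(V_1,V_2,V_3)$ we get $d_G(u,V_j)\ge 2$ for $j=2,3$. I would then try a Lemma \ref{lem3.5}/\ref{lem3.7} switch at another $L$-vertex if one exists. Otherwise $u$ itself has large common neighbourhoods with $v,v'$, since these are not in $L$. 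Using Lemma \ref{lem3.11}, I would build an odd cycle of length $2k+1$ inside $N_G(x)$ for a suitable high-degree vertex $x$: for example, $x=v$ or a common neighbour in $V_2$ or $V_3$, with the rim using the edge $uv'$ together with a path from Lemma \ref{lem3.11}(ii) of even length $2k-2$ or odd length inside $N_G(x)\setminus L$. That gives a $W_{2k+2}$, a contradiction. If the configuration is too sparse to build the wheel, then $u$ has small degree, and Lemma \ref{lem3.6} with $S=V_1\cap L\cup V_2\cap L$ gives $e(G)<e(T_{n-1,3})+2$, contradicting \eqref{eq1}.

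\textbf{Case B: one edge in $V_1$ and one in $V_2$.} Say $uv\in E(G[V_1])$ and $u'v'\in E(G[V_2])$, with $u\in L$ and $u'\in L$; then $V_2\cap L=\{u'\}$. Lemma \ref{lem3.8} (the ``in particular'' part), applied to $uv$ with $v\notin L$, gives $|V_2\cap L|\le1$, which is consistent. Next I apply Lemma \ref{lem3.5} with $w=u'$ and $a=2$. Then $u$ must move out of $V_1'$, because $uv$ survives in $G'$. Lemma \ref{lem3.7} then gives $N_G(u,V_b)=\{u'\}$ and $N_G(u,V_1)=\{v\}$ for some $b$, so $d_G(u)$ is about $|V_{c}|+2$. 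Symmetrically, a switch at $u$ controls $d_G(u')$. Plugging $S=\{u,u'\}$ into Lemma \ref{lem3.6} yields $e(G)\le e(T_{n-1,3})-\Omega(\sqrt{\varepsilon}n)$. The term $-\Omega(\sqrt{\varepsilon}n)$ comes from the deficiency $-3\sqrt\varepsilon n$ of a second $L$-vertex, as in \eqref{eq18}. This contradicts \eqref{eq1}.

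I expect Case A to be the main obstacle, where both edges hang on a single low-degree vertex $u$ and no second $L$-vertex is available for the switching lemmas. The counting of Lemma \ref{lem3.6} then only gives roughly $e(T_{n-1,3})+d_G(u,V_1)-O(1)$, which is not immediately below the threshold. So the wheel must be found explicitly: the center is a vertex $x\in (V_2\cup V_3)\setminus L$ adjacent to $u,v,v'$, and the rim is $u\,v\,P\,v'\,u$, where $P$ is a path from $v$ to $v'$ in $N_G(x)\setminus\{u\}$ of length $2k-2$ given by Lemma \ref{lem3.11}(ii). This works when $k\ge2$, giving a rim of length $2k+1$. I would first check that such an $x$ exists via Lemma \ref{lem2.2}, using $d_G(u,V_j)\ge2$ and high degrees of $v,v'$. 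If $u$'s neighbours in $V_2\cup V_3$ all lie in $L$, then the degree bound from Lemma \ref{lem3.6} takes over.
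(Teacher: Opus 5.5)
Your case split and the easy subcases are fine. In Case A, a second $L$-vertex exists whenever $|L|\ge 2$, and then Lemma \ref{lem3.7} gives $N_G(u,V_1)=\{v\}$, contradicting $uv'\in E(G)$. However, the two remaining steps both fail, and they are exactly where the paper needs new ideas.

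\textbf{Case A with $L=\{u\}$.} Your wheel has the wrong parity. The rim $u\,v\,P\,v'\,u$ with $|P|=2k-2$ has length $2k$, so it gives a $W_{2k+1}$, not a $W_{2k+2}$. You also cannot replace $P$ by an odd path. When $L=\{u\}$, Lemma \ref{lem3.4} makes $G-u$ $3$-partite with parts $V_1\setminus\{u\},V_2,V_3$. So for $x\in V_2$, the graph $G[N_G(x)\setminus\{u\}]$ is bipartite with $v,v'$ on the same side. Consequently, any odd rim through $u$ must use exactly one edge $uv_i$ and return to $u$ through $N_G(u,V_3)$.

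The paper exploits precisely this. It takes a wheel with centre $w\in N_G(u,V_2)\cap N_G(v_i,V_2)$ and rim $v_i\,u\,w_1\,P\,v_i$, where $w_1\in N_G(u,V_3)$ and $P$ is an odd path of length $2k-1$ from Lemma \ref{lem3.11}(i). This shows $E_G(N_G(u,V_2)\cap N_G(v_i,V_2),N_G(u,V_3))=\emptyset$. A count then produces some $w'\in N_G(u,V_2)\cap N_G(v_1,V_2)$. Finally, $G'=G-uw'+\{w'x:x\in N_G(u,V_3)\}$ has more edges, because $d_G(u,V_3)\ge 2$. It is still $W_{2k+2}$-free: $N_{G'}(w')$ is bipartite, and if $w'$ lies on a rim it can be replaced by a common neighbour in $V_2$. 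It is also still non-$3$-partite, contradicting the extremality of $G$. As you yourself observe, Lemma \ref{lem3.6} alone does not close this subcase, so as written it is unproved.

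\textbf{Case B.} The claimed bound $e(G)\le e(T_{n-1,3})-\Omega(\sqrt{\varepsilon}n)$ is false. Lemma \ref{lem3.7} gives only $d_G(u)\le |V_3|+2$, which is linear in $n$, so $u$ does not behave like a deleted vertex. Take $S=\{u,u'\}$ in Lemma \ref{lem3.6} together with the symmetric bound $d_G(u')\le |V_3|+2$. At best this yields $e(G)\le e(K_{|V_1|,|V_2|,|V_3|})-|V_1|-|V_2|+4=e(K_{|V_1|,|V_2|,|V_3|-1})+4\le e(T_{n-1,3})+4$, which does not contradict \eqref{eq1}. Using \eqref{eq18} for $u'$ is even weaker, giving $e(T_{n-1,3})+|V_3|+2-3\sqrt{\varepsilon}n$.

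The missing idea is to use the max-cut choice rather than counting. You have $N_G(u,V_1)=\{v\}$, $N_G(u,V_2)=\{u'\}$, $N_G(u',V_2)=\{v'\}$ and $N_G(u',V_1)=\{u\}$. Hence swapping $u$ and $u'$ between $V_1$ and $V_2$ turns $uv$ and $u'v'$ into crossing edges and creates no new internal edge. This raises $e_G(V_1,V_2,V_3)$ by two, a contradiction.

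A smaller point: you assume $v\notin L$. To justify it, apply the ``in particular'' part of Lemma \ref{lem3.8} to the edge $u'v'$, which meets $L$ in exactly one vertex; this gives $|V_1\cap L|\le 1$.
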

\begin{proof}
\setcounter{case}{0}
Since $G$ is non-$3$-partite, $\sum^3_{i=1}e(G[V_i])\geq 1.$ Suppose to the contrary that $\sum^3_{i=1}e(G[V_i])\geq 2.$ According to Lemma \ref{lem3.10}, we have $\sum^2_{i=1}e(G[V_i])\geq 2.$ By Lemma \ref{lem3.9}, $1\leq |L|\leq 3.$ We have the following claims.

\begin{claim}\label{cla1}
$2\leq |L|\leq 3.$
\end{claim}

\begin{proof}
Suppose to the contrary that $L=\{u\}.$ Then $u\in V_1.$ Let $\{uv_1,uv_2,\ldots,uv_s\}\subseteq E(G[V_1]),$ where $s=d_G(u,V_1).$ Combining $\sum^3_{i=1}e(G[V_i])\geq 2$ and Lemma \ref{lem3.4}, we have $s=e(G[V_1])=\sum^3_{i=1}e(G[V_i])\geq 2.$
By the maximality of $e_G(V_1,V_2,V_3),$ we have
\begin{eqnarray}\label{eq21}
d_G(u,V_i)\geq d_G(u,V_1)\geq 2
\end{eqnarray}
for each $i\in\{2,3\}.$ We first prove that $E(N_G(u,V_2)\cap N_G(v_i,V_2), N_G(u,V_3))=\emptyset$ for each $i\in \{1,2,\ldots,s\}.$ Otherwise, by symmetry, suppose that there exist two vertices $w\in N_G(u,V_2)\cap N_G(v_1,V_2)$ and $w_1\in N_G(u,V_3)$ such that $ww_1\in E(G).$ Note that $L=\{u\}.$ Take $S=\{u\}.$ Then $v_1\in N_G(w,V_1)\setminus (L\cup S)$ and $w_1\in N_G(w,V_3)\setminus (L\cup S).$ By Lemma \ref{lem3.11}, there exists an odd path $P$ of length $2k-1$ between $w_1$ and $v_1$ such that $V(P)\subseteq N_G(w)\setminus S.$ Then we can find a wheel $W_{2k+2}$ in $G$ with center vertex $w$ and rim $v_1uw_1\cdots v_1,$ a contradiction. Hence $e(N_G(u,V_2)\cap N_G(v_i,V_2), N_G(u,V_3))=0$ for each $i\in \{1,2,\ldots, s\}.$ By symmetry, $e(N_G(u,V_3)\cap N_G(v_i,V_3), N_G(u,V_2))=0.$

Next we show that there exists some $j\in \{1,2,\ldots, s\}$ such that $N_G(u,V_2\cup V_3)\cap N_G(v_{j},V_2\cup V_3)\neq \emptyset.$ Suppose to the contrary that $N_G(u,V_2\cup V_3)\cap N_G(v_i,V_2\cup V_3) = \emptyset$ for each $i \in \{1,2,\ldots, s\}.$ Then $d_G(u,V_2\cup V_3)+d_G(v_i,V_2\cup V_3)\leq |V_2|+|V_3|.$ By \eqref{eq21}, $d_G(u,V_2\cup V_3) = d_G(u,V_2) + d_G(u,V_3) \geq 4.$ This implies $d_G(v_i,V_2\cup V_3)\leq |V_2|+|V_3|-4.$ It follows from $s\geq 2$ that
\begin{eqnarray*}
e(G)&\leq& e(K_{|V_1|,|V_2|,|V_3|})-2(|V_2|+|V_3|)+d_G(u,V_2\cup V_3)+d_G(v_1,V_2\cup V_3)\\
& & -(s-1)(|V_2|+|V_3|)+\sum^{s}_{i=2} d_G(v_i,V_2\cup V_3)+s\\
&\leq& e(K_{|V_1|-1,|V_2|,|V_3|})-3s+4\leq e(T_{n-1,3})-2,
\end{eqnarray*}
which contradicts \eqref{eq1}. Hence there exists some $j\in \{1,2,\ldots, s\}$ such that $N_G(u,V_2\cup V_3)\cap N_G(v_{j},V_2\cup V_3)\neq \emptyset.$

Without loss of generality, suppose that $w'\in N_G(u,V_2)\cap N_G(v_1,V_2).$ Recall that $E(N_G(u,V_2)\cap N_G(v_1,V_2), N_G(u,V_3))=\emptyset.$ Let $G'=G-\{uw'\}+\{w'x:x\in N_G(u,V_3)\}.$ It follows from \eqref{eq21} that $d_G(u,V_3)\geq 2.$ Then $e(G')>e(G).$
Next we prove that $G'$ is $W_{2k+2}$-free. Otherwise, $w'$ must be a vertex of a $W_{2k+2},$ say $W,$ in $G'.$ By $L=\{u\}$ and Lemma \ref{lem3.4}, $N_{G'}(w')\subseteq (V_1\cup V_3)\setminus\{u\},$ which implies that $G'[N_{G'}(w')]$ is bipartite. Then $w'$ cannot be the center of $W_{2k+2}$. Therefore, $w'$ must lie on the cycle of $W.$ Suppose that $w'_1, w'_2, w'_3$ are neighbors of $w'$ in $W.$ Then $w'_1, w'_2, w'_3\notin V_2\cup L.$ For $w'_i\in V_j,$ we have $d_G(w'_i,V_j)\leq 1.$ It follows that
\begin{eqnarray*}
d_G(w'_i,V_2)&=&d_G(w'_i)-d_G(w'_i,V_1)-d_G(w'_i,V_3)\\
&\geq& d_G(w'_i)-(n-|V_2|-|V_j|)-d_G(w'_i,V_j)\\
&\geq& \left(\frac{1}{3}-7\sqrt{\varepsilon}\right)n-1.
\end{eqnarray*}
By Lemmas \ref{lem2.2} and \ref{lem3.1} and \eqref{eq2}, we have
\begin{eqnarray*}
\left|\left(\bigcap^3_{i=1}N_G(w'_i,V_2)\right)\setminus V(W)\right|&\geq& 3\left(\frac{1}{3}-7\sqrt{\varepsilon}\right)n-2\left(\frac{1}{3}+2\sqrt{\varepsilon}\right)n-(2k+5)\\
&=& \left(\frac{1}{3}-25\sqrt{\varepsilon}\right)n-2k-5>0
\end{eqnarray*}
for sufficiently large $n.$ Hence there exists a vertex $w''\in \bigcap^3_{i=1} N_G(w'_i,V_2)\setminus V(W).$ Therefore, we can find a wheel $W_{2k+2}$ of $G$ by replacing $w'$ with $w''$ in $W,$ a contradiction. Hence $G'$ is $W_{2k+2}$-free.

Finally, we show that $G'$ is non-$3$-partite. Suppose to the contrary that $G'$ is $3$-partite. Let $\{V'_1, V'_2, V'_3\}$ be a partition of $V(G')$
such that $\sum_{i=1}^3e(G'[V'_i])=0.$
Let $R=L\cup\{w'\}=\{u,w'\}.$ According to the construction of $G',$
$E(G[V(G)\setminus R])=E(G'[V(G')\setminus R]).$ Therefore, by the argument in the proof of Lemma \ref{lem3.5} (ii), with $L$ replaced by $R,$ we assume that $V_1\setminus L\subseteq V'_1,$ $V_2\setminus (L\cup\{w'\})\subseteq V'_2$ and $V_3\setminus L\subseteq V'_3.$
Since $L=\{u\},$ we have $V_1\setminus\{u\}\subseteq V'_1,$ $V_2\setminus\{w'\}\subseteq V'_2$ and $V_3\subseteq V'_3.$ Note that $uv_i\in E(G')$ and $v_i\in V'_1$ for $i\in \{1,2,\ldots, s\}.$ Then $d_{G'}(u,V'_1)\geq s\geq 2.$ By \eqref{eq21}, we have $d_{G'}(u,V'_2) \geq d_G(u,V_2)-1 \geq 1$ and $d_{G'}(u,V'_3) = d_G(u,V_3) \geq 2.$ Hence $G'$ is a non-$3$-partite $W_{2k+2}$-free graph.

In summary, we have proved that $G'$ is a non-$3$-partite $W_{2k+2}$-free graph with $e(G') > e(G).$ This contradicts the maximality of $e(G).$ Therefore, $|L|\geq 2.$
\end{proof}

\begin{claim}\label{cla2}
$e(G[V_1])=1$ and $e(G[V_2])\geq 1.$
\end{claim}
\begin{proof}
Recall that $\sum^2_{i=1}e(G[V_i])\geq 2.$ If $e(G[V_1])=0,$ then $e(G[V_2])\geq 2.$ Let $uv\in E(G[V_2])$ and $w'\in V_1 \cap L.$ By Lemma \ref{lem3.4}, $u\in L$ or $v\in L.$ Without loss of generality, we may assume that $u\in L.$ Recall that $|V_2\cap L|\leq 1.$ Then $d_G(u,V_2)=e(G[V_2])\geq 2.$ Let $G'=G-\{w'x:x\in N_G(w')\}+\{w'x:x\in V(G)\setminus(V_1\cup L)\}.$
Combining Lemmas \ref{lem3.5} and \ref{lem3.7}, we have $N_G(u,V_1)=\{w'\}.$ Then $d_G(u,V_2)>d_G(u,V_1),$ which contradicts the maximality of $e_G(V_1,V_2,V_3).$
Suppose that $e(G[V_1])\geq 2.$ If $|V_1\cap L|=1,$ then by Claim \ref{cla1}, $|V_2\cap L|=1.$ Let $V_1\cap L=\{u\}$ and $V_2\cap L=\{w'\}.$ By the technique above, we have $N_G(u,V_2)=\{w'\}.$ Then $d_G(u,V_1)>d_G(u,V_2),$ which contradicts the maximality of $e_G(V_1,V_2,V_3).$
If $|V_1\cap L|\geq 2,$ then combining Lemma \ref{lem3.4} and $e(G[V_1])\geq 2,$ we can find a vertex $w'\in V_1\cap L$ and an edge $uv\in E(G[V_1])$ such that $w'\notin \{u,v\},$ which contradicts Lemma \ref{lem3.8}.
\end{proof}

Recall that $|V_2\cap L|\leq 1.$ It follows from $e(G[V_2])\geq 1$ and Lemma \ref{lem3.4} that $|V_2\cap L|= 1.$
By Claim \ref{cla2}, $|V_2\cap L|=1,$ and by Lemma \ref{lem3.8}, $|V_1\cap L|=1.$
Combining Claim \ref{cla2} and Lemma \ref{lem3.4}, there exist two edges $u'_1v'_1\in E(G[V_1])$ and $u'_2v'_2\in E(G[V_2]),$ where $\{u'_1,u'_2\}\subseteq L$ and $v'_2\notin L.$ By Lemma \ref{lem3.8}, $|V_1\cap L|=1.$ Hence $L=\{u'_1,u'_2\}.$ Let
\begin{eqnarray*}
G'=G-\{u'_2x:x\in N_G(u'_2)\}+\{u'_2x:x\in V(G)\setminus(V_1\cup L)\}.
\end{eqnarray*}
By Lemma \ref{lem3.5}, $G'$ is a $3$-partite graph and there exists a partition $\{V'_1, V'_2, V'_3\}$ of $V(G')$ such that $\sum^3_{i=1}e(G'[V'_i])=0$ and $V_i\setminus L\subseteq V'_i$ for each $i\in \{1,2,3\}.$ Since $u'_1v'_1\in E(G[V_1])$ and $V_1\cap L=\{u'_1\},$ we have $u'_1\notin V'_1.$ It follows from Lemma \ref{lem3.7} that $N_G(u'_1,V_2)=\{u'_2\}$ and $N_G(u'_1,V_1)=\{v'_1\}.$ Similarly, $N_G(u'_2,V_1)=\{u'_1\}$ and $N_G(u'_2,V_2)=\{v'_2\}.$ So $e_G\big((V_1\setminus\{u'_1\})\cup\{u'_2\}, (V_2\setminus\{u'_2\})\cup\{u'_1\}, V_3\big)>e_G(V_1,V_2,V_3),$ a contradiction.
\end{proof}

\begin{figure}[H]
\centering
\includegraphics[width=0.5\textwidth]{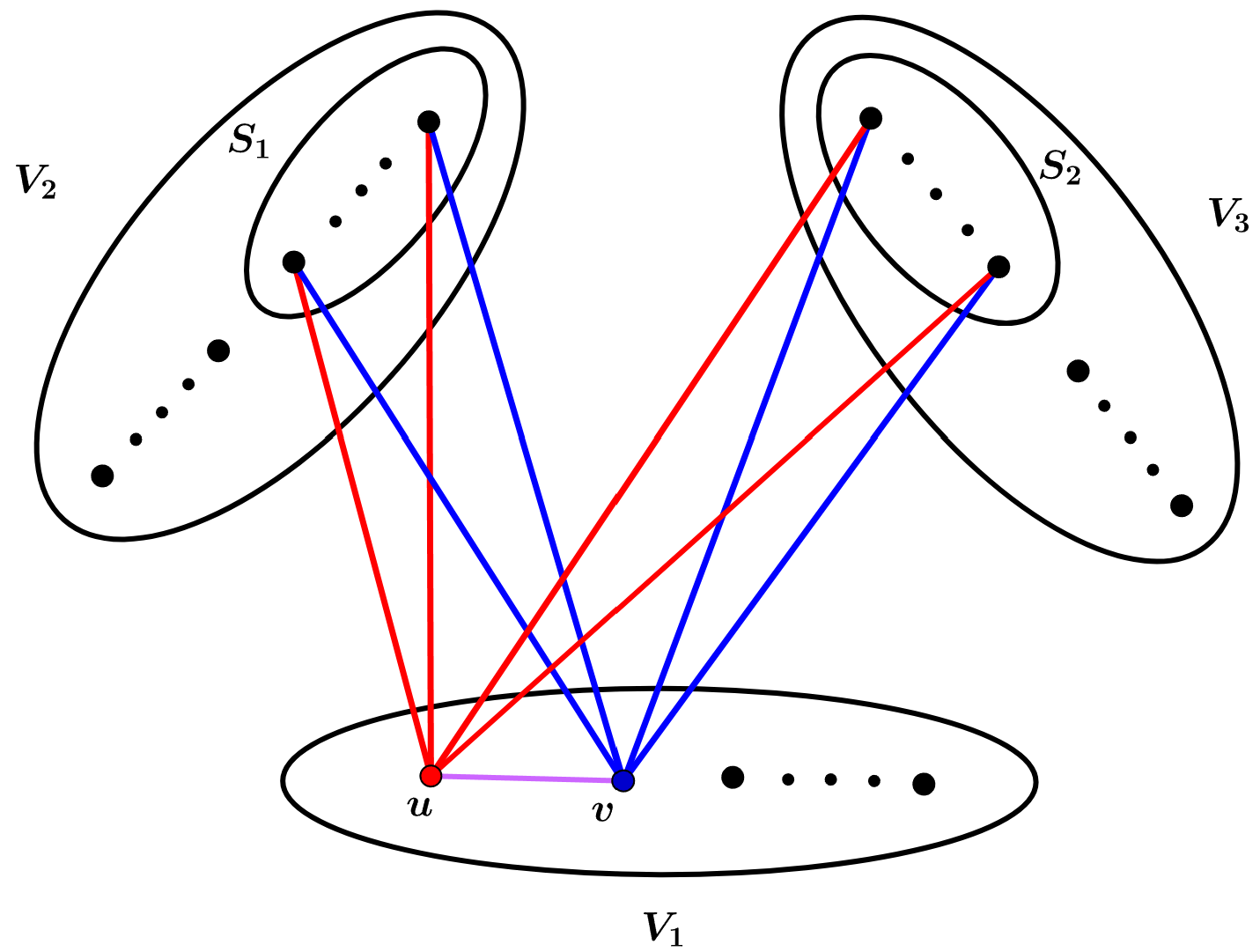}
\caption{Graph $G.$}\label{fig2}
\end{figure}

According to Lemma \ref{lem3.12}, $\sum_{i=1}^3 e(G[V_i]) =1.$ Let $\bigcup_{i=1}^3 E(G[V_i])=\{uv\}.$ By Lemma \ref{lem3.10}, $V_3\cap L=\emptyset.$ It follows from Lemma \ref{lem3.4} that $E(G[V_3]) = \emptyset.$ By relabeling the partition, we obtain $uv\in E(G[V_1])$ and $|N_G(u,V_2)\cap N_G(v,V_2)| \geq |N_G(u,V_3)\cap N_G(v,V_3)|.$ Then $V_1\cap L\neq \emptyset.$ According to Lemma \ref{lem3.10}, $V_2\cap L=\emptyset$ or $V_3\cap L=\emptyset.$
Let
\begin{eqnarray*}
S_1 =N_G(u,V_2)\cap N_G(v,V_2) \quad\text{and}\quad S_2 =N_G(u,V_3)\cap N_G(v,V_3).
\end{eqnarray*}

\begin{lemma}\label{lem3.13}
$|S_1|\geq 1$ and $|S_2|\leq 1.$
\end{lemma}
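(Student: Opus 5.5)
We prove $|S_2|\le 1$ first, then $|S_1|\ge 1$. Since $|S_1|\ge |S_2|$ by the choice of labelling, we only need to rule out $S_1=S_2=\emptyset$, after establishing $|S_2|\le 1$.

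\emph{Step 1: $|S_2|\le 1$.} Suppose $w_1,w_2\in S_2$ are distinct. If some $w\in S_1$ is adjacent to some $w_i$, we build a wheel centred at $w$:
\begin{itemize}
\item the rim uses the triangle-like detour $v,u,w_i$, closed by an odd path of length $2k-1$ from $w_i$ back to $v$ inside $N_G(w)$;
\item this path comes from Lemma \ref{lem3.11}(i) with $S=\{u\}$ and $a=2$, $b=1$, $c=3$.
\end{itemize}
This is the same argument as in Claim \ref{cla1}. It needs $w\notin L$, $v\notin L$ and $w_i\notin L$. By Lemma \ref{lem3.4} at most one of $u,v$ lies in $L$, so we take $v\notin L$. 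We also have $V_3\cap L=\emptyset$ and $|V_2\cap L|\le 1$.

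So essentially no edges join $S_1$ and $S_2$. We then aim for the same conclusion using $u$ or $v$ themselves as centre. Take the centre $v\notin L$. The rim goes $u, w_1, \dots$ and returns through a vertex of $N(v)\cap N(u)$.

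The cleaner route is counting. Use Lemma \ref{lem3.6} with $S$ being $L$ together with $\{u,v\}$. Then $|S_2|\ge 2$ forces $u$ and $v$ to share many common neighbours, which would produce an odd cycle of length $2k+1$ in $N(v)$:
\begin{itemize}
\item by Lemma \ref{lem3.11}(ii), $w_1$ and $w_2$ are joined by an even path of length $2k-2$ in $N(v)\cap V_2\cup V_3$ avoiding $u$;
\item closing it as $u\,w_1\cdots w_2\,u$ gives a cycle of length $2k-2+2=2k$, which is even, so this fails.
\end{itemize}
Instead, combine $w_1\in S_2$ with a neighbour $y\in N(u)\cap N(v)\cap V_2$ if one exists, via Lemma \ref{lem3.11}(i). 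The rim $u\,w_1\cdots y\,u$ with path length $2k-1$ has length $2k+1$, an odd cycle in $N(v)$, which is a contradiction.

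Hence $|S_2|\ge 1$ forces $S_1=\emptyset$, contradicting $|S_1|\ge|S_2|$. The same argument with the roles of $V_2$ and $V_3$ swapped shows that $S_1$ and $S_2$ cannot both be nonempty. Therefore $|S_2|=0\le 1$ as soon as $|S_1|\ge 1$.

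\emph{Step 2: $|S_1|\ge 1$.} Suppose $S_1=S_2=\emptyset$. Then
\[
d_G(u,V_2\cup V_3)+d_G(v,V_2\cup V_3)\le |V_2|+|V_3|.
\]
Apply Lemma \ref{lem3.6} with $S=\{u,v\}\cup L$, where $|L|\le 3$ and $L\subseteq V_1\cup V_2$. This bounds $e(G)$ by
\[
e(K_{|V_1|,|V_2|,|V_3|})-2(|V_2|+|V_3|)+(|V_2|+|V_3|)+O(1),
\]
with additional losses of order $\sqrt\varepsilon n$ for each further vertex of $L$. The main term is $e(K_{|V_1|-1,|V_2|,|V_3|})+O(1)$, so we must control the constant carefully.

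Here we do not use Lemma \ref{lem3.6} crudely. Instead we delete $u$: the graph $G-u$ is 3-partite, so $e(G)\le e(T_{n-1,3})+d_G(u)$. Since $u\in L$, and $v$'s neighbourhood in $V_2\cup V_3$ is disjoint from $u$'s, we get $d_G(u)\le |V_2|+|V_3|-d_G(v,V_2\cup V_3)+1$. Combining this with $d_G(v)>(2/3-5\sqrt\varepsilon)n$ gives $d_G(u)\le 8\sqrt\varepsilon n$. The bound then compares $e(K_{|V_1|,|V_2|,|V_3|})-(n-|V_1|)$ against $e(T_{n-1,3})$, and is strictly below $e(T_{n-1,3})+2$. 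This contradicts \eqref{eq1}.

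\emph{Main obstacle.} The delicate point is the exact additive constant in Step 2, since $e(G)\ge e(T_{n-1,3})+2$ is tight. To handle it, we first treat the case where $u\in L$ has small degree, which loses order $\sqrt\varepsilon n$ and is then easy. In the remaining case the loss is exactly $|V_2|+|V_3|$, leaving only $e(T_{n-1,3})+1$, which again contradicts \eqref{eq1}.
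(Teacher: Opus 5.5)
Your Step 1 has a genuine gap, and its conclusion is in fact false. You conclude that $S_1$ and $S_2$ cannot both be nonempty, i.e.\ $|S_2|=0$. But Lemma~\ref{lem3.15} shows $|S_2|=1$ (with $|S_1|\ge 1$) for every extremal $G$. Concretely, for $G^*(n,|V_2^*|)$ with the partition of its construction one has $S_1=V_2^*$ and $S_2=\{w\}$; that partition is a maximum $3$-cut, since only $uv$ lies inside a part.

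The error is in how you apply Lemma~\ref{lem3.11}, which needs the centre and both path endpoints to lie outside $L$.
\begin{itemize}
\item You justify $v\notin L$ by Lemma~\ref{lem3.4}, but that lemma only says that \emph{at least} one of $u,v$ lies in $L$. Both can: in $G^*(n,1)$ both have degree about $n/3$.
\item Nothing keeps the vertices of $S_1\cup S_2$ out of $L$ either. In $G^*(n,|V_2^*|)$ we have $d_G(v)+d_G(w)=n+1$, so $v$ and the vertex $w\in S_2$ are never both outside $L$. This is exactly why your rim $u\,w\cdots y\,u$ around $v$ does not exist.
\item After the relabelling that makes $|S_1|\ge|S_2|$, you only know that one of $V_2\cap L$, $V_3\cap L$ is empty. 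In $G^*(n,|V_2|-1)$ the vertex $w\in V_3$ lies in $L$.
\end{itemize}
A warning sign is that your argument never uses $|S_2|\ge 2$, although $|S_2|=1$ genuinely occurs.

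What is missing is an argument that exploits $|S_2|\ge 2$ (hence $|S_1|\ge 2$). It has three steps.
\begin{itemize}
\item Counting forces an edge $w_1w_2$ with $w_1\in S_1$, $w_2\in S_2$. Otherwise $e(G)\le e(T_{n-1,3})+|S_1|+|S_2|-|S_1||S_2|+1\le e(T_{n-1,3})+1$.
\item $L\subseteq\{u,v\}$. A vertex $x\in(V_2\cup V_3)\cap L$ would, by Lemmas~\ref{lem3.5} and~\ref{lem3.7}, give $N_G(u,V_j)=\{x\}$ or $N_G(v,V_j)=\{x\}$ for some $j\in\{2,3\}$. This is impossible since $|S_j|\ge 2$. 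Lemma~\ref{lem3.8} handles $V_1$. So $w_1,w_2\notin L$.
\item The wheel must not require $u$ or $v$ to avoid $L$. Suppose $w_3\in N_G(w_2,V_2\setminus\{w_1\})\cap N_G(u)$. Lemma~\ref{lem3.11}(ii) with $S=\{u,v\}$ gives an even $w_1$--$w_3$ path of length $2k-2$ in $N_G(w_2)\setminus\{u,v\}$. Then $w_3uvw_1\cdots w_3$ is a rim of length $2k+1$ around $w_2$, a contradiction.
\end{itemize}
Hence $u$ and $w_2$ have no common neighbour in $V_2\setminus\{w_1\}$. Since $w_2\notin L$, this forces $d_G(u,V_2)=O(\sqrt{\varepsilon}n)$. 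The same holds for $V_3$ and for $v$, giving $e(G)\le e(T_{n-1,3})-(|V_2|+|V_3|)+O(\sqrt{\varepsilon}n)$, which contradicts \eqref{eq1}.

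Your Step 2 is fine once reduced to the direct count in your final paragraph:
\[ e(G)\le e(K_{|V_1|,|V_2|,|V_3|})-(|V_2|+|V_3|)+1\le e(T_{n-1,3})+1. \]
This is the paper's argument and needs no case split. Drop the ``delete $u$'' detour. The bound $e(G)\le e(T_{n-1,3})+d_G(u)$ with $d_G(u)\le 8\sqrt{\varepsilon}n$ is not below $e(T_{n-1,3})+2$, and it again assumes $v\notin L$.
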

\begin{proof}
First, we show that $|S_1|\geq 1.$ Suppose to the contrary that $|S_1| = 0.$ Since $|S_1| \geq |S_2|,$ we have $|S_2| = 0.$ Consequently, $u$ and $v$ have no common neighbors in $V_2 \cup V_3,$ which implies that $d_G(u,V_2\cup V_3) + d_G(v,V_2\cup V_3) \leq |V_2|+|V_3|.$ By Lemma \ref{lem3.12}, we have
\begin{eqnarray*}
e(G) &\leq& e(K_{|V_1|, |V_2|, |V_3|}) -2(|V_2|+|V_3|)+ d_G(u,V_2\cup V_3) + d_G(v,V_2\cup V_3)+ 1 \\
&\leq& e(K_{|V_1|,|V_2|,|V_3|})-(|V_2|+|V_3|)+1 \\
&\leq& e(T_{n-1,3})+ 1,
\end{eqnarray*}
which contradicts \eqref{eq1}. Therefore, $|S_1|\geq 1.$

Next, we prove that  $|S_2|\leq 1.$ Suppose to the contrary that $|S_2|\geq 2.$ Since $|S_1|\geq |S_2|,$ we have $|S_1|\geq 2.$
\begin{claim}\label{cla3}
$e(S_1, S_2)\geq 1.$
\end{claim}
\begin{proof}
Suppose to the contrary that $e(S_1, S_2) = 0.$ Note that
\begin{eqnarray}\label{eq22}
d_G(u,V_2\cup V_3) + d_G(v,V_2\cup V_3) \leq |V_2| + |V_3| + |S_1| + |S_2|.
\end{eqnarray}
Since $|S_1|\geq 2$ and $|S_2|\geq 2,$ we have $(|S_1|-1)(|S_2|-1) \geq 1,$ which implies
\begin{eqnarray}\label{eq23}
|S_1|+|S_2|-|S_1||S_2|\leq 0.
\end{eqnarray}
Combining \eqref{eq22} and \eqref{eq23}, we have
\begin{eqnarray*}
e(G) &\leq& e(K_{|V_1|,|V_2|,|V_3|}) - |S_1||S_2| - 2(|V_2|+|V_3|) + (|V_2|+|V_3|+|S_1|+|S_2|) + 1 \\
&=& e(K_{|V_1|,|V_2|,|V_3|}) - |S_1||S_2| - (|V_2|+|V_3|) + |S_1| + |S_2| + 1 \\
&=& e(K_{|V_1|-1,|V_2|,|V_3|}) - |S_1||S_2| + |S_1| + |S_2| + 1\\
&\leq& e(T_{n-1,3}) + 1,
\end{eqnarray*}
which contradicts \eqref{eq1}.
\end{proof}

\begin{claim}\label{cla4}
$L\subseteq \{u,v\}.$
\end{claim}
\begin{proof}
We first prove that $L\subset V_1.$ Suppose to the contrary that there exists a vertex $w\in (V_2\cup V_3)\cap L.$ Let $G' = G-\{wx: x \in N_G(w)\} + \{wx: x \in V(G) \setminus (V_1 \cup L)\}.$ By Lemma \ref{lem3.5}, $G'$ is a $3$-partite graph and there exists a partition $\{V'_1, V'_2, V'_3\}$ of $V(G')$ such that $\sum^3_{i=1}e(G'[V'_i])=0$ and $V_i\setminus L\subseteq V'_i$ for each $i\in \{1,2,3\}.$ Since $uv\in E(G[V_1]),$ without loss of generality, suppose that $u\in V'_j$ with $j\in \{2,3\}.$ According to Lemma \ref{lem3.7}, we have $N_G(u,V_j)=\{w\},$ which contradicts that $|S_1|\geq |S_2|\geq 2.$ Hence $L\subset V_1.$ By Lemma \ref{lem3.8}, $(V_i\cap L)\setminus \{u,v\}=\emptyset.$ Therefore, $L\subseteq \{u,v\}.$
\end{proof}
By Claim \ref{cla3}, we assume that $w_1 \in S_1$ and $w_2 \in S_2$ such that $w_1w_2\in E(G).$

\begin{claim}\label{cla5}
$N_G(w_2,V_2\setminus \{w_1\})\cap N_G(u,V_2\setminus \{w_1\})=\emptyset$ and $N_G(w_1,V_3\setminus \{w_2\})\cap N_G(u,V_3\setminus \{w_2\})=\emptyset.$
\end{claim}
\begin{proof}
Suppose to the contrary that there exists a vertex $w_3\in N_G(w_2,V_2\setminus \{w_1\})\cap N_G(u,V_2\setminus \{w_1\}).$ It follows from Claim \ref{cla4} that $w_2\notin L.$ Let $S=\{u,v\}.$ Then $w_1,w_3\in N_G(w_2, V_2)\setminus (L\cup S).$ By Lemma \ref{lem3.11}, there exists an even path $P$ of length $2k-2$ between $w_1$ and $w_3$ such that $V(P)\subseteq N_G(w_2)\setminus S.$ Hence we find a copy of $W_{2k+2}$ with center vertex $w_2$ and rim $w_3uvw_1\cdots w_3$ in $G,$ a contradiction. By the technique above, $N_G(w_1,V_3\setminus \{w_2\})\cap N_G(u,V_3\setminus \{w_2\})=\emptyset.$
\end{proof}

Since $w_2\notin L,$ we have $d_G(w_2,V_2\setminus \{w_1\})\geq \big(\frac{2}{3}-5\sqrt{\varepsilon}\big)n-|V_1|-1.$ Combining Claim \ref{cla5} and Lemma \ref{lem3.1}, we have
\begin{eqnarray*}
d_G(u,V_2\setminus \{w_1\})\leq (|V_2|-1)-d_G(w_2,V_2\setminus \{w_1\})\leq (|V_1|+|V_2|)-\left(\frac{2}{3}-5\sqrt{\varepsilon}\right)n\leq 7\sqrt{\varepsilon}n,
\end{eqnarray*}
which implies that $d_G(u,V_2)\leq 7\sqrt{\varepsilon}n+1.$
Similarly, $d_G(u,V_3)\leq 7\sqrt{\varepsilon}n+1.$ Then $d_G(u,V_2\cup V_3)\leq 14\sqrt{\varepsilon}n+2.$
By symmetry, $d_G(v,V_2\cup V_3)\leq 14\sqrt{\varepsilon}n+2.$ Hence
\begin{eqnarray*}
e(G) &\leq& e(K_{|V_1|,|V_2|,|V_3|})-2(|V_2|+|V_3|)+d_G(u,V_2\cup V_3)+d_G(v,V_2\cup V_3)+1\\
&\leq& e(T_{n-1,3})-(|V_2|+|V_3|)+28\sqrt{\varepsilon}n+5 < e(T_{n-1,3})+2
\end{eqnarray*}
for sufficiently large $n,$ which contradicts \eqref{eq1}.
Hence $|S_2|\leq 1.$
\end{proof}

\begin{figure}[H]
\centering
\includegraphics[width=0.5\textwidth]{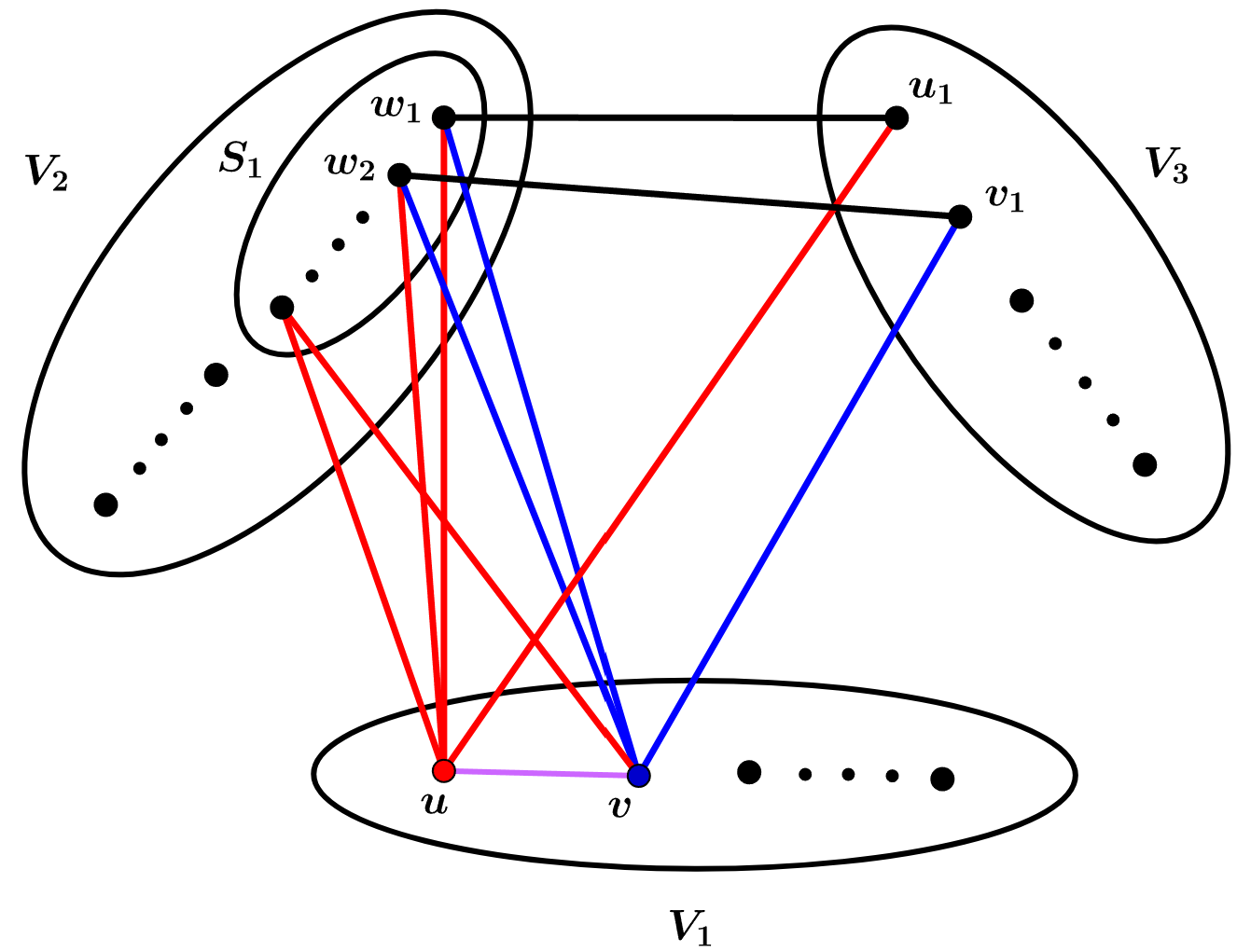}
\caption{Graph $G.$}\label{fig3}
\end{figure}

\begin{lemma}\label{lem3.14}
$E_G(S_1,N_G(u,V_3))=\emptyset$ or $E_G(S_1,N_G(v,V_3))=\emptyset.$
\end{lemma}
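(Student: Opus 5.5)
Suppose, for a contradiction, that both edge sets are nonempty. The plan is to show that a single edge from $S_1$ to $N_G(x_0,V_3)$, where $x_0$ is whichever of $u,v$ lies outside $L$, already produces a copy of $W_{2k+2}$ via Lemma \ref{lem3.11}\,(i). Before doing so, I pin down how $L$ meets $\{u,v\}$ and $S_1$.

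\textbf{Step 1: exactly one of $u,v$ lies in $L$.} By Lemma \ref{lem3.4}, at least one of $u,v$ lies in $L$. If both do, apply Lemma \ref{lem3.6} with $S=\{u,v\}$. Since $u,v\in V_1\cap L$, the estimate \eqref{eq18} gives $d_G(x)-(n-|V_1|)\le -3\sqrt{\varepsilon}n$ for $x\in\{u,v\}$. This yields
$$e(G)\le e(K_{|V_1|,|V_2|,|V_3|})-(n-|V_1|)-3\sqrt{\varepsilon}n+O(1)\le e(T_{n-1,3})-3\sqrt{\varepsilon}n+O(1),$$
which contradicts \eqref{eq1}. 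So, up to the symmetry between $u$ and $v$, I take $u\in L$ and $v\notin L$.

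\textbf{Step 2: no vertex of $S_1$ lies in $L$.} Suppose some $w_1\in S_1\cap L$. Then $w_1\in V_2\cap L$ and $uw_1\in E(G)$ with $u,w_1\in L$. Apply Lemma \ref{lem3.6} with $S=\{u,w_1\}$, exactly as in the proofs of Lemmas \ref{lem3.9} and \ref{lem3.10}. This gives
$$e(G)\le e(K_{|V_1|,|V_2|,|V_3|})-(n-|V_1|)-(n-|V_2|)+d_G(u)+d_G(w_1)+O(1)\le e(T_{n-1,3})-3\sqrt{\varepsilon}n+O(1),$$
again contradicting \eqref{eq1}. Hence $S_1\cap L=\emptyset$. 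Also $V_3\cap L=\emptyset$ by Lemma \ref{lem3.10}.

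\textbf{Step 3: the wheel.} Take an edge $w_1x\in E_G(S_1,N_G(u,V_3))$. Here $w_1\in V_2\setminus L$, and $x\in V_3\cap N_G(w_1)\setminus L$ is adjacent to $u$. Also $v\in V_1\cap N_G(w_1)\setminus L$, since $w_1\in S_1$. Apply Lemma \ref{lem3.11}\,(i) with center $w=w_1$ (so $a=2$), with $u_1=x$, $v_1=v$, $S=\{u\}$ and $\ell=k-1\ge1$. This gives an odd path $P$ of length $2k-1$ from $x$ to $v$ inside $N_G(w_1)\setminus\{u\}$. Then $u\,x\,P\,v\,u$ is a cycle of length $1+(2k-1)+1=2k+1$, and all of its vertices lie in $N_G(w_1)$, because $u\in N_G(w_1)$ as well. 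So $w_1$ together with this cycle forms a $W_{2k+2}$, a contradiction. Therefore $E_G(S_1,N_G(u,V_3))=\emptyset$. Had the roles been swapped ($v\in L$, $u\notin L$), the same argument would give $E_G(S_1,N_G(v,V_3))=\emptyset$. In either case the statement holds.

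The main obstacle I expect is Steps 1--2, i.e.\ making sure that the hypotheses of Lemma \ref{lem3.11} hold: the center must lie outside $L$, and so must both path endpoints. The counting must be done carefully in Lemma \ref{lem3.6}, with attention to which $K_{\cdot,\cdot,\cdot}$ terms and which $|S\setminus L||L\setminus S|$ corrections survive. If Step 2 turns out to be delicate because $V_2\cap L$ is nonempty, the fallback is to invoke Lemma \ref{lem3.7}. Using the modified graph $G'$ for $w_1\in L$, it should force $N_G(u,V_2)=\{w_1\}$. That in turn would contradict $d_G(u,V_2)\ge d_G(u,V_1)\ge1$ together with the degree count above.
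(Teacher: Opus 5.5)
There is a genuine gap. Your Step 3 is exactly the paper's opening move in Case 1.1: center $w_1$, rim $u_1uv\cdots u_1$, via Lemma \ref{lem3.11}(i). The problem is that the reductions meant to force this configuration do not hold.

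\textbf{Step 1 miscounts.} Lemma \ref{lem3.6} with $S=\{u,v\}\subseteq V_1\cap L$ gives $e(G)\le e(K_{|V_1|,|V_2|,|V_3|})-2(n-|V_1|)+d_G(u)+d_G(v)-1$. Applying \eqref{eq18} to both vertices only yields $e(G)\le e(K_{|V_1|,|V_2|,|V_3|})-6\sqrt{\varepsilon}n-1$. That is about $e(T_{n,3})$, far above $e(T_{n-1,3})+2$, since the difference is $n-\lceil n/3\rceil$. Your displayed bound would need $d_G(u)+d_G(v)\le n-|V_1|+O(1)$, which you do not have.

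No counting of this kind can work, because it never uses the contradiction hypothesis. Take $G^*(n,1)$ with its partition having one inner edge, which is a maximum $3$-cut. There $d(u)=|V_2|+2$ and $d(v)=|V_3|+2$, so both ends of the inner edge lie in $L$, while $e(G^*)=e(T_{n-1,3})+2$. The paper therefore handles $\{u,v\}\subseteq L$ structurally. It uses Case 1.2 (Claims \ref{cla9} and \ref{cla10}) and Case \ref{ca2}, where the rim $u_1uvv_1\cdots u_1$ passes through $u,v$ consecutively, so $S=\{u,v\}$ can be excluded in Lemma \ref{lem3.11}.

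\textbf{Step 2 has a similar problem.} The count with $S=\{u,w_1\}$ needs $d_G(u)=O(1)$. In Lemmas \ref{lem3.9} and \ref{lem3.10} this came from Lemma \ref{lem3.7} forcing $d_G(u)=3$; here you only know $d_G(u)\le(\frac23-5\sqrt{\varepsilon})n$. Your fallback does give $N_G(u,V_2)=\{w_1\}$ or $N_G(v,V_2)=\{w_1\}$, hence $|S_1|=1$. That is not contradictory by itself, since $d_G(u,V_2)=1\ge d_G(u,V_1)=1$ is consistent. You still need a separate count with $S=\{u,v,w_1\}$ using $|S_1|=1$ and $|S_2|\le 1$, as in the paper's Case \ref{ca2}.

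\textbf{The most serious issue is the claim that $V_3\cap L=\emptyset$ by Lemma \ref{lem3.10}.} Before Lemma \ref{lem3.13} the parts were relabeled so that $uv\in E(G[V_1])$ and $|S_1|\ge|S_2|$. After that relabeling the paper only knows that one of $V_2\cap L$ and $V_3\cap L$ is empty. So every edge from $S_1$ to $N_G(u,V_3)$ may end at a vertex $u_1\in V_3\cap L$, and then Lemma \ref{lem3.11} cannot be applied to your rim.

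This is precisely what most of Case 1.1 and Case 3 deal with, and the paper uses both edges $w_1u_1$ and $w_2v_1$ there:
\begin{enumerate}[\rm (i)]
\item it shows $vu_1\notin E(G)$ (Claim \ref{cla6}) via an even path between $w_1\neq w_2$ inside $N_G(v)$;
\item it obtains $N_G(u,V_3)=\{u_1\}$ from Lemma \ref{lem3.7};
\item it uses maximality of the cut to find $w''\in N_G(u_1,V_1\setminus\{u,v\})$;
\item it concludes with the count of at least $|V_2|-2$ non-edges from Claim \ref{cla8}.
\end{enumerate}
None of this appears in your proposal. As it stands, your argument covers only the configuration $u\in L$ and $v,w_1,u_1\notin L$.
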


\begin{proof}
\setcounter{case}{0}
Suppose to the contrary that $E_G(S_1,N_G(u,V_3))\neq \emptyset$ and $E_G(S_1,N_G(v,V_3))\neq \emptyset.$
Let $w_1u_1\in E_G(S_1,N_G(u,V_3))$ and $w_2v_1\in E_G(S_1,N_G(v,V_3)),$ where $w_1,w_2\in S_1,$ $u_1\in N_G(u,V_3)$ and $v_1\in N_G(v,V_3)$ (see Fig. \ref{fig3}). We divide our proof into the following cases.

\begin{case}
$w_1\neq w_2$ and $u_1\neq v_1.$
\end{case}
In this case, $|S_1|\geq 2.$ If $V_2\cap L\neq \emptyset,$ then we choose $w\in V_2\cap L.$ Let $G' = G - \{wx: x \in N_G(w)\} + \{wx: x \in V(G) \setminus (V_2 \cup L)\}.$ By Lemmas \ref{lem3.5} and \ref{lem3.7}, $N_G(u,V_2)=\{w\}$ or $N_G(v,V_2)=\{w\},$ which contradicts $|S_1|\geq 2.$
Then $V_2\cap L=\emptyset,$ which implies that $w_1,w_2\notin L.$ It follows from Lemma \ref{lem3.8} that $V_1\cap L\subseteq \{u,v\}.$ Note that $u\in L$ or $v\in L.$ By symmetry, we assume that $u\in L.$ We consider the following two subcases.

\begin{subcase}
$v\notin L.$
\end{subcase}
Suppose that $u_1\notin L.$
Recall that $w_1\notin L.$ Let $S=\{u\}.$ Then $v\in N_G(w_1,V_1)\setminus (L\cup S)$ and $u_1\in N_G(w_1,V_3)\setminus (L\cup S).$ By Lemma \ref{lem3.11}, there exists an odd path of length $2k-1$ between $u_1$ and $v$ in $N_G(w_1)\setminus S.$ Then we can find a $W_{2k+2}$ in $G$ with center vertex $w_1$ and rim $u_1uv\cdots u_1,$ a contradiction. Hence $u_1\in L.$ By Lemma \ref{lem3.8}, $V_3\cap L=\{u_1\}.$ Then $L=\{u,u_1\}.$
\begin{claim}\label{cla6}
$N_G(u,V_3)=\{u_1\}$ and $S_2=\emptyset.$
\end{claim}
\begin{proof}
Let $G' = G - \{u_1x: x \in N_G(u_1)\} + \{u_1x: x \in V(G) \setminus (V_3 \cup L)\}.$ Combining $u\in L,$ $v\notin L$ and Lemmas \ref{lem3.5} and \ref{lem3.7}, we have $N_G(u,V_3)=\{u_1\}.$
Next we prove that $vu_1\notin E(G).$
Suppose to the contrary that $vu_1\in E(G).$ Let $S=\{u,u_1\}.$ Since $v,w_1,w_2\notin L,$ by Lemma \ref{lem3.11}, there exists an even path $P$ of length $2k-2$ between $w_1$ and $w_2$  such that $V(P)\subseteq N_G(v)\setminus S.$ Hence we can find a $W_{2k+2}$ in $G$ with center vertex $v$ and rim $w_1u_1uw_2\cdots w_1,$ a contradiction.
Hence $S_2=\emptyset.$
\end{proof}

\begin{claim}\label{cla7}
$N_G(w_1,V_1)\cap N_G(u_1,V_1)=\{u\}.$
\end{claim}
\begin{proof}
By Claim \ref{cla6}, $u_1v\notin E(G).$ We claim that $(N_G(w_1,V_1)\cap N_G(u_1,V_1))\setminus \{u,v\}=\emptyset.$ Suppose to the contrary that there exists a vertex $w\in (N_G(w_1,V_1)\cap N_G(u_1,V_1))\setminus \{u,v\}.$ By the technique above, we can find a $W_{2k+2}$ in $G$ with center vertex $w_1$ and rim $wu_1uv\cdots w,$ a contradiction. Hence $N_G(w_1,V_1)\cap N_G(u_1,V_1)=\{u\}.$
\end{proof}

We claim that $N_G(u_1,V_1\setminus\{u,v\})\neq\emptyset.$
Suppose otherwise. Since $vu_1\notin E(G)$ by Claim \ref{cla6} and $uu_1\in E(G)$, we have $N_G(u_1,V_1)=\{u\}.$
It follows from Claim \ref{cla6} that $N_G(u,V_3)=\{u_1\}.$
Then $e_G((V_1\setminus\{u\})\cup\{u_1\},V_2,(V_3\setminus\{u_1\})\cup\{u\})>e_G(V_1,V_2,V_3),$ a contradiction.
Let $w''\in N_G(u_1,V_1\setminus\{u,v\}).$ Recall that $L=\{u,u_1\}.$ Then $w''\notin L.$

\begin{claim}\label{cla8}
For every $w'\in V_2\setminus\{w_1,w_2\},$ we have $\{w'u,w'v,w'u_1,w'w''\}\setminus E(G)\neq \emptyset.$
\end{claim}
\begin{proof}
Suppose otherwise. Let $S=\{u,u_1\}.$ Note that $w',v,w''\notin L.$ By Lemma \ref{lem3.11}, there exists an even path $P$ of length $2k-2$ between $v$ and $w''$ such that $V(P)\subseteq N_G(w')\setminus S.$ Hence we can find a $W_{2k+2}$ in $G$ with center vertex $w'$ and rim $w''u_1uv\cdots w'',$ a contradiction.
\end{proof}

According to Claim \ref{cla8}, we have $e_{\overline{G}}(V_2\setminus\{w_1,w_2\}, \{u,v,u_1,w''\})\geq |V_2|-2.$ Combining Claims \ref{cla6} and \ref{cla7}, we have
\begin{eqnarray*}
e(G)&\leq& e(K_{|V_1|,|V_2|,|V_3|})-\big(e_{K_{|V_1|,|V_2|,|V_3|}}(\{w_1,u_1\},V_1)+e_{K_{|V_1|,|V_2|,|V_3|}}(\{u,v\},V_3)-2\big)+(d_G(w_1,V_1)\\
& & +d_G(u_1,V_1)+d_G(u,V_3)+ d_G(v,V_3)-1)-e_{\overline{G}}(V_2\setminus\{w_1,w_2\}, \{u,v,u_1,w''\})+1\\
&\leq& e(K_{|V_1|,|V_2|,|V_3|})-|V_1|-|V_2|-|V_3|+5< e(T_{n,3})-\left(n-\left\lceil\frac{n}{3}\right\rceil\right)+2
\end{eqnarray*}
for sufficiently large $n,$ which contradicts \eqref{eq1}.

\begin{subcase}
$v\in L.$
\end{subcase}
Recall that $\{u,v\}\subseteq L.$ By Lemma \ref{lem3.9}, $|V_3\cap L|\leq 1.$ Then at most one of $u_1$ and $v_1$ belongs to $L.$
\begin{claim}\label{cla9}
Either $u_1\in L$ or $v_1\in L.$
\end{claim}
\begin{proof}
Suppose to the contrary that $u_1,v_1\notin L.$ Then we prove that
\begin{eqnarray}\label{eq24}
(N_G(u,V_3)\cap N_G(w_2,V_3))\setminus (L\cup \{v_1\})=\emptyset.
\end{eqnarray}
Suppose otherwise. Then there exists a vertex $w_0\in (N_G(u,V_3)\cap N_G(w_2,V_3))\setminus (L\cup \{v_1\}).$ Combining $w_0,w_2,v_1\notin L$ and Lemma \ref{lem3.11}, there exists an even path $P$ of length $2k-2$ between $w_0$ and $v_1$  such that $V(P)\subseteq N_G(w_2)\setminus \{u,v\}.$ Hence we can find a $W_{2k+2}$ in $G$ with center vertex $w_2$ and rim $v_1vuw_0\cdots v_1,$ which contradicts the assumption that $G$ is $W_{2k+2}$-free.
Combining \eqref{eq24} and Lemma \ref{lem2.2}, we have
\begin{eqnarray}\label{eq25}
e_G(\{u,w_2\}, V_3\setminus (L\cup \{v_1\}))\leq |V_3\setminus (L\cup \{v_1\})|.
\end{eqnarray}
By the technique above, we have $(N_G(v,V_3)\cap N_G(w_1,V_3))\setminus (L\cup \{u_1\})=\emptyset,$ and hence
\begin{eqnarray}\label{eq26}
e_G(\{v,w_1\}, V_3\setminus (L\cup \{u_1\}))\leq |V_3\setminus (L\cup \{u_1\})|.
\end{eqnarray}
Moreover, we show that $|S_1|\geq \frac{1}{2}|V_2|.$ Suppose to the contrary that $|S_1|< \frac{1}{2}|V_2|.$
It follows from Lemma \ref{lem2.2} that
\begin{eqnarray}\label{eq27}
e_G(\{u,v\}, V_2)<\frac{3}{2}|V_2|.
\end{eqnarray}
Combining \eqref{eq25}, \eqref{eq26} and \eqref{eq27} and Lemma \ref{lem3.1}, we have
\begin{eqnarray*}
e(G)&\leq& e(K_{|V_1|,|V_2|,|V_3|})-2|V_3\setminus (L\cup \{v_1\})|+e_G(\{u,w_2\}, V_3\setminus (L\cup \{v_1\}))-2|V_3\setminus (L\cup \{u_1\})|\\
& & +e_G(\{v,w_1\}, V_3\setminus (L\cup \{u_1\}))-2|V_2|+e_G(\{u,v\}, V_2)+1\\
&<& e(K_{|V_1|,|V_2|,|V_3|})-2(|V_3|-2)-\frac{1}{2}|V_2|+1\\
&<& e(T_{n,3})-\left(n-\left\lceil\frac{n}{3}\right\rceil\right)+2
\end{eqnarray*}
for sufficiently large $n,$ which contradicts \eqref{eq1}. Hence $|S_1|\geq \frac{1}{2}|V_2|.$ Recall that $V_2\cap L=\emptyset$ and $u_1,v_1\notin L.$
Combining Lemmas \ref{lem2.2} and \ref{lem3.1} and \eqref{eq2}, we have
\begin{eqnarray*}
|S_1\cap N_G(u_1,V_2)\cap N_G(v_1,V_2)|&\geq& \frac{1}{2}|V_2|+d_G(u_1,V_2)+d_G(v_1,V_2)-2|V_2|\\
&\geq& d_G(u_1)+d_G(v_1)-2|V_1|-\frac{3}{2}|V_2|\\
&>& \left(\frac{1}{6}-17\sqrt{\varepsilon}\right)n>0.
\end{eqnarray*}
Suppose that $w'\in S_1\cap N_G(u_1,V_2)\cap N_G(v_1,V_2).$
Combining $w',u_1,v_1\notin L$ and Lemma \ref{lem3.11}, there exists an even path $P$ of length $2k-2$ between $u_1$ and $v_1$ such that $V(P)\subseteq N_G(w')\setminus \{u,v\}.$ Then we can find a $W_{2k+2}$ in $G$ with center vertex $w'$ and rim $u_1uvv_1\cdots u_1,$ a contradiction.
\end{proof}

According to Claim \ref{cla9}, without loss of generality, we suppose that $u_1\in L,$ which implies that $L=\{u,v,u_1\}.$
Let $G' = G - \{u_1x: x \in N_G(u_1)\} + \{u_1x: x \in V(G) \setminus (V_3 \cup L)\}.$ By Lemma \ref{lem3.5}, $G'$ is a $3$-partite graph and there exists a partition $\{V'_1, V'_2, V'_3\}$ of $V(G')$ such that $\sum^3_{i=1}e(G'[V'_i])=0$ and $V_i\setminus L\subseteq V'_i$ for each $i\in\{1,2,3\}.$ Since $uv,vv_1\in E(G),$ we have $u\in V'_{3}$ in $G'.$ It follows from Lemma \ref{lem3.7} that $N_G(u,V_3)=\{u_1\},$ which implies that
\begin{eqnarray}\label{eq28}
d_G(u,V_3)=1.
\end{eqnarray}

\begin{claim}\label{cla10}
$(N_G(v,V_3)\cap N_G(w_1,V_3))\setminus \{u_1,v_1\}=\emptyset.$
\end{claim}
\begin{proof}
Suppose to the contrary that there exists a vertex $w_3\in (N_G(v,V_3)\cap N_G(w_1,V_3))\setminus \{u_1,v_1\}.$ Then $w_3\notin L.$ Suppose that $N_G(w_1,V_1\setminus \{u,v\})\cap N_G(u_1,V_1\setminus \{u,v\})\cap N_G(w_3,V_1\setminus \{u,v\})=\emptyset.$ By Lemma \ref{lem2.2}, we have
\begin{eqnarray}\label{eq29}
d_G(w_1,V_1\setminus \{u,v\})+d_G(u_1,V_1\setminus \{u,v\})+d_G(w_3,V_1\setminus \{u,v\})\leq 2|V_1\setminus \{u,v\}|.
\end{eqnarray}
Combining $v\in L,$ \eqref{eq28} and \eqref{eq29}, we have
\begin{eqnarray*}
e(G)&\leq& e(K_{|V_1|,|V_2|,|V_3|})-|V_3|+d_G(u,V_3)-3|V_1\setminus\{u,v\}|+d_G(w_1,V_1\setminus \{u,v\})\\
& & +d_G(u_1,V_1\setminus \{u,v\})+d_G(w_3,V_1\setminus \{u,v\})-(|V_2|+|V_3|)+d_G(v)+1\\
&\leq& e(K_{|V_1|,|V_2|,|V_3|})-(|V_3|-1)-(|V_1|-2)-3\sqrt{\varepsilon}n+1\\
&\leq& e(T_{n-1,3})-3\sqrt{\varepsilon}n+4<e(T_{n-1,3})+2
\end{eqnarray*}
for sufficiently large $n,$ contradicting \eqref{eq1}. Hence there exists a vertex $w_4\in N_G(w_1,V_1\setminus \{u,v\})\cap N_G(u_1,V_1\setminus \{u,v\})\cap N_G(w_3,V_1\setminus \{u,v\}).$ If $k=2,$ then we can find a $W_{2k+2}$ in $G$ with center vertex $w_1$ and rim $w_3vuu_1w_4w_3,$ which contradicts that $G$ is $W_{2k+2}$-free.
Suppose that $k\geq 3.$ Note that $w_1, w_3,w_4\notin L.$ By Lemma \ref{lem3.11}, there exists an odd path $P$ of length $2k-3$ between $w_3$ and $w_4$  such that $V(P)\subseteq N_G(w_1)\setminus \{u,v,u_1\}.$ Then we can find a $W_{2k+2}$ in $G$ with center vertex $w_1$ and rim $w_3vuu_1w_4\cdots w_3,$ a contradiction.
\end{proof}

By Claim \ref{cla10}, we obtain that
\begin{eqnarray}\label{eq30}
d_G(v,V_3\setminus \{u_1,v_1\})+d_G(w_1,V_3\setminus \{u_1,v_1\})\leq |V_3|-2.
\end{eqnarray}
Combining $u_1\in L,$ \eqref{eq28} and \eqref{eq30}, we have
\begin{eqnarray*}
e(G)&\leq& e(K_{|V_1|,|V_2|,|V_3|})-|V_3|-2|V_3\setminus \{u_1,v_1\}|+d_G(u,V_3)+d_G(v,V_3\setminus \{u_1,v_1\})\\
& & +d_G(w_1,V_3\setminus \{u_1,v_1\})-(|V_1|+|V_2|)+d_G(u_1)+1\\
&\leq& e(K_{|V_1|,|V_2|,|V_3|})-2|V_3|+3-(|V_1|+|V_2|)+d_G(u_1)+1\\
&=& e(K_{|V_1|-1,|V_2|,|V_3|})-(|V_1|+|V_3|)+d_G(u_1)+4\\
&\leq& e(T_{n-1,3})-3\sqrt{\varepsilon}n+4<e(T_{n-1,3})+2
\end{eqnarray*}
for large enough $n,$ which contradicts \eqref{eq1}.

\begin{case}\label{ca2}
$w_1=w_2$ and $u_1\neq v_1.$
\end{case}

Suppose that $(V_2\cup V_3)\cap L=\emptyset.$ Combining $w_1,u_1,v_1\notin L$ and Lemma \ref{lem3.11}, there exists an even path of length $2k-2$ between $u_1$ and $v_1$ such that $V(P)\subseteq N_G(w_1)\setminus \{u,v\}.$ Hence one can find a $W_{2k+2}$ in $G$ with center vertex $w_1$ and rim $u_1uvv_1\cdots u_1,$ a contradiction. Hence $(V_2\cup V_3)\cap L\neq \emptyset.$ Then there exists some $j\in \{2,3\}$ such that $V_j\cap L\neq \emptyset$ and we assume that $w\in V_j\cap L.$

\begin{claim}\label{cla11}
$|S_1|=1.$
\end{claim}

\begin{proof}
Suppose to the contrary that $|S_1|\geq 2.$ Similarly to Case $1,$ we have $V_2\cap L=\emptyset.$ Since $(V_2\cup V_3)\cap L\neq\emptyset,$ we may choose $w\in V_3\cap L.$ If $e_G(S_1\setminus\{w_1\},\{u_1,v_1\})\neq 0,$ then this reduces to Case $1.$ Hence $e_G(S_1\setminus\{w_1\},\{u_1,v_1\})=0.$
Since $u_1\neq v_1,$ we choose $z\in\{u_1,v_1\}\setminus\{w\}.$ Then $e_G(S_1\setminus\{w_1\},\{z\})=0,$ which implies that $$e_{\overline{G}}(S_1\setminus\{w_1\},\{z\})=|S_1|-1.$$ Note that $e_G(\{u,v\},V_2\cup V_3) \leq |V_2|+|V_3|+|S_1|+|S_2|.$
Combining $w\in V_3\cap L$ and Lemmas \ref{lem3.1},
\ref{lem3.12} and \ref{lem3.13}, we have
\begin{eqnarray*}
e(G)&\leq& e(K_{|V_1|,|V_2|,|V_3|})-2(|V_2|+|V_3|)+e_G(\{u,v\},V_2\cup V_3)-(n-|V_3|-2)\\
& &+d_G(w,V(G)\setminus \{u,v\})-e_{\overline{G}}(S_1\setminus\{w_1\},\{z\})+1\\
&\leq& e(K_{|V_1|,|V_2|,|V_3|})-(|V_2|+|V_3|)+|S_2|-(n-|V_3|)+d_G(w)+4\\
&\leq& e(K_{|V_1|-1,|V_2|,|V_3|})-3\sqrt{\varepsilon}n+5\\
&\leq& e(T_{n-1,3})-3\sqrt{\varepsilon}n+5\\
&<& e(T_{n-1,3})+2
\end{eqnarray*}
for sufficiently large $n,$ contradicting \eqref{eq1}.
\end{proof}

Combining Claim \ref{cla11} and $|S_2|\leq 1,$ we have $$d_G(u,V_2\cup V_3)+d_G(v,V_2\cup V_3)\leq |V_2|+|V_3|+2.$$ It follows from $w\in V_j\cap L$ that
\begin{eqnarray*}
e(G)&\leq& e(K_{|V_1|,|V_2|,|V_3|})-2(|V_2|+|V_3|)+e_G(\{u,v\},V_2\cup V_3)-(n-|V_j|-2)\\
&&+d_G(w,V(G)\setminus \{u,v\})+1\\
&\leq& e(K_{|V_1|-1,|V_2|,|V_3|})-(n-|V_j|)+d_G(w,V(G))+5\\
&\leq& e(T_{n-1,3})-3\sqrt{\varepsilon}n+5< e(T_{n-1,3})+2
\end{eqnarray*}
for large enough $n,$ which contradicts \eqref{eq1}.

\begin{case}
$w_1\neq w_2$ and $u_1= v_1.$
\end{case}
In this case, we have $|S_1|\geq 2.$ Similarly to Case $1$, we have $V_2\cap L=\emptyset.$
If $u_1\notin L,$ then by Lemma \ref{lem3.11}, there exists an even path of length $2k-2$ between $w_2$ and $w_1$  such that $V(P)\subseteq N_G(u_1)\setminus \{u,v\}.$ Then we can find a $W_{2k+2}$ with center vertex $u_1$ and rim $w_1uvw_2\cdots w_1,$ a contradiction. Hence $u_1\in L.$ For any $w\in S_1,$ we claim that $N_G\big(w,(N_G(u,V_3)\cup N_G(v,V_3))\setminus \{u_1\}\big)=\emptyset$ for otherwise, it reduces to Case $1.$
Note that $N_G(u,V_3)\cap N_G(v,V_3)=\{u_1\}.$ Then
\begin{eqnarray}\label{eq31}
e_G(\{u,v,w\},V_3\setminus \{u_1\})\leq |V_3|-1.
\end{eqnarray}
Combining \eqref{eq31}, $u_1\in L$ and Lemma \ref{lem3.1}, we deduce that
\begin{eqnarray*}
e(G)&\leq& e(K_{|V_1|,|V_2|,|V_3|})-3(|V_3|-1)+e_G(\{u,v,w\},V_3\setminus \{u_1\})-(|V_1|+|V_2|)+d_G(u_1)+1\\
&\leq& e(K_{|V_1|-1,|V_2|,|V_3|})-3\sqrt{\varepsilon}n+3< e(T_{n-1,3})+2
\end{eqnarray*}
for large enough $n,$ which contradicts \eqref{eq1}.

\begin{case}
$w_1=w_2$ and $u_1=v_1.$
\end{case}

In this case, $u_1\in S_2.$ By Lemma \ref{lem3.13}, $S_2=\{u_1\}.$ We first claim that
\begin{eqnarray}\label{eq32}
e_G\big(S_1,N_G(u,V_3)\cup N_G(v,V_3)\big)=1.
\end{eqnarray}
Suppose to the contrary that there exists an edge $w'u'\in E_G\big(S_1,N_G(u,V_3)\cup N_G(v,V_3)\big)$ such that $w'u'\neq w_1u_1.$ If $w'=w_1,$ then $u'\neq u_1.$ Since $w_1\in S_1,$ $S_2=\{u_1\},$ $u'\in N_G(u,V_3)\cup N_G(v,V_3)$ and $\{w_1u_1, w_1u'\}\subset E(G),$ it reduces to Case $2.$ If $u'=u_1,$ then $w'\neq w_1.$ Since $\{w', w_1\}\subset S_1,$ $S_2=\{u_1\},$ and $\{w_1u_1, w'u_1\}\subset E(G),$ it reduces to Case $3.$ If $w'\neq w_1$ and $u'\neq u_1,$ then by $\{w', w_1\}\subset S_1,$ $S_2=\{u_1\},$ $u'\in N_G(u,V_3)\cup N_G(v,V_3)$ and $\{w_1u_1, w'u'\}\subset E(G),$ it reduces to Case $1.$ Hence \eqref{eq32} holds. Note that
\begin{eqnarray*}
e_G(\{u,v\},V_2\cup V_3)
&=& |N_G(u,V_2)\cup N_G(v,V_2)|+|S_1|+|N_G(u,V_3)\cup N_G(v,V_3)|+|S_2|\\
&\leq& |V_2|+|S_1|
   +|N_G(u,V_3)\cup N_G(v,V_3)|+1.
\end{eqnarray*}
Combining \eqref{eq32} and Lemma \ref{lem3.12}, we have
\begin{eqnarray*}
e(G) &\leq& e(K_{|V_1|,|V_2|,|V_3|})-2(|V_2|+|V_3|)+e_G(\{u,v\},V_2\cup V_3)-|S_1||N_G(u,V_3)\cup N_G(v,V_3)|\\
& & +e_G\big(S_1,N_G(u,V_3)\cup N_G(v,V_3)\big)+1\\
&\leq& e(K_{|V_1|,|V_2|,|V_3|})-2(|V_2|+|V_3|)+(|V_2|+|S_1|+|N_G(u,V_3)\cup N_G(v,V_3)|+1)\\
& &-|S_1||N_G(u,V_3)\cup N_G(v,V_3)|+2\\
&=& e(K_{|V_1|-1,|V_2|,|V_3|}) -|V_3|+4-(|S_1|-1) \big(|N_G(u,V_3)\cup N_G(v,V_3)|-1\big)\\
&\leq& e(T_{n-1,3})-|V_3|+4<e(T_{n-1,3})+2
\end{eqnarray*}
for sufficiently large $n,$ which contradicts \eqref{eq1}.
\end{proof}

\begin{lemma}\label{lem3.15}
$1\leq |S_1|\leq |V_2|-1$ and $|S_2|=1.$
\end{lemma}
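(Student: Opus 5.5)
The plan is to reuse the edge-counting template from Lemma \ref{lem3.13}. Lemma \ref{lem3.13} already gives $|S_1|\geq 1$ and $|S_2|\leq 1$, so two things remain: $|S_2|\geq 1$ and $|S_1|\leq |V_2|-1$.

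\textbf{The basic bound.} By Lemma \ref{lem3.12}, $uv$ is the only edge inside the parts. Every vertex of $V_2\cup V_3$ is adjacent to at most two of $u,v$, and to both exactly when it lies in $S_1\cup S_2$. Hence
$$e_G(\{u,v\},V_2\cup V_3)\leq |V_2|+|V_3|+|S_1|+|S_2|.$$
Since $e(K_{|V_1|,|V_2|,|V_3|})-(|V_2|+|V_3|)=e(K_{|V_1|-1,|V_2|,|V_3|})\leq e(T_{n-1,3})$ by Lemma \ref{lem2.3}, this gives
$$e(G)\leq e(T_{n-1,3})+|S_1|+|S_2|+1-e_{\overline{G}}(V_2,V_3).$$
Here the last term counts missing edges between $V_2$ and $V_3$.

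\textbf{Showing $|S_2|=1$.} By Lemma \ref{lem3.14}, and by the symmetry between $u$ and $v$, I may assume $E_G(S_1,N_G(u,V_3))=\emptyset$. This produces at least $|S_1|\,d_G(u,V_3)$ missing edges between $V_2$ and $V_3$. By the maximality of $e_G(V_1,V_2,V_3)$ we have $d_G(u,V_3)\geq d_G(u,V_1)\geq 1$. Therefore
$$e(G)\leq e(T_{n-1,3})+|S_2|+1-|S_1|\bigl(d_G(u,V_3)-1\bigr)\leq e(T_{n-1,3})+|S_2|+1.$$
Comparing with \eqref{eq1} forces $|S_2|\geq 1$. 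Together with Lemma \ref{lem3.13}, this gives $|S_2|=1$.

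\textbf{Showing $|S_1|\leq |V_2|-1$.} Suppose instead that $S_1=V_2$, and write $S_2=\{x\}$. Then $x$ lies in both $N_G(u,V_3)$ and $N_G(v,V_3)$. So whichever alternative of Lemma \ref{lem3.14} holds, $x$ has no neighbour in $S_1=V_2$. Also $d_G(x,V_3)=0$ by Lemma \ref{lem3.12}. Hence
$$d_G(x)\leq |V_1|\leq \left(\frac13+2\sqrt{\varepsilon}\right)n<\left(\frac23-5\sqrt{\varepsilon}\right)n,$$
so $x\in L\cap V_3$. This contradicts Lemma \ref{lem3.10}. (Plain counting alone would only give $e(G)\leq e(T_{n-1,3})+2$, which is no contradiction, so the argument needs the detour through $L$.)

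\textbf{Main obstacle.} The step needing the most care is choosing the right alternative of Lemma \ref{lem3.14} and checking that $d_G(u,V_3)\geq 1$ (or $d_G(v,V_3)\geq 1$) in the case actually used. The symmetric case is handled identically with $u$ and $v$ swapped.
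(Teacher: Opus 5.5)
Your proof of $|S_2|=1$ is correct and is the paper's own counting argument; the paper phrases it as a contradiction from $|S_2|=0$. Your argument for $|S_1|\le |V_2|-1$, however, has a genuine gap at its last step: you cannot invoke Lemma \ref{lem3.10} here to get $V_3\cap L=\emptyset$. Lemma \ref{lem3.10} was proved under the normalization $|V_1\cap L|\ge |V_2\cap L|\ge |V_3\cap L|$. Right after Lemma \ref{lem3.12}, the parts are relabeled so that $uv\in E(G[V_1])$ and $|S_1|\ge |S_2|$. After this relabeling, the third part may well contain a vertex of $L$, and the paper retains only ``$V_2\cap L=\emptyset$ or $V_3\cap L=\emptyset$.'' The proof of Lemma \ref{lem3.14} in fact handles vertices of $V_3\cap L$ explicitly: $u_1\in V_3\cap L$ in Case 1, and $w\in V_3\cap L$ in Claim~\ref{cla11}. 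So showing $x\in V_3\cap L$ yields no contradiction. As you noted yourself, counting alone only gives $e(G)\le e(T_{n-1,3})+2$, and equality is not excluded when $N_G(u,V_3)=\{x\}$. So your route through $L$ does not close.

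The missing idea is to use the non-$3$-partiteness of $G$ directly. Assume $S_1=V_2$, and by Lemma \ref{lem3.14} assume without loss of generality that $E_G(V_2,N_G(u,V_3))=\emptyset$. Since $uv$ is the only edge inside the parts (Lemma \ref{lem3.12}), each of the following three sets is independent:
\begin{itemize}
\item $V_1\setminus\{u\}$, because $u$ has been removed;
\item $V_2\cup N_G(u,V_3)$, because $V_2$ and $V_3$ are independent and there are no $V_2$--$N_G(u,V_3)$ edges;
\item $(V_3\setminus N_G(u,V_3))\cup\{u\}$, because $u$ has no neighbour there by definition.
\end{itemize}
These sets therefore form a proper $3$-partition of $G$, a contradiction. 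This is the paper's argument. It needs neither $|S_2|=1$ nor any information about $L$.
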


\begin{proof}
By Lemma \ref{lem3.13}, $|S_1|\geq 1.$
We first prove that $|S_1|\leq |V_2|-1.$
Suppose to the contrary that $S_1=V_2.$ It follows from Lemma \ref{lem3.14} that $E(V_2,N_G(u,V_3))=\emptyset$ or $E(V_2,N_G(v,V_3))=\emptyset.$ Without loss of generality, we may assume that $E(V_2,N_G(u,V_3))=\emptyset.$ Then $G$ is a $3$-partite graph with partite sets $V_1\setminus \{u\}, V_2\cup N_G(u,V_3)$ and $(V_3\setminus N_G(u,V_3))\cup \{u\}$, a contradiction.

It follows from Lemma \ref{lem3.13} that $|S_2|\leq 1.$ Next we show that $|S_2|=1.$ Suppose to the contrary that $|S_2|=0.$ Recall that $d_G(u,V_3)\geq 1$ and $d_G(v,V_3)\geq 1.$ By Lemma \ref{lem3.14}, $E(S_1,N_G(u,V_3))=\emptyset$ or $E(S_1,N_G(v,V_3))=\emptyset.$ Without loss of generality, we assume that $E(S_1,N_G(u,V_3))=\emptyset.$ Note that $d_G(u,V_2\cup V_3)+d_G(v,V_2\cup V_3)\leq |V_2|+|V_3|+|S_1|.$ Then
\begin{eqnarray*}
e(G)&\leq& e(K_{|V_1|,|V_2|,|V_3|})-2(|V_2|+|V_3|)+e_G(\{u,v\},V_2\cup V_3)-|S_1|d_G(u,V_3)+1\\
&\leq& e(K_{|V_1|-1,|V_2|,|V_3|})-|S_1|(d_G(u,V_3)-1)+1\\
&\leq& e(T_{n-1,3})+1,
\end{eqnarray*}
which contradicts \eqref{eq1}.
\end{proof}

Now we are ready to provide the proof of Theorem \ref{main1}.

\medskip
\noindent \textbf{Proof of Theorem \ref{main1}.}
By Lemma \ref{lem3.15}, $|S_2|= 1.$ Recall that $uv\in E(G[V_1]).$ Then we obtain that
\begin{eqnarray}\label{eq33}
e_G(\{u,v\},V_2\cup V_3)\leq |V_2|+|V_3|+|S_1|+1
\end{eqnarray}
with equality if and only if $N_G(u,V_2\cup V_3)\cup N_G(v,V_2\cup V_3)=V_2\cup V_3.$
By Lemma \ref{lem3.14}, without loss of generality, we may assume that $E(S_1,N_G(u,V_3))=\emptyset.$
It follows from \eqref{eq33} that
\begin{eqnarray}\label{eq34}
e(G)&\leq& e(K_{|V_1|,|V_2|,|V_3|})-2(|V_2|+|V_3|)+e_G(\{u,v\},V_2\cup V_3)-|S_1|d_G(u,V_3)+1\nonumber\\
&\leq& e(K_{|V_1|-1,|V_2|,|V_3|})-|S_1|(d_G(u,V_3)-1)+2
\end{eqnarray}
with equality if and only if
\begin{eqnarray}\label{eq35}
G[V(G)\setminus\{u,v\}]= K_{|V_1\setminus\{u,v\}|,|V_2|,|V_3|}-E(K_{|V_1\setminus\{u,v\}|,|V_2|,|V_3|}[S_1\cup N_G(u,V_3)])
\end{eqnarray}
and equality holds in \eqref{eq33}. By $uv\in E(G[V_1])$ and the maximality of $e_G(V_1,V_2,V_3),$ we have $d_G(u,V_3)\geq 1.$ It follows from \eqref{eq34} that
\begin{eqnarray}\label{eq36}
e(G)&\leq& e(K_{|V_1|-1,|V_2|,|V_3|})-|S_1|(d_G(u,V_3)-1)+2\nonumber\\
&\leq& e(K_{|V_1|-1,|V_2|,|V_3|})+2 \leq e(T_{n-1,3})+2
\end{eqnarray}
with equality if and only if $d_G(u,V_3)=1$ and $e(K_{|V_1|-1,|V_2|,|V_3|})= e(T_{n-1,3}).$
By \eqref{eq1} and \eqref{eq36}, $e(G)=e(T_{n-1,3})+2.$ Hence $d_G(u,V_3)=1,$ $e(K_{|V_1|-1,|V_2|,|V_3|})= e(T_{n-1,3}),$
\eqref{eq35} holds, equality holds in \eqref{eq33}, and $N_G(u,V_2\cup V_3)\cup N_G(v,V_2\cup V_3)=V_2\cup V_3.$
Combining $|S_2|=1,$ $d_G(u,V_3)=1$ and $N_G(u,V_2\cup V_3)\cup N_G(v,V_2\cup V_3)=V_2\cup V_3,$ we have
\begin{eqnarray}\label{eq37}
N_G(v,V_3)=V_3.
\end{eqnarray}
Since $e(K_{|V_1|-1,|V_2|,|V_3|})= e(T_{n-1,3}),$ we have $K_{|V_1|-1,|V_2|,|V_3|}\cong T_{n-1,3}.$ So we can deduce that $|V_1|=\big\lceil\frac{n-1}{3}\big\rceil+1$ or $\big\lfloor\frac{n-1}{3}\big\rfloor+1,$ and $\big||V_2|-|V_3|\big|\leq 1.$

\begin{claim}\label{cla12}
If $N_G(v,V_2)\neq S_1,$ then $d_G(v,V_2)\leq k-1.$
\end{claim}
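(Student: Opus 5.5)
The plan is to argue by contradiction and exhibit a copy of $W_{2k+2}$ centred at $v$. So assume $N_G(v,V_2)\neq S_1$ and $d_G(v,V_2)\geq k$.

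First I would collect the structure already established for the extremal $G$:
\begin{itemize}
\item By Lemma \ref{lem3.15}, $|S_2|=1$; write $S_2=\{w_0\}$.
\item We have $d_G(u,V_3)=1$, and $w_0\in S_2\subseteq N_G(u,V_3)$, so $N_G(u,V_3)=\{w_0\}$.
\item By \eqref{eq37}, $N_G(v,V_3)=V_3$.
\item We are in the normalization $E(S_1,N_G(u,V_3))=\emptyset$, so no vertex of $S_1$ is adjacent to $w_0$.
\item By \eqref{eq35}, $G[V(G)\setminus\{u,v\}]$ is the complete $3$-partite graph on $V_1\setminus\{u,v\},V_2,V_3$ with exactly the edges between $S_1$ and $\{w_0\}$ removed. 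In particular, every vertex of $V_2\setminus S_1$ is adjacent to $w_0$, and every vertex of $V_3\setminus\{w_0\}$ is adjacent to all of $V_2$.
\end{itemize}

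Next I choose the rim vertices. Since $N_G(v,V_2)\neq S_1$ and $S_1\subseteq N_G(v,V_2)$, there is some $x\in N_G(v,V_2)\setminus S_1$. By Lemma \ref{lem3.15}, $S_1\neq\emptyset$, so I fix $s\in S_1$. Because $d_G(v,V_2)\geq k$ and $k\geq 2$, I can pick distinct vertices $y_1,\dots,y_{k-2}\in N_G(v,V_2)\setminus\{x,s\}$. I also pick distinct vertices $z_1,\dots,z_{k-1}\in V_3\setminus\{w_0\}$, which is possible since $|V_3|$ is linear in $n$.

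The rim is the cycle
\[
C:\ u\,w_0\,x\,z_1\,y_1\,z_2\,y_2\cdots y_{k-2}\,z_{k-1}\,s\,u .
\]
I would check each edge in turn:
\begin{itemize}
\item $uw_0$ is an edge since $w_0\in N_G(u,V_3)$.
\item $w_0x$ is an edge since $x\in V_2\setminus S_1$.
\item Each edge between some $z_i$ and a vertex of $V_2$ is present since $z_i\neq w_0$.
\item $su$ is an edge since $s\in S_1$.
\end{itemize}
The vertex $w_0$ is used only next to $u$ and next to $x\notin S_1$, so the missing edges $S_1$--$w_0$ never occur. The cycle uses $k$ vertices of $V_2$ (namely $x,y_1,\dots,y_{k-2},s$), $k$ vertices of $V_3$ (namely $w_0,z_1,\dots,z_{k-1}$) and the vertex $u$. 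Hence it has length $2k+1$, and for $k=2$ it is simply $u\,w_0\,x\,z_1\,s\,u$.

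Finally, every rim vertex is adjacent to $v$: we have $uv\in E(G)$, all of $V_3\subseteq N_G(v)$, and the chosen $V_2$-vertices lie in $N_G(v,V_2)$ by construction. So $v$ together with $C$ forms a $W_{2k+2}$, contradicting that $G$ is $W_{2k+2}$-free; therefore $d_G(v,V_2)\leq k-1$.

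The only delicate point is routing around the missing $S_1$--$w_0$ edges. Entering $w_0$ from $u$ and leaving it to the non-common neighbour $x$ handles this, and closing the cycle through $s\in S_1$ back to $u$ gives the correct odd length.
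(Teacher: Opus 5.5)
Your proof is correct and is essentially the paper's argument: the paper builds the same wheel centred at $v$, with rim $v_1v'_1uv_2v'_2\cdots v_kv'_kv_1$. Read backwards from $u$, this is exactly your cycle $u\,w_0\,x\,z_1\,y_1\cdots z_{k-1}\,s\,u$: it passes through the unique vertex $w_0$ of $N_G(u,V_3)$, then a vertex of $N_G(v,V_2)\setminus S_1$, then alternates between $V_2$ and $V_3\setminus\{w_0\}$, and closes through a vertex of $S_1$ back to $u$, with all edges justified by \eqref{eq35} and \eqref{eq37}.
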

\begin{proof}
Suppose to the contrary that $d_G(v,V_2)\geq k.$ By Lemma \ref{lem3.15}, $|S_1|\geq 1.$
Let $v_1\in N_G(v,V_2)\setminus N_G(u,V_2),$ $v_2\in S_1,$ $\{v_3, \ldots, v_k\}\subset N_G(v,V_2)\setminus \{v_1,v_2\},$ $v'_1\in N_G(u,V_3)\cap N_G(v,V_3)$ and $\{v'_2, \ldots, v'_k\}\subset N_G(v,V_3)\setminus \{v'_1\}.$ Then $\{v'_2, \ldots, v'_k\}\cap N_G(u,V_3)=\emptyset.$ By \eqref{eq35}, we can find a $W_{2k+2}$ in $G$ with center vertex $v$ and rim $v_1v'_1uv_2v'_2\cdots v_kv'_kv_1,$ a contradiction.
\end{proof}

Next we prove that $N_G(v,V_2)=S_1.$
Suppose to the contrary that $N_G(v,V_2)\neq S_1.$ Then $N_G(v,V_2\setminus S_1)\neq \emptyset.$
By Claim \ref{cla12}, $|S_1|<d_G(v,V_2)\leq k-1.$ Let $N_G(u,V_3)=\{w\}.$ It follows from \eqref{eq35} that $N_G(w)=V_1\cup (V_2\setminus S_1).$ Then $N_G(w,V_2\setminus S_1)\cap N_G(v,V_2\setminus S_1)\neq \emptyset.$
Recall that $N_G(u,V_2\cup V_3)\cup N_G(v,V_2\cup V_3)=V_2\cup V_3.$ Since $d_G(v,V_2)\leq k-1$ and $S_1\subseteq N_G(v,V_2),$ we have $d_G(u,V_2\setminus S_1)>0.$ Hence $N_G(w,V_2\setminus S_1)\cap N_G(u,V_2\setminus S_1)\neq \emptyset.$
Let $w_1\in N_G(w,V_2\setminus S_1)\cap N_G(u,V_2\setminus S_1)$ and $w_2\in N_G(w,V_2\setminus S_1)\cap N_G(v,V_2\setminus S_1).$
Choose $k-2$ vertices $w_3,\ldots, w_k\in V_2\setminus (S_1\cup\{w_1,w_2\})$ and $k-1$ vertices $w'_3,\ldots, w'_{k+1}\in V_1\setminus \{u,v\}.$
Combining $N_G(w)=(V_1\cup V_2)\setminus S_1$ and \eqref{eq35}, we have $w_2w'_3w_3\cdots w'_kw_kw'_{k+1}w_1$ is a path in $N_G(w).$ Hence we obtain a $W_{2k+2}$ in $G$ with center vertex $w$ and rim $w_1uvw_2w'_3w_3\cdots w'_kw_kw'_{k+1}w_1,$ a contradiction.

Since $N_G(v,V_2)=S_1$ and $N_G(u,V_2\cup V_3)\cup N_G(v,V_2\cup V_3)=V_2\cup V_3,$ we have $N_G(u,V_2)=V_2$. Moreover, by \eqref{eq37}, $N_G(v,V_3)=V_3$. Since $d_G(u,V_3)=1$, let $N_G(u,V_3)=\{w\}$. Because $E_G(S_1,N_G(u,V_3))=\emptyset$, we have $E_G(S_1,\{w\})=\emptyset$.
Hence $G$ is obtained from $K_{|V_1|,|V_2|,|V_3|}$ by adding the edge $uv$ and deleting $E(V_2\setminus S_1,\{v\}),E(V_3\setminus\{w\},\{u\}),E(S_1,\{w\}).$
Since $|V_1|=\big\lceil\frac{n-1}{3}\big\rceil+1$ or $\big\lfloor\frac{n-1}{3}\big\rfloor+1,$ and $\big||V_2|-|V_3|\big|\leq 1,$ we have $G\in \{G^*(n,|S_1|): 1\leq |S_1|\leq |V_2|-1\} = \mathscr{G}(n).$ This completes the proof of Theorem \ref{main1}.
\hspace*{\fill}$\Box$

\section{Concluding remarks}
In this paper, we determined the exact Tur\'{a}n number and all extremal graphs for non-$3$-partite $W_{2k+2}$-free graphs, where $k\geq2$ and $n$ is sufficiently large. This gives a complete non-$3$-partite refinement of Tur\'{a}n theorem for the even wheel $W_{2k+2}$. One notable feature of our result is that the extremal family $\mathscr{G}(n)$ does not depend on the order of the forbidden even wheel. The reason is structural: for every graph $G\in \mathscr{G}(n)$ and every vertex $x\in V(G)$, the induced subgraph $G[N_G(x)]$ is bipartite. Hence no vertex-neighborhood of $G$ contains an odd cycle.
This observation shows that our construction of extremal graphs is stronger than what is needed for forbidding a single even wheel. Indeed, a graph contains no even wheel if and only if every vertex-neighborhood is bipartite. Since every graph in $\mathscr{G}(n)$ has this property, the graphs in $\mathscr{G}(n)$ are free of all even wheels simultaneously.

Let $\mathscr{W}_{\mathrm{even}}=\{W_{2k+2}:k\geq1\}$ be the family of all wheels with an even number of vertices. Denote by $\mathrm{ex}_4(n,\mathscr{W}_{\mathrm{even}})$ the maximum number of edges in a non-$3$-partite graph of order $n$ containing no member of $\mathscr{W}_{\mathrm{even}}$. The family of all such graphs is denoted by  $\mathrm{Ex}_4(n,\mathscr{W}_{\mathrm{even}})$. As an immediate consequence of Theorem~\ref{main1}, we obtain the following result.

\begin{theorem}\label{main2}
Let $n$ be sufficiently large. Then
\begin{eqnarray*}
\mathrm{ex}_4(n,\mathscr{W}_{\mathrm{even}})=e(T_{n,3})-\left(n-\left\lceil\frac n3\right\rceil\right)+2 \quad\text{and}\quad \mathrm{Ex}_4(n,\mathscr{W}_{\mathrm{even}})=\mathscr{G}(n).
\end{eqnarray*}
\end{theorem}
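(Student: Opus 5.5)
We will deduce Theorem~\ref{main2} from Theorem~\ref{main1} by a two-sided sandwich argument. Fix any $k\ge 2$, say $k=2$, and take $n$ large enough for Theorem~\ref{main1} to apply with this $k$.

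\emph{Upper bound.} A non-$3$-partite graph containing no member of $\mathscr{W}_{\mathrm{even}}$ is in particular non-$3$-partite and $W_{6}$-free. Hence
\[
\mathrm{ex}_4(n,\mathscr{W}_{\mathrm{even}})\le \mathrm{ex}_4(n,W_6)=e(T_{n,3})-\left(n-\left\lceil\tfrac n3\right\rceil\right)+2 .
\]

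\emph{Lower bound.} We check that every $G^*\in\mathscr{G}(n)$ is non-$3$-partite and contains no even wheel. Non-$3$-partiteness is already part of the content of Theorem~\ref{main1}, since $\mathscr{G}(n)=\mathrm{Ex}_4(n,W_6)$. To rule out every $W_{2k+2}$ at once, we use the fact that $G$ contains some $W_{2k+2}$ if and only if some vertex $x$ has an odd cycle in $G[N_G(x)]$. It therefore suffices to show that $G^*[N(x)]$ is bipartite for every vertex $x$, as asserted in the introduction. We verify this case by case.
\begin{itemize}
\item If $x\in V_1\setminus\{u,v\}$, then $N(x)\subseteq V_2\cup V_3$, which carries the bipartition $\{V_2,V_3\}$.
\item If $x=u$, then $N(u)=V_2\cup\{w,v\}$. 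Here $w$ has no neighbour in $V_2^*$, and $v$ has no neighbour in $V_2\setminus V_2^*$. The pieces can be $2$-coloured directly: put $V_2$ on one side and $\{v,w\}$ on the other, which works because $vw$ is an edge and $v,w$ are each adjacent only to the $V_2$ side.
\item If $x=v$, the argument is analogous, with $N(v)=V_2^*\cup V_3\cup\{u\}$. Use the classes $V_2^*\cup\{w\}$ and $(V_3\setminus\{w\})\cup\{u\}$; this needs $uw$ to be the only edge from $u$ into $V_3$ and $w$ to have no edge to $V_2^*$.
\item If $x\in V_2$ or $x\in V_3$, the neighbourhood lies inside the other two parts, possibly together with $u$ or $v$. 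Note that $uv$ lies inside $V_1$, so we must check that $u$ and $v$ are never both neighbours of $x$ together with a configuration that creates an odd cycle. For $x\in V_2^*$ this is handled by the missing edge $xw$. For $x\in V_2\setminus V_2^*$ it is handled because $x$ is not adjacent to $v$. For $x=w$ it is handled because $w$ sees neither $V_2^*$ nor $v$'s problematic side. For $x\in V_3\setminus\{w\}$ it is handled because $x$ is not adjacent to $u$.
\end{itemize}
These routine checks are the only step requiring care. With them, $\mathscr{G}(n)\subseteq\mathrm{Ex}$-candidates for $\mathscr{W}_{\mathrm{even}}$, and the lower bound equals the upper bound.

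\emph{Extremal family.} First, $\mathscr{G}(n)\subseteq \mathrm{Ex}_4(n,\mathscr{W}_{\mathrm{even}})$ by the lower bound. Conversely, any extremal graph for $\mathscr{W}_{\mathrm{even}}$ is non-$3$-partite, $W_6$-free, and has $\mathrm{ex}_4(n,W_6)$ edges. Hence it lies in $\mathrm{Ex}_4(n,W_6)=\mathscr{G}(n)$ by Theorem~\ref{main1}.
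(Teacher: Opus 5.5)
Your argument is the same as the paper's. Both proofs sandwich $\mathrm{ex}_4(n,\mathscr{W}_{\mathrm{even}})$ between $e(G^*)$ for $G^*\in\mathscr{G}(n)$ and $\mathrm{ex}_4(n,W_6)$, using Theorem~\ref{main1} with $k=2$ and the fact that every vertex-neighbourhood in $G^*$ is bipartite. Both then read off the extremal family from the characterisation part of Theorem~\ref{main1}. The paper simply quotes the bipartite-neighbourhood property from the Introduction without checking it. Your attempt to verify it is therefore extra, but one of your checks is wrong as written.

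\textbf{The case $x=u$.} You put $V_2$ on one side and $\{v,w\}$ on the other, but $vw\in E(G^*)$. In fact $u,v,w$ span a triangle, so your colouring is not a proper $2$-colouring. The graph $G^*[N(u)]$ is a double star: it consists of the edge $vw$, with $v$ joined to $V_2^*$ and $w$ joined to $V_2\setminus V_2^*$. The correct bipartition is $V_2^*\cup\{w\}$ versus $(V_2\setminus V_2^*)\cup\{v\}$.

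\textbf{The last bullet.} Your justifications here are impressionistic. It is cleaner to list the bipartitions explicitly.
\begin{itemize}
\item For $x\in V_2^*$, $N(x)=V_1\cup(V_3\setminus\{w\})$ splits as $V_1\setminus\{u\}$ versus $(V_3\setminus\{w\})\cup\{u\}$. This uses both $xw\notin E(G^*)$ and the fact that $u$ has no neighbour in $V_3\setminus\{w\}$.
\item For $x\in V_2\setminus V_2^*$, $N(x)=(V_1\setminus\{v\})\cup V_3$ splits as $V_1\setminus\{v\}$ versus $V_3$.
\item For $x=w$, $N(w)=V_1\cup(V_2\setminus V_2^*)$ splits as $V_1\setminus\{v\}$ versus $(V_2\setminus V_2^*)\cup\{v\}$.
\item For $x\in V_3\setminus\{w\}$, $N(x)=(V_1\setminus\{u\})\cup V_2$ splits as $V_1\setminus\{u\}$ versus $V_2$.
\end{itemize}
With these corrections the proof is complete.
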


\begin{proof}
Take any $G\in \mathscr{G}(n)$. As observed above, for every vertex $x\in V(G)$, the graph $G[N_G(x)]$ is bipartite. Then $G[N_G(x)]$ contains no odd cycle, and so $G$ contains no even wheel. Note that $G$ is non-$3$-partite. For any $G^*\in \mathrm{Ex}_4(n,\mathscr{W}_{\mathrm{even}}),$ we have
\begin{eqnarray}\label{eq38}
e(G^*)\geq e(G)=e(T_{n,3})-\left(n-\left\lceil\frac n3\right\rceil\right)+2.
\end{eqnarray}
Since $G^*\in \mathrm{Ex}_4(n,\mathscr{W}_{\mathrm{even}}),$ $G^*$ is a non-$3$-partite $W_6$-free graph. Applying Theorem \ref{main1} with $k=2$, we deduce that
\begin{eqnarray}\label{eq39}
e(G^*)\leq e(T_{n,3})-\left(n-\left\lceil\frac n3\right\rceil\right)+2
\end{eqnarray}
with equality if and only if $G^*\in\mathscr{G}(n)$. Combining \eqref{eq38} and \eqref{eq39}, we have
\begin{eqnarray*}
e(G^*)= e(T_{n,3})-\left(n-\left\lceil\frac n3\right\rceil\right)+2,
\end{eqnarray*}
and hence $\mathrm{ex}_4(n,\mathscr{W}_{\mathrm{even}})= e(T_{n,3})-\left(n-\big\lceil\frac n3\big\rceil\right)+2.$ Moreover, equality holds in \eqref{eq39}, and hence $G^*\in \mathscr{G}(n),$ which implies that $\mathrm{Ex}_4(n,\mathscr{W}_{\mathrm{even}})\subseteq \mathscr{G}(n).$
Conversely, every graph in $\mathscr{G}(n)$ is a non-$3$-partite $\mathscr{W}_{\mathrm{even}}$-free graph and has $\mathrm{ex}_4(n,\mathscr{W}_{\mathrm{even}})$ edges. Then $\mathscr{G}(n) \subseteq \mathrm{Ex}_4(n,\mathscr{W}_{\mathrm{even}}).$
Hence $\mathrm{Ex}_4(n,\mathscr{W}_{\mathrm{even}})=\mathscr{G}(n).$
\end{proof}

In view of Theorems \ref{main1} and \ref{main2}, we not only solve the Tur\'{a}n problem for a fixed forbidden even wheel $W_{2k+2}$ in non-$3$-partite graphs, but also settle the stronger Tur\'{a}n-type problem in which all even wheels are forbidden simultaneously.

Finally, set $N(k,\varepsilon)=\max\left\{n_1(W_{2k+2}), \Big\lceil\frac{4}{3\delta}\Big\rceil, \Big\lceil\frac{3}{8\varepsilon}\Big\rceil\right\},$ where $\delta$ and $n_1(W_{2k+2})$ are the constants provided by Lemma~\ref{lem2.1} for the chosen $\varepsilon$. One can see from the proof of Theorems \ref{main1} and \ref{main2} that the results in these two theorems hold as long as $n\geq N(k,\varepsilon).$

\vspace{5mm}
\noindent
{\bf Declaration of competing interest}
\vspace{3mm}

The authors declare that they have no known competing financial interests or personal relationships that could have appeared to influence the work reported in this paper.

\vspace{5mm}
\noindent
{\bf Data availability}
\vspace{3mm}

No data was used for the research described in this paper.

\vspace{5mm}
\noindent
{\bf Acknowledgement}

\vspace{3mm}
The first author gratefully acknowledges the support of the China Scholarship Council and the hospitality of the School of Mathematics and Statistics at the University of Melbourne.

\end{document}